\newtheorem{thm}{Theorem}
\newtheorem{lem}[thm]{Lemma}
\newtheorem{den}[thm]{Definition}
\newtheorem{oss}[thm]{Remark}
\newtheorem{pro}[thm]{Proposition}
\numberwithin{thm}{section}
\numberwithin{equation}{section}
\newcommand{\R}{\mathbb{R}}
\newcommand{\LL}{{\rm L}}
\newcommand{\HH}{{\rm H}}
\title[The fractional Laplacian in power-weighted $\LL^p$ spaces]{The fractional Laplacian in power-weighted $\LL^p$ spaces: integration-by-parts formulas and self-adjointness}
\author{Matteo Muratori}
\address{Dipartimento di Matematica ``F. Casorati'', Universit\`a degli Studi di Pavia, via A.~Ferrata 5, 27100 Pavia, Italy}
\email{matteo.muratori@unipv.it}
\begin{document}

\begin{abstract}
We consider the fractional Laplacian operator $(-\Delta)^s$  (let $ s \in (0,1) $) on Euclidean space and investigate the validity of the classical integration-by-parts formula that connects the $ \LL^2(\mathbb{R}^d) $ scalar product between a function and its fractional Laplacian to the nonlocal norm of the fractional Sobolev space $ \dot{\HH}^s(\mathbb{R}^d) $. More precisely, we focus on functions belonging to some weighted $ \LL^2 $ space whose fractional Laplacian belongs to another weighted $ \LL^2 $ space: we prove and disprove the validity of the integration-by-parts formula depending on the behaviour of the weight $ \rho(x) $ at infinity. The latter is assumed to be like a power both near the origin and at infinity (the two powers being possibly different). Our results have direct consequences for the self-adjointness of the linear operator formally given by $ \rho^{-1}(-\Delta)^s $. The generality of the techniques developed allows us to deal with weighted $ \LL^p $ spaces as well.
\end{abstract}

\maketitle

\section{Introduction}\label{sec: intro}
Given $ d \in \mathbb{N} $ and any $ s \in (0,1) $, the fractional Laplacian $ (-\Delta)^s $ in $ \mathbb{R}^d $ is a nonlocal operator defined on test functions by
\begin{equation*}\label{eq: frac-test}
(-\Delta)^s(\phi)(x) := C_{d,s} \ p.v. \int_{\mathbb{R}^d} \frac{\phi(x)-\phi(y)}{|x-y|^{d+2s}} \, \mathrm{d}y \quad \forall x \in \mathbb{R}^d \, , \ \, \forall \phi \in \mathcal{D}(\mathbb{R}^d) \, ,
\end{equation*}
where $p.v.$ denotes the \emph{principal value} of the integral about $x$ and $ C_{d,s} $ is a suitable positive constant depending only on $d$ and $s$, such that $ \lim_{s \to 1^-} (-\Delta)^s(\phi) = -\Delta \phi $ (see for instance \cite[Sections 3, 4]{NPV}). An alternative representation of $ (-\Delta)^s $ is the one involving the celebrated extension of Caffarelli and Silvestre \cite{CS}, where the fractional Laplacian of $\phi$ is seen as the trace of the normal derivative of the harmonic extension of $ \phi $ in the upper half-plane (at least for $ s=\tfrac12 $, while for a general $ s \in (0,1) $ one has to introduce a suitable degenerate or singular elliptic operator). Even though it has proved to be a very powerful tool in dealing with issues related to the fractional Laplacian, we shall no further consider the aforementioned extension, since our arguments need not take advantage of it.

\medskip
A Sobolev space naturally associated with the fractional Laplacian is $ \dot{\HH}^s(\mathbb{R}^d) $, namely the closure of $ \mathcal{D}(\mathbb{R}^d) $ endowed with~the norm 
$$  \left\|  \phi  \right\|_{\dot{\HH}^s(\mathbb{R}^d)} := \left\| (-\Delta)^{s/2}(\phi) \right\|_{\LL^2(\mathbb{R}^d)}  \quad \forall \phi \in \mathcal{D}(\mathbb{R}^d) \, . $$  
A well-known result (see \cite[Proposition 3.6]{NPV}) asserts that
\begin{equation}\label{eq: equiv-nonloc}
\left\|  \phi  \right\|_{\dot{\HH}^s(\mathbb{R}^d)}^2 = \frac{C_{d,s}}{2} \int_{\mathbb{R}^d} \int_{\mathbb{R}^d} \frac{\left(\phi(x) - \phi(y)\right)^2}{|x-y|^{d+2s}}  \, \mathrm{d}x \mathrm{d}y \quad \forall \phi \in \mathcal{D}(\mathbb{R}^d) \, , 
\end{equation}
so that we can equivalently define $ \dot{\HH}^s(\mathbb{R}^d) $ by means of the nonlocal (squared) norm appearing in the r.h.s.~of \eqref{eq: equiv-nonloc}. Let us point out that by $ \HH^s(\mathbb{R}^d) $ one usually means the space of functions $ v \in \LL^2(\mathbb{R}^d) $ such that $ \| v \|_{\dot{\HH}^s(\mathbb{R}^d)} < \infty $, which in fact coincides with $ \LL^2(\mathbb{R}^d) \cap \dot{\HH}^s(\mathbb{R}^d) $. However, since below we shall deal with functions belonging to some \emph{weighted} $ \LL^2 $ spaces ($ \LL^p $ in general), throughout the paper we shall never make use of $ \HH^s(\mathbb{R}^d) $.

By means of classical Fourier-transform arguments (we refer again to \cite[Section 3]{NPV}), it is straightforward to show that if $ v \in \LL^2(\mathbb{R}^d) $ and $ (-\Delta)^s(v) \in \LL^2(\mathbb{R}^d) $ (to be understood in the distributional sense), then $ v \in \HH^s(\mathbb{R}^d) $. Moreover, since $ \left\| \cdot  \right\|_{\dot{\HH}^s(\mathbb{R}^d)} $ naturally induces an inner product $ \langle \cdot , \cdot \rangle_{\dot{\HH}^s(\mathbb{R}^d)} $, the following \emph{integration-by-parts} formulas hold:
\begin{equation}\label{eq: parti-intro}
\begin{aligned}
\left\langle v , w \right\rangle_{\dot{\HH}^s(\mathbb{R}^d)} = & \frac{C_{d,s}}{2} \int_{\mathbb{R}^d} \int_{\mathbb{R}^d} \frac{(v(x)-v(y))\,(w(x)-w(y))}{|x-y|^{d+2s}} \, \mathrm{d}x \mathrm{d}y \\
= & \int_{\mathbb{R}^d} (-\Delta)^{s/2}(v)(x) \, (-\Delta)^{s/2}(w)(x) \, \mathrm{d}x  \\
= & \int_{\mathbb{R}^d} v(x) \, (-\Delta)^s(w)(x) \, \mathrm{d}x = \int_{\mathbb{R}^d} (-\Delta)^s(v)(x) \, w(x) \, \mathrm{d}x 
\end{aligned}
\end{equation}
for all $ v,w \in \LL^2(\mathbb{R}^d) $ such that $ (-\Delta)^s(v),(-\Delta)^s(w) \in \LL^2(\mathbb{R}^d) $. We referred to \eqref{eq: parti-intro} as formulas for ``integration by parts'' having in mind the case $s=1$, where the second term is replaced by the $ [\LL^2(\mathbb{R}^d)]^d $ scalar product between gradients. It is worth mentioning that the last line of \eqref{eq: parti-intro} entails the \emph{self-adjointness} of the fractional Laplacian with domain $ \{ v \in \LL^2(\mathbb{R}^d) \! : \, (-\Delta)^s(v) \in \LL^2(\mathbb{R}^d) \} $. We shall resume this point shortly.

\medskip
The main purpose of the paper is to establish (or disprove) the validity of \eqref{eq: parti-intro} in a suitable \emph{weighted} framework. More precisely, let $ \rho(x) $ be a weight (\emph{i.e.}~a nonnegative, measurable function) in $ \mathbb{R}^d $ such that 
\begin{equation}\label{eq: rho-two-powers-intro}
c \, |x|^{-\gamma_0} \le \rho(x) \le C \, |x|^{-\gamma_0} \ \ \textrm{a.e.\ in } B_1 \quad \textrm{and} \quad  c \, |x|^{-\gamma} \le \rho(x) \le C \, |x|^{-\gamma} \ \ \textrm{a.e.\ in } B_1^c
\end{equation}  
for some positive constants $ c<C $ and exponents $ \gamma_0,\gamma \in \R^+ $, where $ B_1 $ denotes the ball of radius one centred at the origin. In other words, we assume that $ \rho(x) $ behaves like a \emph{nonpositive power} both near the origin and at infinity, the two powers being possibly different. We focus on functions $ v,w \in \LL^2(\mathbb{R}^d;\rho(x)\mathrm{d}x) $ such that $ (-\Delta)^s(v),(-\Delta)^s(w) \in \LL^2(\mathbb{R}^d;[\rho(x)]^{-1}\mathrm{d}x) $. Note that, within such class of functions, at least the last line of \eqref{eq: parti-intro} makes sense. In fact we shall prove that \eqref{eq: parti-intro} does hold provided $ \gamma_0 \in [0,d) $ and $ \gamma \in [0,2s] $. The power $ \gamma=2s $ is referred to as \emph{critical} since it is precisely the one corresponding to the \emph{scaling} of the fractional Laplacian, see Section \ref{sect: tools-Lf} below and in particular Lemmas \ref{lem:decay-lap-cutoff} and \ref{lem: stime-termini-cutoff-2}. As we work with weighted Lebesgue spaces, establishing the validity of \eqref{eq: parti-intro} is not a trivial task since we cannot exploit direct Fourier-transform techniques. Indeed our methods of proof will only make use of regularisation-by-mollification and cut-off arguments. In this regard, we devote Section \ref{subsec: density-weight-leb} to prove a result that may also have an independent interest, namely the fact that one can approximate functions in the power-weighted Lebesgue spaces above by means of \emph{standard mollifications} (Theorem \ref{lem: lemma-moll}). This is important to our ends since the mollification operator commutes with translation-invariant operators such as the fractional Laplacian, so that, for instance, a function $ v \in \LL^2(\mathbb{R}^d;\rho(x)\mathrm{d}x) $ with $ (-\Delta)^s(v) \in \LL^2(\mathbb{R}^d;[\rho(x)]^{-1}\mathrm{d}x) $ can be approximated alongside its fractional Laplacian by means of its mollifications (see Proposition \ref{lem: densita-s}). Hence, we start from the validity of \eqref{eq: parti-intro} in $ \mathcal{D}(\mathbb{R}^d) $, mollify $v$ and $w$, cut them off and let the cut-off parameter tend to infinity: in order to make remainder terms vanish, it is essential that $\gamma \le 2s$, \emph{i.e.}~that the power of $ \rho(x) $ at infinity is \emph{subcritical} or at most critical.

Somewhat surprisingly, at least in the case $ d>2s $, we are able to extend the validity of the integration-by-parts formulas to any $ \gamma \in (2s,d] $ as well (that is, to some \emph{supercritical} $\gamma$). However, since cut-off techniques fail, we have to proceed by means of completely different arguments. More precisely, we shall prove that under our assumptions $ v $ and $ w $ coincide with their \emph{Riesz potentials}, namely $ v = \mathsf{I}_{d,s} \ast (-\Delta)^s(v) $ and $ w = \mathsf{I}_{d,s} \ast (-\Delta)^s(w) $, where $ \mathsf{I}_{d,s} $ is the \emph{Riesz kernel} or \emph{Green function} of the fractional Laplacian in $ \mathbb{R}^d $ (see the beginning of Section \ref{sec: supercritical} below and the monograph \cite{Lkof} as a general reference). Loosely speaking, this means that they have much better integrability properties than expected, which is crucial.

Our results can actually be generalised to any $ p \in [2,\infty) $ for $ d \le 2s $ and to any $ p \in [2,2d/(d-2s)] $ for $ d>2s $. Accordingly, the critical power $\gamma=2s$ must be replaced by $ \gamma= d-\tfrac{p}{2}(d-2s) $. The precise statements and introduction of the underlying functional setting are given in Sections \ref{subsec: not}--\ref{subsec: stat}. We preferred to prove the subcritical and supercritical cases separately, since as explained above the techniques are rather different. 

Nevertheless, the case $ p=2 $ is by itself interesting. Indeed, the validity of \eqref{eq: parti-intro} for all $ v,w \in \LL^2(\mathbb{R}^d;\rho(x)\mathrm{d}x) $ such that $ (-\Delta)^s(v),(-\Delta)^s(w) \in \LL^2(\mathbb{R}^d;[\rho(x)]^{-1}\mathrm{d}x) $ is equivalent to the self-adjointness of the linear operator formally given by $ \rho^{-1}(-\Delta)^s $ in $ \LL^2(\mathbb{R}^d;\rho(x)\mathrm{d}x) $. As a consequence, this operator generates a continuous semigroup in $ \LL^2(\mathbb{R}^d;\rho(x)\mathrm{d}x) $, so that the Cauchy problem for the \emph{weighted, fractional heat-type} equation 
\begin{equation}\label{eq: frac-heat}
\begin{cases}
\rho(x) u_t=-(-\Delta)^s(u) & \textrm{in } \mathbb{R}^d \times \mathbb{R}^+ \, , \\
u(x,0)=u_0(x) \in \LL^2(\mathbb{R}^d;\rho(x)\mathrm{d}x) & \textrm{in } \mathbb{R}^d \, ,
\end{cases}
\end{equation}
is well posed. In addition, such semigroup turns out to be Markov and can therefore be extended in a consistent way to a contraction semigroup in $ \LL^p(\mathbb{R}^d;\rho(x)\mathrm{d}x) $ for all $ p \in [1,\infty] $, the latter being analytic for $ p \in (1,\infty) $. The precise statement is provided by Theorem \ref{thm: global-self-adj}.

We finally prove that, still under the assumption $ d>2s $, formulas \eqref{eq: parti-intro} \emph{fail} as soon as $ \gamma>d $ (see Theorem \ref{thm: global}). In particular, we deduce that in this case the operator $ \rho^{-1}(-\Delta)^s $ is \emph{not} self-adjoint in $ \LL^2(\mathbb{R}^d;\rho(x)\mathrm{d}x) $. This is due to the presence of nontrivial constants, since $ \rho \in \LL^1(\mathbb{R}^d) $. Hence, the only set of parameters left undetermined is $ d \le 2s $ and $ \gamma > d-\tfrac{p}{2}(d-2s)$, that is $ d=1 $, $ s \in [1/2,1) $ and $ \gamma > 1+\tfrac{p}{2}(2s-1) $. There our techniques prevent us from establishing whether or not \eqref{eq: parti-intro} holds (see Remark \ref{oss-d-leq-2s}). 

\medskip

The major motivation for investigating the validity of \eqref{eq: parti-intro} in weighted Lebesgue spaces came from \cite{GMP}, a recent paper in which the author and collaborators studied the weighted, \emph{fractional porous medium equation} with initial \emph{measure data}, that is
\begin{equation}\label{eq: frac-pme}
\begin{cases}
\rho(x) u_t=-(-\Delta)^s \! \left( u^m \right) & \textrm{in } \mathbb{R}^d \times \mathbb{R}^+ \, , \\
\rho(x)u(x,0)=\mu & \textrm{in } \mathbb{R}^d \, ,
\end{cases}
\end{equation}
where $ m>1 $ and $ \mu $ is a positive, finite Radon measure on $ \mathbb{R}^d $. In particular, uniqueness is established by suitably adapting a ``duality method'' originally developed in \cite{Pi}, which basically consists in proving that the equation solved by the difference of the Riesz potentials of two possibly different solutions admits zero as its unique solution. In order to apply such method, it is essential to justify rigorously some integration by parts that involve functions belonging to $ \LL^2(\mathbb{R}^d;\rho(x)\mathrm{d}x) $ whose fractional Laplacian belongs to $ \LL^2(\mathbb{R}^d;[\rho(x)]^{-1}\mathrm{d}x) $ (actually in low dimensions one has to cope with analogous issues in weighted $ \LL^p $ spaces, for some $p>2$). Moreover, the well-posedness of \eqref{eq: frac-heat}, which is related to the \emph{dual problem}, is also crucial. The interest in taking measures as initial data comes in turn from the analysis of the asymptotic behaviour of general solutions, see \cite{GMP-asym}. 
 
\medskip 
 
The study of nonlinear diffusion equations involving fractional Laplacians has received an increasing amount of interest recently. In \cite{DQRV1,DQRV2} the authors investigate the fractional porous medium equation (PME from here on) in $ \mathbb{R}^d $ with $ \LL^1(\mathbb{R}^d) $ initial data. A thorough asymptotic analysis is then carried out in \cite{VazBar}. Delicate \emph{a priori} estimates, both from above and below, are the main concern of \cite{BV-frac}. As for the weighted, fractional PME \eqref{eq: frac-pme}, a first well-posedness analysis (for more regular data) is performed by \cite{PT}. Fractional diffusions of porous medium type on \emph{bounded domains} are deeply analysed in \cite{BSV,BV-aprio}. We refer to \cite{VazRec} for an excellent overview of the state of the art in the theory of nonlinear fractional diffusion.
 
Weighted \emph{local} nonlinear diffusions have also been investigated in the last few years. In the series of papers \cite{RV1,RV2,RVK} the authors study the so-called \emph{inhomogeneous} PME in Euclidean space, namely \eqref{eq: frac-pme} with $ s=1 $. They consider regular weights (or \emph{densities}) such that $ \rho(x) \approx |x|^{-\gamma} $ for some $ \gamma>0 $ as $ |x|\to\infty $. It is remarkable that the asymptotics of solutions changes considerably depending on whether $ \gamma $ is lower or greater than the critical value $ \gamma=2 $ (which corresponds to the natural scaling of the Laplacian). The critical case is then addressed in \cite{NR}. A further analysis is carried out by \cite{IS-pme}, where $ \rho(x)=|x|^{-2} $ for all $ x \in \mathbb{R}^d $. For a general theory of weighted PME's we refer the reader \emph{e.g.}~to \cite{GM12,GMPo12}.

\medskip

In the \emph{linear} context, besides the classical reference \cite{Bala}, there are some relatively recent works involving both fractional heat-type equations and weighted, local parabolic equations in $\mathbb{R}^d$. In \cite{BPSV} the authors focus on uniqueness issues for \eqref{eq: frac-heat} with $ \rho \equiv 1 $: they look for a sharp class of positive solutions that can be written as the convolution between their initial datum and the heat kernel. The paper \cite{IS} is the linear counterpart of \cite{IS-pme}. In \cite{EKP} a general weighted, second-order parabolic problem is studied: the density $ \rho $ can depend on time as well, and uniqueness results are discussed as the behaviour of $ \rho(x,t) $ as $ |x|\to\infty $ varies. 

In dimension one, the celebrated paper \cite{Feller1} provides an exhaustive analysis of second-order parabolic equations, possibly degenerate or singular at the boundary, by exploiting the (spectral) theory of one-parameter semigroups due to Hille and Yoshida.
With no claim for completeness, we also quote the more recent works \cite{FGGR,FGGR-2}, where
semigroups generated by linear and quasilinear one-dimensional, weighted, elliptic operators with quite general boundary conditions 
are investigated, and \cite{FMPS-one}, where the authors study a one-dimensional, elliptic operator degenerating of first order at the boundary.
In several space dimensions some degenerate, elliptic operators on domains (with homogeneous Dirichlet boundary conditions) are considered by \cite{FMPP,FMPS}. More precisely, in \cite{FMPP} the authors deal with a weight proportional to the distance to the boundary.
In \cite{FMPS} a thorough analysis of elliptic operators (and of the associated semigroups) whose diffusion coefficients degenerate linearly at the boundary only in tangential directions is performed. In \cite{FGGR-3} similar operators with nonhomogeneous boundary conditions are analysed.

\medskip

The connection between fractional Laplacians and symmetric, $2s$-stable \emph{L\'evy processes} is by now a well-established issue. In this regard, we refer the reader to the nice survey \cite{Val}, where such connection is made apparent by resorting to simple Random walks with jumps. 

The probabilistic interpretation of the fractional Laplacian, and of similar nonlocal diffusion operators, has successfully been exploited to give pointwise bounds on the corresponding heat kernel (\emph{i.e.}~the transition density function of the underlying L\'evy process). 
In \cite{BBCK} the authors study a quite general class of Markov processes of pure jump type, with Dirichlet forms that extend \eqref{eq: equiv-nonloc} and allow for anisotropic processes, obtaining lower and upper bounds for the associated heat kernels. 
By means of real-analytic arguments, two-sided sharp estimates of the heat kernel are given also in \cite{Bog-Grz-2}, where isotropic, unimodal L\'evy processes are considered. In \cite{Chen}, by taking advantage of probabilistic methods, the authors provide two-sided sharp estimates for the heat kernel of the (regional) fractional Laplacian 
in $C^{1,1}$ domains. Less regular domains are dealt with by \cite{Bog-Grz}. 

Let us remark that in the present paper we make no use of stochastic representations for $ \rho^{-1}(-\Delta)^s $. In principle, it is not even clear whether it is possible to associate such an operator with some L\'evy-type process (the role of the weight appears to be non trivial). Nevertheless, the purely analytic problem of providing suitable two-sided bounds for its heat kernel (in terms of $ d $, $s$, $\gamma_0$, $ \gamma $) is also left open.

\subsection{Notations and basic definitions}\label{subsec: not}
For any {nonnegative, nontrivial measurable} function $ \rho $ and measurable set $ \Omega \subset \mathbb{R}^d $, we denote by $ \LL_{\rho}^p(\Omega) $ (let $ p \in [1,\infty) $) the Lebesgue space of all measurable functions $f$ such that
$$  \left\| f \right\|_{\LL_{\rho}^p(\Omega)}^p := \int_{\mathbb{R}^d} \left| f(x) \right|^p \rho(x) \mathrm{d}x < \infty \, . $$
If $ \Omega=\mathbb{R}^d $ we set $ \| f \|_{p,\rho} := \| f \|_{\LL_{\rho}^p(\mathbb{R}^d)} $. Moreover, in the special case of power weights, namely $ \rho(x)=|x|^\lambda $ for some $ \lambda \in \mathbb{R} $, we set $ \LL_{\lambda}^p(\Omega) := \LL_{|x|^{\lambda}}^p(\Omega) $ and $ \| f \|_{p,\lambda} := \| f \|_{p,|x|^\lambda}$. In the non-weighted case $ \lambda=0 $ we use the standard notations $ \LL^p(\Omega) := \LL_0^p(\Omega) $ and $ \| f \|_{p} := \| f \|_{p,0} $.

In the sequel we shall mostly choose $ \Omega = B_r(x_0) $, that is the Euclidean ball of radius $r>0$ centred at $x_0 \in \mathbb{R}^d $, or its complement $ B_r^c(x_0) $. To simplify notation, we adopt the convention $ B_r:=B_r(0) $. 

For weights satisfying appropriate assumptions, we provide a functional space that will be very useful in the sequel.
\begin{den}\label{den: spazio-Xs}
Let $ p \in (1,\infty) $ with $ p^\prime := \tfrac{p}{p-1} $. Suppose that $ \rho $ satisfies \eqref{eq: rho-two-powers-intro} for some $ \gamma_0 \in [0,d) $ and $ \gamma \in [0,d+ps] $. We denote by $X_{p,s,\rho}$ the space of all functions $ v \in \LL^p_{\rho}(\mathbb{R}^d) $ such that $ (-\Delta)^s (v) $ (as a distribution) belongs to $ \LL^{p^\prime}_{\rho^\prime}(\mathbb{R}^d)$, where $ \rho^\prime := \rho^{-(p^\prime-1)} $. 

In the special case $ p=2 $, we set $ X_{s,\rho} := X_{2,s,\rho}$.
\end{den}
According to the above definition, a function $ v \in \LL^p_{\rho}(\mathbb{R}^d)$ belongs to $ X_{p,s,\rho} $ if and only if there exists an element $ \mathcal{V} \in \LL^{p^\prime}_{\rho^\prime}(\mathbb{R}^d) $ such that 
\begin{equation}\label{eq: s-distrib-in-X}
\int_{\mathbb{R}^d} v(x) \, (-\Delta)^s (\phi)(x)  \, \mathrm{d}x = \int_{\mathbb{R}^d} \mathcal{V}(x) \, \phi(x) \, \mathrm{d}x  \ \ \ \forall \phi \in \mathcal{D}(\mathbb{R}^d) \, .
\end{equation} 
We stress that the assumptions $ \gamma_0 \in [0,d) $ and $ \gamma \in [0,d+ps] $ ensure, in particular, that both the left- and the right-hand side of \eqref{eq: s-distrib-in-X} are in fact distributions. 

\subsection{Statement of the main results}\label{subsec: stat}

Our most important contribution to the validity, or otherwise, of the integration-by-parts formulas \eqref{eq: parti-intro} is the following.
\begin{thm} \label{thm: global}
Let either $ d \le 2s $ and $ p \in [2,\infty) $ or $ d>2s $ and $ p \in [ 2, 2d/(d-2s) ] $. Suppose that $ \rho $ satisfies \eqref{eq: rho-two-powers-intro} for some $ \gamma_0 \in [0,d) $ and $ \gamma \in \left[0, d \vee \left( d - \tfrac{p}{2}(d-2s) \right) \right] $. Then formulas \eqref{eq: parti-intro} hold for all $ v,w \in X_{p,s,\rho} $. 

On the other hand, if $ d>2s $ and $ \gamma \in (d,d+ps] $ then formulas \eqref{eq: parti-intro} fail in $ X_{p,s,\rho} $. 
\end{thm}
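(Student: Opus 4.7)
The plan is to establish the positive and negative parts separately, and within the positive part to split the admissible range of $\gamma$ into a subcritical one, $\gamma \le d - \tfrac{p}{2}(d-2s)$, and a supercritical one, $\gamma \in \bigl(d - \tfrac{p}{2}(d-2s),\,d\bigr]$, which is non-empty only when $d>2s$. The two regimes require completely different strategies, as the introduction already suggests.

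In the subcritical case I would start from the identity \eqref{eq: parti-intro} applied to $\phi,\psi\in\mathcal{D}(\mathbb{R}^d)$, where it is a classical consequence of the Fourier representation, and extend it in two steps. First, given $v,w\in X_{p,s,\rho}$, approximate them by their standard mollifications $v_\e,w_\e$: Theorem \ref{lem: lemma-moll} grants convergence in $\LL^p_{\rho}(\mathbb{R}^d)$ and, since the mollifier commutes with $(-\Delta)^s$, Proposition \ref{lem: densita-s} yields the simultaneous convergence $(-\Delta)^s(v_\e)\to(-\Delta)^s(v)$ in $\LL^{p'}_{\rho'}(\mathbb{R}^d)$. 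This reduces the problem to smooth $v,w$. Next, since these are not compactly supported, multiply them by a radial cut-off $\chi_R(x)=\chi(x/R)$, apply \eqref{eq: parti-intro} to $\chi_R v$ and $\chi_R w$, and let $R\to\infty$. The resulting commutator terms involve $v\,w\,(-\Delta)^s(\chi_R)$ and weighted double integrals carrying an extra factor $\chi_R(x)-\chi_R(y)$; by Lemmas \ref{lem:decay-lap-cutoff} and \ref{lem: stime-termini-cutoff-2}, the relevant kernel decays like $R^{-2s}$ on $B_R$ and like $R^{2s}|x|^{-d-2s}$ outside. A H\"older inequality in the duality pair $\LL^p_{\rho}\times\LL^{p'}_{\rho'}$ then forces these remainders to vanish precisely when $\gamma\le d-\tfrac{p}{2}(d-2s)$; this is where the critical power enters.

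For the supercritical range cut-offs are useless, and I would adopt the Riesz-potential method already advertised in the introduction. The first key step is to prove that every $v\in X_{p,s,\rho}$ satisfies $v = \mathsf{I}_{d,s}\ast(-\Delta)^s(v)$. Once this representation is in hand, the decay of the Riesz kernel combined with $(-\Delta)^s(v)\in\LL^{p'}_{\rho'}(\mathbb{R}^d)$ and a weighted Hardy--Littlewood--Sobolev-type estimate imply that $v$ possesses much better pointwise and integrability properties than mere membership in $\LL^p_{\rho}$ would suggest. The last equality in \eqref{eq: parti-intro} then collapses to a symmetric application of Fubini,
\[
\int_{\mathbb{R}^d} v\,(-\Delta)^s(w)\,\mathrm{d}x \,=\, \iint_{\mathbb{R}^d\times\mathbb{R}^d} \mathsf{I}_{d,s}(x-y)\,(-\Delta)^s(v)(y)\,(-\Delta)^s(w)(x)\,\mathrm{d}y\,\mathrm{d}x \,=\, \int_{\mathbb{R}^d} (-\Delta)^s(v)\,w\,\mathrm{d}x,
\]
and the remaining equalities follow analogously, e.g.~via the composition identity $\mathsf{I}_{d,s}\ast\mathsf{I}_{d,s} = \mathsf{I}_{d,2s}$ together with a further mollification argument applied inside the Riesz representation. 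The main obstacle here, and what I expect to be technically the hardest part, is precisely the proof of the representation formula: it requires ruling out a nontrivial $s$-harmonic (hence polynomial) contribution to $v$, something that is true in the admissible range because any such component cannot belong to $\LL^p_{\rho}(\mathbb{R}^d)$ when $\gamma\le d$.

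For the negative part, assume $d>2s$ and $\gamma\in(d,d+ps]$. The hypothesis on $\rho$ gives $\rho\in\LL^1(\mathbb{R}^d)$, hence the constant function $v\equiv 1$ belongs to $\LL^p_{\rho}(\mathbb{R}^d)$, while $(-\Delta)^s(1)=0\in\LL^{p'}_{\rho'}(\mathbb{R}^d)$; thus $v\in X_{p,s,\rho}$. Now fix any $f\in\mathcal{D}(\mathbb{R}^d)$ with $\int_{\mathbb{R}^d} f\ne 0$ and set $w:=\mathsf{I}_{d,s}\ast f$. Then $(-\Delta)^s(w)=f\in\mathcal{D}(\mathbb{R}^d)\subset\LL^{p'}_{\rho'}(\mathbb{R}^d)$, while the standard pointwise bound $|w(x)|\lesssim (1+|x|)^{-(d-2s)}$ combined with $\gamma>d>d-p(d-2s)$ yields $w\in\LL^p_{\rho}(\mathbb{R}^d)$, so $w\in X_{p,s,\rho}$. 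If \eqref{eq: parti-intro} held on this pair its last equality would force
\[
0\neq\int_{\mathbb{R}^d} f\,\mathrm{d}x \,=\, \int_{\mathbb{R}^d} v\,(-\Delta)^s(w)\,\mathrm{d}x \,=\, \int_{\mathbb{R}^d} (-\Delta)^s(v)\,w\,\mathrm{d}x \,=\, 0,
\]
a contradiction. Hence the formulas fail in $X_{p,s,\rho}$.
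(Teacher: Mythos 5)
Your overall architecture matches the paper's: mollification (Theorem \ref{lem: lemma-moll}, Proposition \ref{lem: densita-s}) plus cut-off with remainder estimates for $\gamma\le d-\tfrac{p}{2}(d-2s)$; Riesz potentials for the supercritical range; and the pair $(1,\mathsf{I}_{d,s}\ast f)$ for the counterexample, which is exactly the paper's argument. In the subcritical part, be aware that the paper needs an intermediate a priori step (Lemma \ref{lem: iniezione-continua}): the mixed remainder terms carrying a factor $(v(x)-v(y))(\xi_R(x)-\xi_R(y))$ are controlled by $[\mathcal{T}]^{1/2}[I_R]^{1/2}$, so one must first absorb $I_R(v_n)$ via Young's inequality to obtain the embedding $X_{p,s,\rho}\hookrightarrow\dot{\HH}^s(\mathbb{R}^d)$ before the limit $R\to\infty$ can be taken in the bilinear identity; your sketch elides this, but it is a refinement of the same method rather than a different one.

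The genuine gap is in the supercritical positive case. First, your route to $v=\mathsf{I}_{d,s}\ast(-\Delta)^s(v)$ via a Liouville theorem for distributional $s$-harmonic functions is a legitimate alternative to the paper's direct computation (Lemma \ref{lem: coinc-pot}, which tests against $\xi_R(\mathsf{I}_{d,s}\ast\phi)$ and kills the commutators), but it imports a nontrivial external result that you would have to state and justify in the relevant function class. Second, and more seriously, your Fubini computation only delivers the \emph{last} line of \eqref{eq: parti-intro} (the symmetry of $\mathcal{B}$); the theorem also asserts the first two lines, i.e.\ that both quantities equal $\|\cdot\|_{\dot{\HH}^s}$-type expressions, and this is where the real work lies. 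Your proposed shortcut via the composition identity $\mathsf{I}_{d,s}\ast\mathsf{I}_{d,s}=\mathsf{I}_{d,2s}$ fails on a large part of the admissible range: that convolution diverges at infinity unless $4s<d$, since the integrand decays like $|y|^{-2(d-2s)}$, so the identity is unavailable for $2s<d\le 4s$ (e.g.\ $d=1,2,3$ with $s$ close to $1$), which the theorem covers. The paper instead proves
\begin{equation*}
\int_{\mathbb{R}^d}\left|(-\Delta)^{s/2}(\mathsf{I}_{d,s}\ast f)(x)\right|^2\mathrm{d}x=\int_{\mathbb{R}^d}(\mathsf{I}_{d,s}\ast f)(x)\,f(x)\,\mathrm{d}x
\end{equation*}
by approximating $\mathsf{I}_{d,s}$ with the Green function of the spectral fractional Laplacian on $B_R$, passing to the limit $R\to\infty$ by monotonicity and weak $\LL^2$ convergence, mollifying $f$, and then polarizing with $f=(-\Delta)^s(v)\pm(-\Delta)^s(w)$; some replacement for this step is needed, and your proposal does not supply one that works throughout the stated range.
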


As mentioned above, Theorem \ref{thm: global} entails some crucial consequences concerning the self-adjointness of the operator $ \rho^{-1}(-\Delta)^s $.

\begin{thm}\label{thm: global-self-adj}
Suppose that $ \rho $ satisfies \eqref{eq: rho-two-powers-intro} for some $ \gamma_0 \in [0,d) $ and $ \gamma \in [0,d \vee 2s] $. Let us define the linear operator $A: D(A) := X_{s,\rho} \subset \LL^2_{\rho}(\mathbb{R}^d) \mapsto \LL^2_{\rho}(\mathbb{R}^d)$ as follows:
\begin{equation*}\label{eq: def-operator-A}
A(f) := \rho^{-1} (-\Delta)^s(f) \quad \forall f \in D(A) \, .
\end{equation*}
Then $A$ is a densely defined, nonnegative self-adjoint operator in $\LL^2_{\rho}(\mathbb{R}^d)$, whose associated quadratic form is
\begin{equation*}
Q(v,v):= \| v \|_{\dot{\HH}^s(\mathbb{R}^d)}^2 = \frac{C_{d,s}}{2} \int_{\mathbb{R}^d} \int_{\mathbb{R}^d} \frac{(v(x)-v(y))^2}{|x-y|^{d+2s}} \, \mathrm{d}x \mathrm{d}y \, ,
\end{equation*}
with domain $ D(Q)=\LL^2_{\rho}(\mathbb{R}^d) \cap \dot{\HH}^s(\mathbb{R}^d)$. Moreover, $Q$ is a Dirichlet form and $ A $ generates a Markov semigroup $S_2(t)$ on $ \LL^2_{\rho}(\mathbb{R}^d)$. In particular, for all $p\in[1,\infty]$ there exists a contraction semigroup $S_p(t)$ on $ \LL^p_{\rho}(\mathbb{R}^d)$, consistent with $S_2(t)$ on $ \LL^2_{\rho}(\mathbb{R}^d)\cap \LL^p_{\rho}(\mathbb{R}^d)$, which is furthermore analytic with a suitable angle $\theta_p>0$ for all $p\in(1,\infty)$.

In the case $d>2s$ and $ \gamma \in (d,d+2s] $, the operator $A$ is no more self-adjoint in $ \LL^2_\rho(\mathbb{R}^d) $.
\end{thm}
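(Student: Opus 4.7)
The plan is to derive every assertion from the integration-by-parts formulas of Theorem \ref{thm: global} (in the case $p=2$), combined with the density/mollification results of Section \ref{subsec: density-weight-leb} and the classical theory of Dirichlet forms and symmetric Markov semigroups.

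First, I verify that $A$ is densely defined, symmetric and nonnegative. Since $\rho$ is locally integrable under the hypothesis $\gamma_0<d$, $\mathcal{D}(\mathbb{R}^d)$ is dense in $\LL^2_\rho(\mathbb{R}^d)$, and for every $\phi\in\mathcal{D}(\mathbb{R}^d)$ the standard decay $(-\Delta)^s(\phi)(x)=O(|x|^{-d-2s})$ at infinity, combined with $\gamma\le d\vee 2s$ and $\gamma_0<d$, yields $(-\Delta)^s(\phi)\in \LL^2_{\rho^{-1}}(\mathbb{R}^d)$; hence $\mathcal{D}(\mathbb{R}^d)\subset X_{s,\rho}=D(A)$ and $D(A)$ is dense. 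For $v,w\in X_{s,\rho}$ the last line of \eqref{eq: parti-intro}, granted by Theorem \ref{thm: global}, gives
\begin{equation*}
\langle Av,w\rangle_{\LL^2_\rho}=\int_{\mathbb{R}^d}(-\Delta)^s(v)\,w\,\mathrm{d}x=\int_{\mathbb{R}^d}v\,(-\Delta)^s(w)\,\mathrm{d}x=\langle v,Aw\rangle_{\LL^2_\rho}\,,
\end{equation*}
which is symmetry; specialising $w=v$ and using the second line of \eqref{eq: parti-intro} gives $\langle Av,v\rangle_{\LL^2_\rho}=\|v\|_{\dot{\HH}^s(\mathbb{R}^d)}^2\ge 0$.

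To promote symmetry to self-adjointness I inspect $D(A^*)$ directly. Let $v\in D(A^*)$ with $A^*v=\psi\in\LL^2_\rho(\mathbb{R}^d)$. Testing the defining identity $\langle A\phi,v\rangle_{\LL^2_\rho}=\langle\phi,\psi\rangle_{\LL^2_\rho}$ against $\phi\in\mathcal{D}(\mathbb{R}^d)\subset D(A)$ yields
\begin{equation*}
\int_{\mathbb{R}^d}(-\Delta)^s(\phi)\,v\,\mathrm{d}x=\int_{\mathbb{R}^d}\phi\,(\psi\rho)\,\mathrm{d}x\qquad\forall\phi\in\mathcal{D}(\mathbb{R}^d)\,,
\end{equation*}
which is precisely the distributional identity $(-\Delta)^s v=\psi\rho$. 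Since $\|\psi\rho\|_{\LL^2_{\rho^{-1}}}=\|\psi\|_{\LL^2_\rho}<\infty$, we conclude $v\in X_{s,\rho}=D(A)$ and $Av=\psi$, so $D(A^*)=D(A)$ and $A$ is self-adjoint. For the quadratic form, the spectral theorem gives $D(Q_A)=D(A^{1/2})$ with $Q_A(v,v)=\langle Av,v\rangle_{\LL^2_\rho}=\|v\|_{\dot{\HH}^s(\mathbb{R}^d)}^2$ on $D(A)$; the identification $D(Q_A)=\LL^2_\rho(\mathbb{R}^d)\cap\dot{\HH}^s(\mathbb{R}^d)$ then reduces to showing that $D(A)$ is dense in this intersection in the graph norm $\|\cdot\|_{\LL^2_\rho}^2+\|\cdot\|_{\dot{\HH}^s}^2$, via a cut-off-plus-mollification scheme relying on Theorem \ref{lem: lemma-moll} and on the continuity of cut-off and convolution in $\dot{\HH}^s(\mathbb{R}^d)$; this is the main technical obstacle, and it genuinely uses $\gamma\le d\vee 2s$.

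The Dirichlet-form property is immediate from the double-integral expression of $Q$: if $T\colon\mathbb{R}\to\mathbb{R}$ is a normal contraction, the pointwise estimate $(T(v(x))-T(v(y)))^2\le(v(x)-v(y))^2$ yields $Q(T\circ v,T\circ v)\le Q(v,v)$, while $|T\circ v|\le|v|$ keeps $T\circ v$ inside $\LL^2_\rho(\mathbb{R}^d)$. By Beurling--Deny/Fukushima, $A$ generates a symmetric Markov semigroup $S_2(t)$ on $\LL^2_\rho(\mathbb{R}^d)$, which is contractive on $\LL^\infty_\rho$ by the Markov property and on $\LL^1_\rho$ by self-adjoint duality; Riesz--Thorin interpolation then produces consistent contraction semigroups $S_p(t)$ on $\LL^p_\rho(\mathbb{R}^d)$ for every $p\in[1,\infty]$, and Stein's analyticity theorem for symmetric Markov semigroups yields a positive analyticity angle $\theta_p$ on $\LL^p_\rho$ for $p\in(1,\infty)$. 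Finally, in the supercritical regime $d>2s$ and $\gamma\in(d,d+2s]$ the second assertion of Theorem \ref{thm: global} (with $p=2$) directly supplies $v,w\in X_{s,\rho}$ for which $\langle Av,w\rangle_{\LL^2_\rho}\ne\langle v,Aw\rangle_{\LL^2_\rho}$; this alone prevents $A$ from being symmetric and, a fortiori, from being self-adjoint.
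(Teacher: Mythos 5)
Your proposal is correct and follows essentially the same route as the paper: density and nonnegativity from $\mathcal{D}(\mathbb{R}^d)\subset X_{s,\rho}$ and Theorem \ref{thm: global}, self-adjointness by testing the defining property of $D(A^\ast)$ against $\phi\in\mathcal{D}(\mathbb{R}^d)$ to read off $(-\Delta)^s v=\psi\rho\in\LL^2_{\rho^{-1}}(\mathbb{R}^d)$, the Beurling--Deny criterion for the Markov property, the standard Davies-type theory for the $\LL^p_\rho$ semigroups, and non-symmetry (via the constant function and a Riesz potential) for the supercritical failure. The only point you leave as a "technical obstacle" --- that the form-norm closure of $D(A)$ is exactly $\LL^2_\rho(\mathbb{R}^d)\cap\dot{\HH}^s(\mathbb{R}^d)$ --- is treated at the same level of detail in the paper, which simply declares it straightforward.
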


\begin{oss}\label{oss-d-leq-2s}\rm
We point out that, upon requiring $ d>2s $, we only exclude the case $ d=1 $ with $ s \in [1/2,1) $. More precisely, in the light of Theorem \ref{thm: global}, in the set of parameters $ d=1 $, $ s \in [1/2,1) $, $ p \in [2,\infty) $ and $ 1+\tfrac{p}{2}(2s-1) < \gamma \le 1+ps $, the validity (or the failure) of \eqref{eq: parti-intro} in $ X_{p,s,\rho} $ is left as an open problem, since for such choices neither cut-off nor potential techniques work (see Sections \ref{sect: SAIP}--\ref{sec: supercritical}).
\end{oss}

\subsection{Organization of the paper}\label{subsec: op}
Section \ref{subsec: density-weight-leb} is entirely devoted to the proof of the fact that mollifications are dense in the weighted $\LL^p$ spaces we consider (Theorem \ref{lem: lemma-moll}). We then briefly show that our assumptions on the weight for such a result to hold are to some extent sharp (Remark \ref{oss: sharp-moll}). In Section \ref{sect: tools-Lf} first we collect some straightforward decay and scaling properties of fractional Laplacians of test functions (Lemmas \ref{lem:decay-lap-1}--\ref{lem: stime-termini-cutoff-2}), then we establish some fundamental intermediate steps, involving the space $ X_{p,s,\rho} $, which are essential to prove the integration-by-parts formulas (Proposition \ref{lem: densita-s} and Lemmas \ref{lem: laplaciano-classico-laplaciano-distrib}, \ref{lem: stime-integrali-cutoff}). The proof of Theorems \ref{thm: global}--\ref{thm: global-self-adj} is split between Sections \ref{sect: SAIP} and \ref{sec: supercritical}. In Section \ref{sect: SAIP}, after providing a continuous-embedding result (Lemma \ref{lem: iniezione-continua}), we give the proof of the validity of the integration-by-parts formulas for subcritical-critical powers. The special case $p=2$ is discussed afterwards. Finally, Section \ref{sec: supercritical} deals with supercritical powers under the assumption $ d>2s $: by means of potential techniques, upon establishing some preliminary results (Lemmas \ref{lem: decay-pot}--\ref{lem: coinc-pot}), we prove and disprove the validity of the integration-by-parts formulas.

\section{Density of mollifications in power-weighted $ \LL^p $ spaces}\label{subsec: density-weight-leb}
In the following we shall assume that $ \rho $ is a weight that behaves like a power, not necessarily negative, both near the origin and at infinity, namely that 
\begin{equation}\label{eq: rho-two-powers}
c \, |x|^{\lambda} \le \rho(x) \le C \, |x|^{\lambda} \ \ \textrm{a.e.\ in } B_1 \quad \textrm{and} \quad  c \, |x|^{\Lambda} \le \rho(x) \le C \, |x|^{\Lambda} \ \ \textrm{a.e.\ in } B_1^c
\end{equation}  
for some positive constants $ c<C $ and exponents $ \lambda,\Lambda \in \R $. 

Our aim in the present section is to show that the standard mollification of a function $ f \in \LL^p_\rho(\mathbb{R}^d) $ converges to $f$ in $ \LL^p_\rho(\mathbb{R}^d) $, under suitable assumptions on $ p $ and $ \lambda $. This result will frequently be exploited through Sections \ref{sect: tools-Lf}--\ref{sec: supercritical}, but we believe it can also have an independent interest.
\begin{thm} \label{lem: lemma-moll}
Let $p \in (1,\infty) $. Suppose that \eqref{eq: rho-two-powers} holds for some $\lambda \in \left(-d, (p-1) d \right)$ and $ \Lambda \in \R $. Let $f \in \LL^p_{\rho}(\mathbb{R}^d)$ and consider the mollification 
\begin{equation}\label{eq: def-mollificazione}
f_\varepsilon(x) := \int_{\mathbb{R}^d} \eta_\varepsilon(x-y) \, f(y) \, \mathrm{d}y \ \ \ \forall x \in \mathbb{R}^d \, ,
\end{equation}
where 
$$ \eta_\varepsilon(x):= {\varepsilon^{-d}} \, \eta\left( \frac{x}{\varepsilon} \right) \quad \forall x \in \mathbb{R}^d \, , \ \, \forall \varepsilon>0 \, ,  $$
and $\eta$ is a nonnegative, regular function supported in $B_1$, such that $\int_{\mathbb{R}^d}\eta(x) \, \mathrm{d}x = 1$. Then, $f_\varepsilon \in C^\infty(\mathbb{R}^d) \cap \LL^p_{\rho}(\mathbb{R}^d)$ and
\begin{equation} \label{eq: approx-L2}
\lim_{\varepsilon \rightarrow 0} \left\| f_\varepsilon - f \right\|_{p,\rho} = 0 \, . 
\end{equation}
\begin{proof}
To simplify readability we shall only deal with the model case $ \rho(x) = |x|^{\lambda} $. Minor modifications are required to deal with a more general weight as in the statement, which will be discussed in the end of the proof. 

In order to give sense to \eqref{eq: def-mollificazione} and to prove that $f_\varepsilon \in C^\infty(\mathbb{R}^d)$, we first need to show that $f \in \LL^1_{\rm loc}(\mathbb{R}^d)$. To this end, by means of H\"{o}lder's inequality, for any $ R>0$ we get:
\begin{equation} \label{eq: stima-L1-loc} 
\int_{B_R} |f(y)| \, \mathrm{d}y \le \left( \int_{B_R} |y|^{-\frac{\lambda}{p-1}} \, \mathrm{d}y \right)^{\frac{p-1}{p}} \left( \int_{B_R} |f(y)|^p \, |y|^{\lambda} \mathrm{d}y \right)^{\frac1p} \le \frac{ \left| \mathbb{S}_{d-1} \right|^{\frac{p-1}{p}} R^{\frac{(p-1)d-\lambda}{p}} }{\left( d-\frac{\lambda}{p-1} \right)^{\frac{p-1}{p}} } \, \| f \|_{p,\lambda} \, ,
\end{equation} 
where $ \mathbb{S}_{d-1} $ is the unitary $(d-1)$-dimensional sphere. Note that the r.h.s.~of \eqref{eq: stima-L1-loc} is finite in view of the assumption $ \lambda < (p-1)d $. 

The validity of \eqref{eq: approx-L2} is actually implied by the validity of the estimate
\begin{equation} \label{eq: bound-L2}
\left\| f_\varepsilon \right\|_{p,\lambda} \le K \left\| f \right\|_{p,\lambda} \quad \forall f \in \LL^p_{\lambda}(\mathbb{R}^d)
\end{equation}
for a suitable positive constant $K$ independent of $\varepsilon$ and $f$. Indeed, once we have established \eqref{eq: bound-L2}, we can proceed in a standard way. First of all, we pick a sequence of functions $\{f_n\} $ that are compactly supported in $ \mathbb{R}^d \setminus \{ 0 \} $ such that
\begin{equation} \label{eq: approx-cont}
\lim_{n \to \infty} \left\| f_n - f \right\|_{p,\lambda} = 0 \, .
\end{equation}
This is always possible: for any given $ n \in \mathbb{N} $ one can consider the truncated functions $ f_n(x):=f(x) \, \chi_{\left\{{1}/{n} \le |x| \le n \right\}} $.
It is plain that each $ {f}_n \in  \LL^p(\mathbb{R}^d) $ is by definition compactly supported in $ \mathbb{R}^d \setminus \{ 0 \} $ and that \eqref{eq: approx-cont} holds. By standard results (see \emph{e.g.}~\cite[Chapters 2, 3]{Ada75} or \cite[Appendix C.4]{Ev98}) the mollification $ (f_n)_\varepsilon $ of $ f_n $ converges to $f_n $ in $ \LL^p(\mathbb{R}^d)$ as $ \varepsilon \to 0 $. Since $(f_n)_\varepsilon$ is eventually supported in $ B_{2n} \cap B_{{1}/{2n} }^c $ and the weight $ |x|^\lambda $ is equivalent to $1$ in such region, we deduce that
\begin{equation} \label{eq: cont-epsilon}
\lim_{\varepsilon \to 0} \left\| {(f_n)}_\varepsilon - f_n \right\|_{p,\lambda} = 0 \, .
\end{equation}
By using the triangular inequality, the linearity of the mollification operator and \eqref{eq: bound-L2}, we get:
\begin{equation} \label{eq: approx-cont-triangolare-bis}
\left\| f_\varepsilon - f \right\|_{p,\lambda} \le (K+1) \left\| f_n - f \right\|_{p,\lambda} + \left\| {(f_n)}_\varepsilon - f_n \right\|_{p,\lambda} .
\end{equation}
Thanks to \eqref{eq: approx-cont}, for any $ \delta >0 $ we can pick $n_\delta$ so large that $\left\| f_{n_\delta} - f \right\|_{p,\lambda} \le \delta$. By letting $\varepsilon \to 0$ in \eqref{eq: approx-cont-triangolare-bis} with $ n = n_\delta $ and using \eqref{eq: cont-epsilon}, we end up with
\begin{equation*} \label{eq: approx-cont-triangolare-ter}
\limsup_{\varepsilon \rightarrow 0} \left\| f_\varepsilon - f \right\|_{p,\lambda} \le (K+1) \, \delta \, ,
\end{equation*}
whence \eqref{eq: approx-L2} follows from the arbitrariness of $ \delta $.

We are therefore left with proving \eqref{eq: bound-L2}. To this aim, let us split $ \|f_\varepsilon\|_{p,\lambda}^p $ in a convenient way:
\begin{equation} \label{eq: conto-norma-L2}
\left\| f_\varepsilon \right\|_{p,\lambda}^p = \int_{B_{2\varepsilon}} \left| f_\varepsilon (x) \right|^p |x|^\lambda \mathrm{d}x + \int_{B_{2\varepsilon}^c} \left| f_\varepsilon (x) \right|^p |x|^\lambda \mathrm{d}x \, . 
\end{equation}
We shall estimate the two integrals above separately. As for the first one, we have (recall that $ \lambda > -d $):
\begin{equation} \label{eq: conto-norma-L2-a}
\int_{B_{2\varepsilon}} \left|f_\varepsilon(x)\right|^p |x|^\lambda \mathrm{d}x \le  \frac{2^{d+\lambda} \, \varepsilon^{d+\lambda} \left| \mathbb{S}_{d-1} \right|}{d+\lambda} \left\| f_\varepsilon \right\|_{\LL^\infty(B_{2\varepsilon})}^p , 
\end{equation}
where, by virtue of \eqref{eq: def-mollificazione} and \eqref{eq: stima-L1-loc} (with $ R=3\varepsilon $),
\begin{equation} \label{eq: conto-norma-L2-b}
\left\| f_\varepsilon \right\|_{\LL^\infty(B_{2\varepsilon})} \le \frac{\| \eta \|_\infty}{\varepsilon^d} \int_{B_{3\varepsilon}} |f(y)| \, \mathrm{d}y \le 
\frac{3^{\frac{(p-1)d-\lambda}{p}} \left| \mathbb{S}_{d-1} \right|^{\frac{p-1}{p}} \| \eta \|_\infty}{\varepsilon^{\frac{d+\lambda}{p}} \left( d-\frac{\lambda}{p-1} \right)^{\frac{p-1}{p}}} \, \| f \|_{p,\lambda} \, .
\end{equation}
From \eqref{eq: conto-norma-L2-a} and \eqref{eq: conto-norma-L2-b} we obtain
\begin{equation} \label{eq: conto-norma-L2-c}
\int_{B_{2\varepsilon}} \left| f_\varepsilon(x) \right|^p |x|^\lambda \mathrm{d}x \le \frac{2^{d+\lambda} \, 3^{(p-1)d-\lambda} \left| \mathbb{S}_{d-1} \right|^p \| \eta \|_\infty^p }{(d+\lambda)\left( d-\frac{\lambda}{p-1} \right)^{p-1} } \left\| f \right\|_{p,\lambda}^p .
\end{equation}
We now turn to the second integral in the r.h.s.\ of \eqref{eq: conto-norma-L2}. We have:
\begin{equation} \label{eq: conto-norma-L2-d}
\begin{aligned}
\int_{B_{2\varepsilon}^c} \left| f_\varepsilon(x) \right|^p |x|^\lambda \mathrm{d}x \le \int_{\mathbb{R}^d} \left| f(y) \right|^p \left( \int_{B_{2\varepsilon}^c} \eta_\varepsilon(x-y) \, |x|^\lambda \mathrm{d}x \right) \mathrm{d}y \, ,
\end{aligned}
\end{equation}
where we exploited H\"{o}lder's inequality, for any fixed $ x \in B_{2\varepsilon}^c $, with respect to the probability measure $ \eta_\varepsilon(x-y) \mathrm{d}y $. Thanks to \eqref{eq: conto-norma-L2-d}, it is enough to show that there exists a positive constant $K^\prime $, independent of $\varepsilon$, such that 
\begin{equation} \label{eq: stima-peso}
\int_{B_{2\varepsilon}^c} \eta_\varepsilon(x-y) \,  |x|^\lambda \mathrm{d}x  \le K^\prime \, |y|^\lambda \ \ \ \forall y \in \mathbb{R}^d \, .
\end{equation}
To this aim, first of all note that
\begin{equation} \label{eq: stima-peso-2}
\int_{B_{2\varepsilon}^c} \eta_\varepsilon(x-y) \,  |x|^\lambda \mathrm{d}x \le \| \eta \|_\infty \, \frac{\int_{B_{2\varepsilon}^c} \chi_{\{ |x-y| \le \varepsilon \}} \, |x|^\lambda \mathrm{d}x}{\varepsilon^d} \, . 
\end{equation} 
It is apparent that for $ |y| < \varepsilon $ the integral in the r.h.s.\ of \eqref{eq: stima-peso-2} is identically zero, while for $|y| > 2 \varepsilon$ we have:
\begin{equation} \label{eq: stima-peso-3}
\begin{aligned}
\frac{\int_{B_{2\varepsilon}^c} \chi_{\{ |x-y| \le \varepsilon \}} \,  |x|^\lambda \mathrm{d}x}{\varepsilon^d} \le \frac{ \int_{B_{\varepsilon}(y)} |x|^\lambda \, \mathrm{d}x}{\varepsilon^d} \le \frac{\left| \mathbb{S}_{d-1} \right|}{2^\lambda \, d} \, \max{\left\{ 3^\lambda , 1 \right\}} \, |y|^\lambda \, .
\end{aligned}
\end{equation}
Hence, it remains to estimate the r.h.s.\ of \eqref{eq: stima-peso-2} as $y$ varies in $ \overline{B}_{2\varepsilon} \setminus B_{\varepsilon} $. We point out that in such region the following inequality holds:
\begin{equation*} \label{eq: stima-peso-4}
\frac{\int_{B^c_{2\varepsilon}} \chi_{\{ |x-y| \le \varepsilon \}} \, |x|^\lambda \mathrm{d}x}{\varepsilon^d} \le \frac{\int_{ B_{|y|+\varepsilon} \setminus B_{2\varepsilon} } |x|^\lambda \, \mathrm{d}x}{\varepsilon^d} = \frac{\left| \mathbb{S}_{d-1} \right|}{d+\lambda} \, \frac{ (|y| +\varepsilon)^{d+\lambda}-2^{d+\lambda} \, \varepsilon^{d+\lambda}}{\varepsilon^d} \, .
\end{equation*}
It is then direct to see that there exists a positive constant $M$, independent of $\varepsilon$, such that
\begin{equation} \label{eq: stima-peso-5}
\frac{ (|y|+\varepsilon)^{d+\lambda}-2^{d+\lambda}\, \varepsilon^{d+\lambda}}{\varepsilon^d} \le M \, |y|^\lambda \ \ \ \forall y \in \overline{B}_{2\varepsilon} \setminus B_\varepsilon \, .
\end{equation}
Hence, by
gathering \eqref{eq: stima-peso-2}--\eqref{eq: stima-peso-5}, we can deduce that \eqref{eq: stima-peso} does hold with
$$ K^\prime = \left| \mathbb{S}_{d-1} \right| \| \eta \|_\infty \, \max{\left\{ \frac{\max{\left\{ 3^\lambda , 1 \right\}}}{2^\lambda \, d} , \frac{M}{d+\lambda} \right\}} \, ,  $$
whence inequality \eqref{eq: bound-L2} follows in view of \eqref{eq: conto-norma-L2-c} and \eqref{eq: conto-norma-L2-d}, which completes the proof.

In order to handle a weight $ \rho $ whose power-type behaviours near the origin and at infinity are different, one can split $f$ in the sum $ f=f_1+f_2 $, with $ f_1:=f \chi_{B_1} $ and $ f_2:=f \chi_{B_1^c} $. By linearity, $ f_\varepsilon = (f_1)_\varepsilon + (f_2)_\varepsilon $; it is therefore enough to show that \eqref{eq: approx-L2} holds for $f_1$ and $f_2$ separately. As concerns $ f_1 $, since the latter and its mollifications are (eventually) supported in $B_{3/2} $, one can modify $ \rho(x) $ so that it behaves like $ |x|^\lambda $ also in $B_{3/2}^c $ and then apply the first part of the proof. Similarly, because $ f_2 $ and its mollifications are eventually supported in $ B_{1/2}^c $, the validity of the analogue of \eqref{eq: bound-L2} (and so of \eqref{eq: approx-L2}) is now implied by the validity of \eqref{eq: stima-peso} in the region $ \{ |y| > 1/2 \} $, which holds for all $ \lambda = \Lambda \in \mathbb{R} $ in view of \eqref{eq: stima-peso-3}.
\end{proof}
\end{thm}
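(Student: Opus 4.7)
The strategy I would adopt is the standard two-step route: prove a uniform $\varepsilon$-independent bound $\|f_\varepsilon\|_{p,\rho}\le K\|f\|_{p,\rho}$, and then combine it with a density argument. Once such a bound is in hand, one approximates $f$ by truncations $f_n:=f\chi_{\{1/n\le|x|\le n\}}$ which are compactly supported away from the origin; on each such annulus the weight $\rho$ is equivalent to a constant, so the classical unweighted mollifier convergence $(f_n)_\varepsilon\to f_n$ in $\LL^p$ automatically yields convergence in $\LL^p_\rho$, and a triangle-inequality argument based on the uniform bound promotes this to $f_\varepsilon\to f$ in $\LL^p_\rho$. Before all this one must verify that $f\in\LL^1_{\mathrm{loc}}(\mathbb{R}^d)$ so that $f_\varepsilon$ is well defined and smooth: this follows from H\"{o}lder's inequality and is exactly where the upper bound $\lambda<(p-1)d$ enters, via the local integrability of $|y|^{-\lambda/(p-1)}$ near the origin.

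For the uniform bound I would first treat the model case $\rho(x)=|x|^\lambda$ on all of $\mathbb{R}^d$ and then recover the two-power weight \eqref{eq: rho-two-powers} by splitting $f=f\chi_{B_1}+f\chi_{B_1^c}$: the mollifications of each piece are eventually supported either inside $B_{3/2}$ or outside $B_{1/2}$, so one can modify $\rho$ outside the relevant zone to a single power without affecting the estimate. For the model case the natural decomposition of the target integral is along $B_{2\varepsilon}\cup B_{2\varepsilon}^c$. On the inner ball $B_{2\varepsilon}$ the weight is integrable (which requires $\lambda>-d$) and $f_\varepsilon$ is bounded in $\LL^\infty$ via the $\LL^1_{\mathrm{loc}}$ estimate, producing a self-cancelling scaling of the form $\varepsilon^{d+\lambda}\cdot\varepsilon^{-(d+\lambda)}$ that leaves a clean multiple of $\|f\|_{p,\lambda}^p$. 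On the outer region $B_{2\varepsilon}^c$ I would apply Jensen's inequality with respect to the probability measure $\eta_\varepsilon(x-y)\,\mathrm{d}y$ to pull the $p$-th power inside, then swap integration order by Fubini to reduce the whole problem to the pointwise weight-transfer estimate
\begin{equation*}
\int_{B_{2\varepsilon}^c}\eta_\varepsilon(x-y)\,|x|^\lambda\,\mathrm{d}x \le K'\,|y|^\lambda \quad \forall y\in\mathbb{R}^d,
\end{equation*}
with $K'$ independent of $\varepsilon$.

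The main obstacle is precisely this weight-transfer inequality, which forces a three-case analysis in $|y|$. If $|y|<\varepsilon$, then $\{|x-y|\le\varepsilon\}\cap B_{2\varepsilon}^c=\emptyset$ and the integral vanishes. If $|y|>2\varepsilon$, the triangle inequality gives $|x|\sim|y|$ uniformly on the support, so the bound is elementary for every real $\lambda$. The delicate range is the transition annulus $\varepsilon\le|y|\le 2\varepsilon$, where one has to dominate the integral by $|x|^\lambda$ over a shell such as $B_{|y|+\varepsilon}\setminus B_{2\varepsilon}$ and check that the resulting quantity $\varepsilon^{-d}\bigl((|y|+\varepsilon)^{d+\lambda}-(2\varepsilon)^{d+\lambda}\bigr)$ is $O(|y|^\lambda)$ uniformly in $\varepsilon$; both the assumption $\lambda>-d$ and the geometric comparability $|y|\sim\varepsilon$ on the annulus are essential to avoid blow-up. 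A conceptually cleaner alternative would be to recognize $|x|^\lambda$ as a Muckenhoupt $A_p$ weight precisely in the range $-d<\lambda<(p-1)d$ and invoke the weighted Hardy--Littlewood maximal estimate, but the elementary argument sketched above has the advantage of being self-contained and of making the role of each power constraint transparent.
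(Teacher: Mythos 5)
Your proposal is correct and follows essentially the same route as the paper's proof: the reduction to the uniform bound $\|f_\varepsilon\|_{p,\rho}\le K\|f\|_{p,\rho}$ plus truncation away from the origin, the split of the norm along $B_{2\varepsilon}\cup B_{2\varepsilon}^c$, the Jensen--Fubini reduction to the weight-transfer inequality with its three-case analysis in $|y|$, and the final splitting $f=f\chi_{B_1}+f\chi_{B_1^c}$ for the two-power weight all match the paper's argument step by step. The remark about Muckenhoupt $A_p$ weights is a valid alternative not pursued in the paper, but your elementary argument is complete as sketched.
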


\begin{oss}\label{oss: sharp-moll} \rm
Note that the above assumption $ \lambda \in \left(-d, (p-1) d \right) $ is necessary. Indeed, consider the following function:
$$ g(x) := \frac{\chi_{B_{1/2}}(x)}{|x|^d \log|x|} \quad \forall x \in \mathbb{R}^d \setminus \{ 0 \} \, . $$
It is apparent that $ g \not \in \LL^1_{\rm loc}(\mathbb{R}^d)$ and its mollification $ g_\varepsilon $ is equal to $ -\infty $ in a set of positive measure, for all $ \varepsilon>0 $. However, $ g $ belongs to $ \LL^p_\lambda(\mathbb{R}^d) $ for all $ \lambda \ge (p-1) d $.

As concerns the bound from below over $ \lambda $, let
$$ h(x) := \chi_{B_1} \, |x|^{-\frac{\lambda}{p}} \in \LL^p_\lambda(\mathbb{R}^d)  \, . $$
The mollification $ h_\varepsilon $ is \emph{strictly positive} in a neighbourhood of the origin, for all $ \varepsilon>0 $. In particular if $ \lambda \le -d $ then $ h_\varepsilon \not\in \LL^p_\lambda(\mathbb{R}^d) $, so that \eqref{eq: approx-L2} (with $ f=h $) cannot hold.
\end{oss}

\section{Fractional Laplacians and power-weighted $ \LL^p $ spaces}\label{sect: tools-Lf}
In this section we first discuss some elementary properties concerning the fractional Laplacian, and a similar related operator, applied to standard test functions. We then investigate more in detail the structure of the space $X_{p,s,\rho}$ (recall Definition \ref{den: spazio-Xs}), which represents the precise functional setting where we shall prove or disprove the validity of the integration-by-parts formulas.

\medskip

The proofs of the first two lemmas are omitted, since they follow \emph{e.g.}~by exploiting arguments similar to those used in \cite[Lemma 2.1]{BV-frac}.
\begin{lem} \label{lem:decay-lap-1}
The fractional Laplacian $(-\Delta)^s(\phi)(x) $ of any $ \phi \in \mathcal{D}(\mathbb{R}^d) $ is a regular function that decays (at least) like $|x|^{-d-2s}$ as $|x| \to \infty $.
\end{lem}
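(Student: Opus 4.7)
The plan is straightforward: exploit the compact support of $\phi$ to reduce the definition of $(-\Delta)^s(\phi)$ to an ordinary (non-principal-value) integral, and then separately establish smoothness on the one hand and the $|x|^{-d-2s}$ decay on the other.

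First, to handle the singularity at $y=x$ and prove regularity, I would pass to the symmetric representation
\begin{equation*}
(-\Delta)^s(\phi)(x) = \frac{C_{d,s}}{2} \int_{\mathbb{R}^d} \frac{2\phi(x) - \phi(x+z) - \phi(x-z)}{|z|^{d+2s}} \, \mathrm{d}z,
\end{equation*}
valid for $\phi \in \mathcal{D}(\mathbb{R}^d)$ via a standard change of variables and cancellation by symmetry. A second-order Taylor expansion at $z=0$ gives $|2\phi(x) - \phi(x+z) - \phi(x-z)| \le \|D^2\phi\|_\infty \, |z|^2$, which is integrable against $|z|^{-d-2s}$ near the origin (since $s<1$), while boundedness of $\phi$ makes the tail integrable at infinity. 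Thus the integral converges absolutely, uniformly in $x$ lying in compact sets; differentiation under the integral sign (applied to $\phi$, which is smooth with compact support) then yields $(-\Delta)^s(\phi) \in C^\infty(\mathbb{R}^d)$.

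Next, for the decay, let $R>0$ be such that $\mathrm{supp}(\phi) \subset B_R$. For $|x|>2R$ one has $\phi(x)=0$, so the principal-value definition reduces to the plain integral
\begin{equation*}
(-\Delta)^s(\phi)(x) = -C_{d,s} \int_{B_R} \frac{\phi(y)}{|x-y|^{d+2s}} \, \mathrm{d}y,
\end{equation*}
without any singularity, since $|x-y|\ge |x|-R \ge |x|/2$ for $y \in B_R$. Bounding $|x-y|^{-d-2s} \le 2^{d+2s} |x|^{-d-2s}$ and using $\|\phi\|_{\LL^1(\mathbb{R}^d)} < \infty$ gives
\begin{equation*}
\bigl|(-\Delta)^s(\phi)(x)\bigr| \le C_{d,s} \, 2^{d+2s} \, \|\phi\|_{\LL^1(\mathbb{R}^d)} \, |x|^{-d-2s} \qquad \forall \, |x| > 2R,
\end{equation*}
which is exactly the announced decay.

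There is no real obstacle: both steps are essentially routine consequences of $\phi$ being $C^\infty_c$. The only mild care is in justifying the equivalence between the principal-value and symmetric formulations, and in checking the interchange of derivative and integral for smoothness; both are standard and can be dispatched without detailed calculation, which is presumably why the authors omit the proof altogether.
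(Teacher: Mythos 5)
Your proof is correct and is precisely the standard argument the paper has in mind: the paper omits the proof of this lemma, referring to \cite[Lemma 2.1]{BV-frac}, which proceeds exactly as you do (second-order incremental quotients for regularity, and the reduction to $-C_{d,s}\int_{B_R}\phi(y)\,|x-y|^{-d-2s}\,\mathrm{d}y$ for $|x|$ large to get the decay). Nothing is missing.
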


\begin{lem} \label{lem:decay-lap-2}
Let $ p \in (1,\infty) $. For any $ \phi \in \mathcal{D}(\mathbb{R}^d) $ the function
\begin{equation*}
l_{p,s}(\phi)(x) := \int_{\mathbb{R}^d}
\frac{\left|\phi(x)-\phi(y)\right|^p}{|x-y|^{d+ps}} \, \mathrm{d}y \ \ \
\forall x \in \mathbb{R}^d
\end{equation*}
is continuous and decays (at least) like $ |x|^{-d-ps} $ as $|x| \to \infty $.

In the special case $ p=2 $ we set $ l_s := l_{2,s} $.
\end{lem}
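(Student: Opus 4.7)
\medskip

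\noindent\textbf{Proof sketch (plan).} The plan is to treat the two claims separately and to reduce both to standard domination arguments, exploiting only the smoothness and compact support of $\phi$.

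For continuity, I would perform the change of variable $z := y-x$, so that
$$ l_{p,s}(\phi)(x) = \int_{\mathbb{R}^d} \frac{|\phi(x)-\phi(x+z)|^p}{|z|^{d+ps}} \, \mathrm{d}z , $$
which isolates the $x$-dependence in the numerator alone. I would then split the integration domain into $\{|z|\le 1\}$ and $\{|z|>1\}$. On the first region I would use the uniform Lipschitz bound $|\phi(x)-\phi(x+z)| \le \|\nabla \phi\|_\infty \, |z|$, giving the integrand $\le \|\nabla\phi\|_\infty^p \, |z|^{-d+p(1-s)}$, which is integrable near $0$ since $p(1-s)>0$. On the second region I would use the trivial bound $|\phi(x)-\phi(x+z)|\le 2\|\phi\|_\infty$, giving an integrand $\le (2\|\phi\|_\infty)^p \, |z|^{-d-ps}$, which is integrable at infinity. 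Since $\phi$ is continuous on $\mathbb{R}^d$, the integrand depends continuously on $x$ for a.e.~$z$, and dominated convergence then yields continuity of $l_{p,s}(\phi)$.

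For the decay, the idea is to exploit the compact support of $\phi$. Let $R>0$ be such that $\mathrm{supp}(\phi) \subset B_R$, and pick $|x| \ge 2R$, so that $\phi(x)=0$. Then
$$ l_{p,s}(\phi)(x) = \int_{B_R} \frac{|\phi(y)|^p}{|x-y|^{d+ps}} \, \mathrm{d}y . $$
For $y \in B_R$ and $|x| \ge 2R$ we have $|x-y| \ge |x|-|y| \ge |x|/2$, whence
$$ l_{p,s}(\phi)(x) \le \frac{2^{d+ps} \, \|\phi\|_p^p}{|x|^{d+ps}} , $$
which is the desired decay. The continuity on the compact ball $\overline{B}_{2R}$ already established in the first step, combined with this bound outside, gives the statement in full.

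No serious obstacle is anticipated: both steps are elementary, the only point deserving attention being the integrability near $z=0$ in the continuity argument, which crucially uses the assumption $s\in(0,1)$ (so that $p(1-s)>0$) together with the smoothness of $\phi$.
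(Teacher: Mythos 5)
Your proof is correct, and it is exactly the elementary splitting argument the paper has in mind (the paper omits the proof, referring to arguments as in \cite[Lemma 2.1]{BV-frac}): near the singularity one uses the Lipschitz bound $|\phi(x)-\phi(x+z)|\le \|\nabla\phi\|_\infty|z|$ together with $s<1$, far away the boundedness of $\phi$, and the decay follows from the compact support exactly as you wrote. Nothing to add.
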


\begin{lem} \label{lem:decay-lap-cutoff}
For any $R>0$, let $\xi_R$ be the cut-off function
\begin{equation*}
\xi_R(x):=\xi\left(\frac{x}{R}\right)  \ \ \ \forall x \in \mathbb{R}^d \, ,
\end{equation*}
where $ \xi $ is a nonnegative, regular function such that $ \|\xi\|_\infty = 1 $, $ \xi \equiv 1$ in $B_1 $ and $ \xi \equiv 0$ in  $B_2^c $. Then, $ (-\Delta)^s(\xi_R) $ and $ l_{p,s}(\xi_R) $ enjoy the following scaling properties:
\begin{equation}\label{eq: scaling-lap}
(-\Delta)^s(\xi_R)(x)=R^{-2s} \, (-\Delta)^s(\xi)({x}/{R}) \, , \quad l_{p,s}(\xi_R)(x)=R^{-ps} \, l_{p,s}(\xi) ( {x}/{R} ) \quad \forall x \in \mathbb{R}^d \, .
\end{equation}
\begin{proof}
We only show the result for $l_{p,s}$, since the proof for $(-\Delta)^s$ is analogous. Upon letting $ \widetilde{y}=y/R $, one has
\[
l_{p,s}(\xi_R)(x)=  \int_{\mathbb{R}^d} \frac{\left|\xi_R(x)-\xi_R(y)\right|^p}{|x-y|^{d+ps}} \, \mathrm{d}y
= R^{-ps} \int_{\mathbb{R}^d} \frac{| \xi( {x}/{R} )-\xi(\widetilde{y}) |^p}{|x/R-\widetilde{y}|^{d+ps}} \, \mathrm{d}\widetilde{y} = R^{-ps} \, l_{p,s}(\xi)({x}/{R})
\]
for all $ x \in \mathbb{R}^d $.
\end{proof}
\end{lem}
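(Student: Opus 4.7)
The plan is to prove both identities by the same elementary change of variables, exploiting the fact that $\xi_R(y) = \xi(y/R)$ and that the kernels $|x-y|^{-d-2s}$ and $|x-y|^{-d-ps}$ are homogeneous in $|x-y|$. I would present the argument for $l_{p,s}(\xi_R)$ first, since it is an absolutely convergent integral and requires no care with principal values, then reduce the fractional Laplacian case to the same substitution.

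For $l_{p,s}(\xi_R)(x)$, I would substitute $y = R\widetilde{y}$, which gives $\mathrm{d}y = R^d \, \mathrm{d}\widetilde{y}$ and $|x-y| = R\,|x/R - \widetilde{y}|$. Using $\xi_R(x) = \xi(x/R)$ and $\xi_R(y) = \xi(\widetilde{y})$, the numerator becomes $|\xi(x/R) - \xi(\widetilde{y})|^p$. Collecting all the factors of $R$ yields $R^d \cdot R^{-d-ps}$ in the integrand, so the whole integral factors as $R^{-ps}\, l_{p,s}(\xi)(x/R)$. This is essentially a one-line calculation, identical to the one the author will write.

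For $(-\Delta)^s(\xi_R)(x)$, the same substitution works, but one must justify that it commutes with the principal value. I would introduce the truncated integral over $\{|y-x| > \varepsilon\}$, apply the change $y = R\widetilde{y}$, which maps this region bijectively onto $\{|\widetilde{y} - x/R| > \varepsilon/R\}$, and note that the Jacobian and the homogeneity of the kernel combine to give a factor $R^{-2s}$ multiplying the truncated integral defining $(-\Delta)^s(\xi)(x/R)$ with cutoff radius $\varepsilon/R$. Letting $\varepsilon\to 0^+$ (equivalently $\varepsilon/R\to 0^+$) on both sides recovers the principal values and yields the claimed identity.

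No step is genuinely difficult: the only subtlety is the bookkeeping with the principal value, but this is automatic because the change of variables sends symmetric neighbourhoods of $x$ to symmetric neighbourhoods of $x/R$. Both assertions in \eqref{eq: scaling-lap} then follow, and this scaling will later be the reason why $\gamma = 2s$ (resp.\ $\gamma = d - \tfrac{p}{2}(d-2s)$) emerges as the critical exponent in the cut-off arguments of Section~\ref{sect: SAIP}.
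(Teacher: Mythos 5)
Your proposal is correct and follows essentially the same change of variables $y=R\widetilde{y}$ that the paper uses for $l_{p,s}$; the paper simply declares the $(-\Delta)^s$ case "analogous," whereas you additionally spell out the (correct and routine) bookkeeping with the principal value. No gaps.
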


The following lemma displays some consequences of the above properties.
\begin{lem} \label{lem: stime-termini-cutoff-2}
Let $\xi$ and $\xi_R$ be as in Lemma \ref{lem:decay-lap-cutoff}. Let $ q \in [1,\infty) $ and $\gamma \in [0,d+2 q^\prime s)$. Then the norms
\begin{equation}\label{eq: finit-s-lap}
\left\|  |x|^\gamma (-\Delta)^s (\xi) \right\|_{q,-\gamma} \quad \textrm{and} \quad \left\| |x|^\gamma l_s(\xi) \right\|_{q,-\gamma}
\end{equation}
are finite. If in addition $ \gamma \in [0,d+2s] $, then also the norms
\begin{equation}\label{eq: finit-l}
\left\| |x|^\gamma (-\Delta)^s (\xi) \right\|_\infty  \quad \textrm{and}  \quad \left\| |x|^\gamma l_s(\xi) \right\|_\infty
\end{equation}
are finite. Moreover, the following identities hold:
\begin{equation}\label{eq: finit-s-lap-R-q}
\left\| |x|^\gamma (-\Delta)^s \! \left(\xi_R \right) \right\|_{q,-\gamma} = \frac{\left\| |x|^\gamma (-\Delta)^s (\xi) \right\|_{q,-\gamma}}{R^{2s-\gamma - \frac{d-\gamma}{q}}} \, ,
\end{equation}
\begin{equation}\label{eq: finit-s-lap-R}
\left\| |x|^\gamma (-\Delta)^s \! \left(\xi_R \right) \right\|_\infty = \frac{\left\| |x|^\gamma (-\Delta)^s (\xi) \right\|_\infty}{R^{2s-\gamma}} \, ,
\end{equation}
\begin{equation}\label{eq: finit-l-R-q}
\left\| |x|^\gamma l_s \! \left(\xi_R\right) \right\|_{q,-\gamma} = \frac{\left\| |x|^\gamma l_s(\xi) \right\|_{q,-\gamma}}{R^{2s-\gamma - \frac{d-\gamma}{q}}} \, ,
\end{equation}
\begin{equation}\label{eq: finit-l-R}
\left\| |x|^\gamma l_s \! \left(\xi_R\right) \right\|_\infty = \frac{\left\| |x|^\gamma l_s(\xi) \right\|_\infty}{R^{2s-\gamma}} \, .
\end{equation}
\begin{proof}
The finiteness of \eqref{eq: finit-s-lap} and \eqref{eq: finit-l} is ensured by the decay properties of $(-\Delta)^s (\xi)(x)$ and $l_s(\xi)(x)$ recalled by Lemmas \ref{lem:decay-lap-1} and \ref{lem:decay-lap-2}. Identities \eqref{eq: finit-s-lap-R-q}--\eqref{eq: finit-l-R} just follow from \eqref{eq: scaling-lap}.
\end{proof}
\end{lem}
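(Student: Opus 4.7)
The strategy is to reduce everything to the pointwise decay estimates of Lemmas \ref{lem:decay-lap-1}--\ref{lem:decay-lap-2} and to the elementary scaling relations \eqref{eq: scaling-lap}, so that the statement becomes a matter of bookkeeping with weighted integrals together with a single change of variables. Nothing deep is required; the only care needed is in matching the exponent $-(2s-\gamma-(d-\gamma)/q)$ in \eqref{eq: finit-s-lap-R-q} and \eqref{eq: finit-l-R-q}.

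For the finiteness of $\||x|^\gamma (-\Delta)^s(\xi)\|_{q,-\gamma}$, I would first rewrite
\begin{equation*}
\left\||x|^\gamma (-\Delta)^s(\xi)\right\|_{q,-\gamma}^q = \int_{\mathbb{R}^d} \left|(-\Delta)^s(\xi)(x)\right|^q |x|^{\gamma(q-1)} \, \mathrm{d}x .
\end{equation*}
Since $(-\Delta)^s(\xi)$ is a regular function by Lemma \ref{lem:decay-lap-1} and $\gamma(q-1) \ge 0$, the integrand is locally bounded, so integrability on $B_1$ is immediate. On $B_1^c$, Lemma \ref{lem:decay-lap-1} provides the bound $|(-\Delta)^s(\xi)(x)| \le C|x|^{-d-2s}$, so the integrand is dominated by $|x|^{\gamma(q-1) - q(d+2s)}$, whose integral on $B_1^c$ is finite exactly when $q(d+2s) - \gamma(q-1) > d$, that is, $\gamma < d + 2q's$. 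The argument for $l_s(\xi)$ is identical, using Lemma \ref{lem:decay-lap-2} in place of Lemma \ref{lem:decay-lap-1}. For the $L^\infty$ bounds, continuity of $(-\Delta)^s(\xi)$ and $l_s(\xi)$ together with $\gamma \ge 0$ makes $|x|^\gamma(-\Delta)^s(\xi)$ and $|x|^\gamma l_s(\xi)$ bounded on $B_1$, while on $B_1^c$ the decay $|x|^{-d-2s}$ multiplied by $|x|^\gamma$ stays bounded precisely under the threshold $\gamma \le d + 2s$.

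The four scaling identities \eqref{eq: finit-s-lap-R-q}--\eqref{eq: finit-l-R} all follow from \eqref{eq: scaling-lap} via the substitution $\tilde{x} = x/R$. For instance,
\begin{equation*}
\left\||x|^\gamma (-\Delta)^s(\xi_R)\right\|_{q,-\gamma}^q = R^{-2sq} \int_{\mathbb{R}^d} |x|^{\gamma(q-1)} \left|(-\Delta)^s(\xi)(x/R)\right|^q \mathrm{d}x = R^{-2sq + \gamma(q-1) + d} \left\||x|^\gamma (-\Delta)^s(\xi)\right\|_{q,-\gamma}^q ,
\end{equation*}
and taking the $q$-th root yields the exponent $(-2sq + \gamma(q-1) + d)/q = -(2s - \gamma - (d-\gamma)/q)$, which matches \eqref{eq: finit-s-lap-R-q}. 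The $L^\infty$ identities are even simpler, since they reduce to $\sup_x |x|^\gamma |f(x/R)| = R^\gamma \sup_{\tilde{x}} |\tilde{x}|^\gamma |f(\tilde{x})|$ combined with the overall factor $R^{-2s}$ from \eqref{eq: scaling-lap}. The formulas for $l_s$ are obtained by the same substitution.

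\textbf{Main obstacle.} There is really none: the entire lemma is a tracking exercise for two elementary ingredients (decay and scaling). The only point where one has to be attentive is verifying that the exponent condition produced by the integral computation is exactly $\gamma < d + 2q's$ (not, e.g., $\gamma < d/q + 2s$ or some other rearrangement), and that the final exponent in the $R$-scaling is correctly packaged as $2s - \gamma - (d-\gamma)/q$ rather than an algebraically equivalent but less transparent form.
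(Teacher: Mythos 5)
Your proposal is correct and follows exactly the route the paper takes: the paper's proof simply cites the decay properties of Lemmas \ref{lem:decay-lap-1}--\ref{lem:decay-lap-2} for the finiteness claims and the scaling relations \eqref{eq: scaling-lap} for the identities, and your write-up just makes explicit the weighted-integral bookkeeping and the change of variables $\tilde{x}=x/R$ that the paper leaves to the reader. The exponent computations (the threshold $\gamma<d+2q's$ and the factor $R^{-(2s-\gamma-(d-\gamma)/q)}$) check out.
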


For a function $f$ belonging to $ \LL^1_{\rm loc}(\R^d) \cap \LL^1_{-d-2s}(B_1^c)$, a property that any element of $\LL^p_{\rho}(\mathbb{R}^d)$ (let $ p \in (1,\infty) $) enjoys provided $ \rho $ satisfies \eqref{eq: rho-two-powers} with
\begin{equation*}\label{eq: cond-basic-rho}
\lambda < (p-1)d \quad  \textrm{and}  \quad  \Lambda>-d-2ps \, , 
\end{equation*}
the action
\begin{equation}\label{eq: s-distrib}
\phi \mapsto \int_{\mathbb{R}^d} f(x) \, (-\Delta)^s(\phi)(x) \, \mathrm{d}x  \ \ \ \forall \phi \in \mathcal{D}(\mathbb{R}^d)
\end{equation} 
is indeed an element of $\mathcal{D}^\prime(\mathbb{R}^d)$. This is an immediate consequence of the fact that the notion of convergence of a sequence $\{ \phi_n \} \subset \mathcal{D}(\mathbb{R}^d)$ to $\phi$ in $\mathcal{D}(\mathbb{R}^d)$ implies, in particular, the pointwise convergence of $\{(-\Delta)^s(\phi_n) \}$ to $(-\Delta)^s(\phi)$ and the validity of the bound
\begin{equation*}\label{eq: s-distrib-bound}
\left| (-\Delta)^s(\phi_n)(x) \right| \le K \left( 1+|x| \right)^{-d-2s} \quad \forall x \in \mathbb{R}^d
\end{equation*} 
for a suitable positive constant $K$ independent of $n$ (recall Lemma \ref{lem:decay-lap-1}). Since $f \in \LL^1_{\rm loc}(\R^d) \cap \LL^1_{-d-2s}(B_1^c)$, we can pass to the limit in \eqref{eq: s-distrib} (with $ \phi=\phi_n $) as $ n \to \infty $ by dominated convergence.

In the next lemma we show that, for regular functions having suitable integrability properties at infinity, the distributional fractional Laplacian and the classical one do coincide.
\begin{lem}\label{lem: laplaciano-classico-laplaciano-distrib}
Let $ v \in C^\infty(\mathbb{R}^d) \cap \LL^p_{\Lambda}(B_1^c)$, with $p \in (1,\infty) $ and $\Lambda \ge -d-ps $. Then the classical fractional Laplacian of $v$, defined by
\begin{equation}\label{eq: classical-phi}
(-\Delta)^s(v)(x) := C_{d,s} \  p.v. \int_{\mathbb{R}^d} \frac{v(x)-v(y)}{|x-y|^{d+2s}} \, \mathrm{d}y \quad \forall x \in \mathbb{R}^d \, ,
\end{equation}
is a continuous function which coincides with its distributional fractional Laplacian, in the sense that
\begin{equation}\label{eq: id-classica}
\int_{\mathbb{R}^d} v(x) \, (-\Delta)^s(\phi)(x) \, \mathrm{d}x = \int_{\mathbb{R}^d} (-\Delta)^s (v)(x) \, \phi(x) \, \mathrm{d}x \ \ \ \forall \phi \in \mathcal{D}(\mathbb{R}^d) \, .
\end{equation}
\begin{proof}
To begin with, let us prove that formula \eqref{eq: classical-phi} provides us with a locally bounded function of $x$. To this end, fix $R>0$ and let $x$ vary in $B_R$. It is direct to check that
\begin{equation*}\label{eq: classical-phi-a}
x \mapsto p.v. \int_{B_{2R}} \frac{v(x)-v(y)}{|x-y|^{d+2s}} \, \mathrm{d}y  \ \ \ \forall x \in B_R 
\end{equation*}
is bounded in modulus by a constant (depending on $R$) times $\| \nabla^2 v \|_{\LL^\infty(B_{2R})}$. Moreover, still for $x$ varying in $B_R$, we have:
\begin{equation*}\label{eq: classical-phi-estim-1}
\begin{aligned}
& 2^{-d-2s} \int_{B_{2R}^c} \frac{\left|v(x)-v(y)\right|}{|x-y|^{d+2s}} \, \mathrm{d}y \\
\le & \left\| v \right\|_{ \LL^\infty(B_{R}) } \int_{B_{2R}^c} |y|^{-d-2s} \, \mathrm{d}y  + \| v \|_{\LL^p_\Lambda(B_{2R}^c)} \left( \int_{B_{2R}^c} |y|^{ -\frac{p(d+2s)+\Lambda}{p-1}} \, \mathrm{d}y \right)^{\frac{1}{p^\prime}} ,
\end{aligned}  
\end{equation*}
where $ p^\prime := \tfrac{p}{p-1} $. Note that the r.h.s.~is finite since $v \in C^\infty(\mathbb{R}^d) \cap \LL^p_{\Lambda}(B_1^c) $ with $ \Lambda \ge -d-ps > -d-2ps $. We have therefore proved that $(-\Delta)^s(v)$ is locally bounded. Continuity follows by similar arguments that we omit.

Now we must prove that $(-\Delta)^s(v)$ is in fact the distributional fractional Laplacian of $v$, namely the validity of \eqref{eq: id-classica}. Let us first consider the truncated function $ \xi_R  v \in \mathcal{D}(\mathbb{R}^d) $ (with $\xi_R$ as in Lemma \ref{lem:decay-lap-cutoff}) and observe that, for the latter, the identity
\begin{equation}\label{eq: id-classica-cutoff} 
\int_{\mathbb{R}^d}  \xi_R(x)v(x) \, (-\Delta)^s(\phi)(x) \, \mathrm{d}x  = \int_{\mathbb{R}^d} (-\Delta)^s (\xi_R v)(x) \, \phi(x) \, \mathrm{d}x \ \ \ \forall \phi \in \mathcal{D}(\mathbb{R}^d)
\end{equation}
does hold (recall the related discussion in the Introduction). Using the product formula
\begin{equation}\label{eq: lapll-cut-off-prodotto-phi}
\begin{aligned}
(-\Delta)^s \left( \xi_R v \right) (x) = & \xi_R(x) \, (-\Delta)^s (v)(x) + (-\Delta)^s(\xi_R)(x) \, v(x) \\
& + 2 \, C_{d,s} \int_{\mathbb{R}^d} \frac{\left(v(x) - v(y)  \right) \left( \xi_R(x) - \xi_R(y) \right) }{|x-y|^{d+2s}} \, \mathrm{d}y  
\end{aligned}
\end{equation}
and plugging it in \eqref{eq: id-classica-cutoff}, we get:
\begin{equation}\label{eq: id-classica-cutoff-bis}
\begin{aligned}
& \int_{\mathbb{R}^d} \xi_R(x)v(x) \, (-\Delta)^s(\phi)(x) \, \mathrm{d}x \\
= & \int_{\mathbb{R}^d} \phi(x) \xi_R(x) \, (-\Delta)^s (v) (x)  \, \mathrm{d}x + \int_{\mathbb{R}^d} \phi(x)  v(x) \, (-\Delta)^s(\xi_R)(x) \, \mathrm{d}x \\
& + 2 \, C_{d,s} \int_{\mathbb{R}^d} \int_{\mathbb{R}^d} \phi(x) \, \frac{\left(v(x) - v(y)  \right) \left( \xi_R(x) - \xi_R(y) \right) }{|x-y|^{d+2s}}  \, \mathrm{d}x \mathrm{d}y  \, .
\end{aligned}
\end{equation}
By letting $R \to \infty$ we can pass to the limit safely in the l.h.s.~and in the first term of the r.h.s.\ of \eqref{eq: id-classica-cutoff-bis}, since $v$ is locally regular and integrable at infinity with respect to the weight $ |x|^{-d-2s}$, while $(-\Delta)^s(v)$ is locally bounded as shown above. The second term vanishes: indeed, by taking advantage of \eqref{eq: finit-s-lap-R} (with $\gamma=0 $), we obtain:
\begin{equation*}\label{eq: pass-lim-1}
\int_{\mathbb{R}^d} \left| \phi(x) v(x) \, (-\Delta)^s(\xi_R)(x) \right| \mathrm{d}x \le R^{-2s} \left\| (-\Delta)^s(\xi) \right\|_\infty \int_{\mathbb{R}^d} \left| \phi(x) v(x) \right| \mathrm{d}x \, .
\end{equation*}
In order to handle the last term in the r.h.s.\ of \eqref{eq: id-classica-cutoff-bis}, we have to work a bit more. 
First of all, let us prove that also $l_{p,s}(v)(x)$ is locally bounded (actually continuous). Indeed, for all $R>0$ the function 
\begin{equation}\label{eq: classical-phi-a-p}
x \mapsto \int_{B_{2R}} \frac{\left| v(x)-v(y) \right|^p}{|x-y|^{d+ps}} \, \mathrm{d}y \quad \forall x \in B_R
\end{equation}
is bounded in modulus by a constant (depending on $R$) times $\| \nabla v \|_{\LL^\infty(B_{2R})}^p $. Moreover, 
\begin{equation*}\label{eq: classical-phi-estim-1-p}
2^{-[d-1+p(s+1)]} \int_{B_{2R}^c} \frac{\left|v(x)-v(y)\right|^p}{|x-y|^{d+ps}} \, \mathrm{d}y  \le \left\| v \right\|_{\LL^\infty(B_{R})}^p \int_{B_{2R}^c} |y|^{-d-ps} \, \mathrm{d}y + \| v \|_{\LL^p_{-d-ps}(B_{2R}^c)}^p \, ,
\end{equation*}
and the r.h.s.\ is finite thanks to the assumption $\Lambda \ge -d-ps $. By applying H\"{o}lder's inequality w.r.t.~$\mathrm{d}y $ and using Lemma \ref{lem:decay-lap-cutoff}, it is not difficult to infer the estimate
\begin{equation*}\label{eq: pass-lim-2-p}
\int_{\mathbb{R}^d} \int_{\mathbb{R}^d} \left| \phi(x) \, \frac{\left(v(x) - v(y)  \right) \left( \xi_R(x) - \xi_R(y) \right) }{|x-y|^{d+2s}} \right| \mathrm{d}x \mathrm{d}y 
\le \frac{\| l_{p^\prime,s}(\xi) \|^{\frac{1}{p^\prime}}_{\infty^{\phantom{a}}}}{R^s} \int_{\mathbb{R}^d} | \phi(x) | \left[ l_{p,s}(v)(x) \right]^{\frac{1}{p}} \mathrm{d}x \, .
\end{equation*}
By letting $R \to \infty $ we then deduce that also the last term in the r.h.s.\ of \eqref{eq: id-classica-cutoff-bis} vanishes, so that \eqref{eq: id-classica} is finally proved.
\end{proof}
\end{lem}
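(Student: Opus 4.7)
The plan is to split the argument into two logically separate parts: first, proving that the principal-value expression in \eqref{eq: classical-phi} is well defined and continuous pointwise, and then showing that this classical object agrees with the distributional fractional Laplacian via a cutoff-and-limit procedure. For the first step I would fix $R>0$, let $x$ range over $B_R$, and split the integral into a local part over $B_{2R}$ and a tail over $B_{2R}^c$. The local part is handled by the standard trick for the principal value: writing $v(y)=v(x)+\nabla v(x)\cdot(y-x)+O(|y-x|^2)$, the linear term vanishes by antisymmetry and the remainder is dominated by $\|\nabla^2 v\|_{\LL^\infty(B_{2R})}|x-y|^{2-d-2s}$, which is integrable near $x$ since $2s<2$. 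For the tail I would use $|v(x)-v(y)|\le |v(x)|+|v(y)|$ and apply H\"older's inequality with exponents $(p,p')$ to the $|v(y)|$ piece; a direct computation shows that the relevant weight integral is finite precisely under the condition $\Lambda>-d-2ps$, which is implied by the hypothesis $\Lambda\ge -d-ps$. Continuity in $x$ then follows from the same estimates and a dominated-convergence argument.

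For the second step I would apply the identity already known on test functions to the cutoff $\xi_R v\in\mathcal{D}(\mathbb{R}^d)$, where $\xi_R$ is as in Lemma~\ref{lem:decay-lap-cutoff}: for any $\phi\in\mathcal{D}(\mathbb{R}^d)$,
\begin{equation*}
\int_{\mathbb{R}^d}\xi_R(x)v(x)\,(-\Delta)^s(\phi)(x)\,\mathrm{d}x=\int_{\mathbb{R}^d}(-\Delta)^s(\xi_R v)(x)\,\phi(x)\,\mathrm{d}x.
\end{equation*}
Expanding the right-hand side via the pointwise product formula for $(-\Delta)^s$ produces three contributions: $\xi_R(-\Delta)^s v$, $v(-\Delta)^s\xi_R$, and a bilinear ``cross'' term. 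As $R\to\infty$ the left-hand side and the first term on the right converge by dominated convergence, using the $|x|^{-d-2s}$ decay of $(-\Delta)^s\phi$ from Lemma~\ref{lem:decay-lap-1}, the compact support of $\phi$, and the local boundedness of $(-\Delta)^s v$ just established. The second term is controlled directly via the scaling identity \eqref{eq: finit-s-lap-R}, which gives an $R^{-2s}$ prefactor that kills the contribution since $\phi v\in\LL^1(\mathbb{R}^d)$.

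The main obstacle is the cross term, which cannot be estimated by a pure absolute-value bound and requires using the cancellation in both $v$ and $\xi_R$ simultaneously. I would bound its modulus pointwise by an inner H\"older inequality in $y$ with exponents $(p,p')$, obtaining
\begin{equation*}
\biggl|\int_{\mathbb{R}^d}\frac{(v(x)-v(y))(\xi_R(x)-\xi_R(y))}{|x-y|^{d+2s}}\,\mathrm{d}y\biggr|\le \bigl[l_{p,s}(v)(x)\bigr]^{\frac{1}{p}}\bigl[l_{p',s}(\xi_R)(x)\bigr]^{\frac{1}{p'}},
\end{equation*}
after which the scaling \eqref{eq: finit-l-R} produces an overall $R^{-s}$ factor. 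For this to be rigorous I need $l_{p,s}(v)$ to be locally bounded; this follows by a splitting identical to step one, where the local piece is dominated by $\|\nabla v\|_{\LL^\infty(B_{2R})}^p$ and the tail piece reduces to a constant multiple of $\|v\|_{\LL^p_\Lambda(B_{2R}^c)}^p$, thanks now to the full strength of the assumption $\Lambda\ge -d-ps$, which enters here in a critical way (contrary to step one, where only $\Lambda>-d-2ps$ was used). Integrating against $|\phi|$ and letting $R\to\infty$ then makes the cross term vanish, and \eqref{eq: id-classica} follows.
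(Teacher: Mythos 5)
Your proposal is correct and follows essentially the same route as the paper: the same $B_{2R}$/$B_{2R}^c$ splitting for local boundedness, the same cutoff $\xi_R v$ with the product formula, the same $R^{-2s}$ scaling for the middle term, and the same inner H\"older bound $[l_{p,s}(v)]^{1/p}[l_{p',s}(\xi_R)]^{1/p'}$ for the cross term, with the full hypothesis $\Lambda\ge -d-ps$ entering exactly where you say it does. No gaps.
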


Although most of our results will also hold in the case of positive $ \lambda $ and $ \Lambda $, from here on we shall mainly focus on negative powers (\emph{i.e.}~on weights $ \rho $ as in Definition \ref{den: spazio-Xs}), since we aim at considering weights that can be singular near the origin ($ \lambda $ negative), and the matter of the validity of \eqref{eq: parti-intro} becomes less and less trivial the faster $ \rho(x) $ decays as $ |x|\to\infty $ ($ \Lambda $ negative). 

\smallskip

The following proposition shows some useful properties of the space $ X_{p,s,\rho} $.
\begin{pro}\label{lem: densita-s} 
Let $p $, $ p^\prime $, $ \rho $, $ \rho^\prime $ and $ X_{p,s,\rho} $ be as in Definition \ref{den: spazio-Xs}. Then:
\begin{enumerate}[(a)]
\item $\mathcal{D}(\mathbb{R}^d) \subset X_{p,s,\rho}$\,;  \label{X-a}
\item $X_{p,s,\rho}$ endowed with the norm
\begin{equation*}\label{eq: norma-hilb}
\left\| v \right\|_{X_{p,s,\rho}} := \left( \left\| v \right\|_{p,\rho}^2 + \left\| (-\Delta)^s (v) \right\|_{p^\prime,\rho^\prime}^2 \right)^{\frac12} \ \ \ \forall v \in X_{p,s,\rho}
\end{equation*} \label{X-b} 
is a reflexive Banach space (Hilbert if $p=2$)\,;
\item the subspace $C^\infty(\mathbb{R}^d) \cap X_{p,s,\rho}$ is dense in $X_{p,s,\rho}$\,; \label{X-c}
\item the map $ \mathcal{B}: X_{p,s,\rho} \times X_{p,s,\rho} \mapsto \mathbb{R}$, defined by 
\begin{equation*}\label{eq: bilin-Xs}
\mathcal{B}(v,w) := \int_{\mathbb{R}^d} (-\Delta)^s(v)(x) \, w(x) \, \mathrm{d}x  \ \ \ \forall v,w \in X_{p,s,\rho} \, ,
\end{equation*}
is a continuous bilinear form on $X_{p,s,\rho}$\,. \label{X-d}
\end{enumerate} 
\begin{proof}
In order to prove \eqref{X-a} it is enough to check that, for any $\phi \in \mathcal{D}(\mathbb{R}^d)$, we have $\phi \in \LL^p_{\rho}(\mathbb{R}^d)$ and $ (-\Delta)^s(\phi) \in \LL^{p^\prime}_{\rho^\prime}(\mathbb{R}^d)$. This is straightforward: $\rho$ is locally integrable since $ \gamma_0 \in [0,d) $ and $ (-\Delta)^s(\phi)(x)$ is a regular function decaying at least like $|x|^{-d-2s}$ as $|x|\to \infty$ (Lemma \ref{lem:decay-lap-1}), which in particular implies that it belongs to $ \LL^{p^\prime}_{\rho^\prime}(\mathbb{R}^d) $ since $ \gamma \le d+ps $.

As for \eqref{X-b}, let us take a Cauchy sequence $\{ v_n \} \subset X_{p,s,\rho}$. By the definition of $\| \cdot \|_{X_{p,s,\rho}}$ and by the completeness of $ \LL^p_{\rho}(\mathbb{R}^d)$ and $ \LL^{p^\prime}_{\rho^\prime}(\mathbb{R}^d)$, there exist $ v \in \LL^p_{\rho}(\mathbb{R}^d)$ and $\mathcal{V} \in \LL^{p^\prime}_{\rho^\prime}(\mathbb{R}^d)$ such that $ v_n \to v $ in $ \LL^p_\rho(\mathbb{R}^d) $ and $ (-\Delta)^s(v_n) \to \mathcal{V} $ in $ \LL^{p^\prime}_{\rho^\prime}(\mathbb{R}^d)  $ as $ n \to \infty $, respectively. 
Showing the completeness of $X_{p,s,\rho}$ is therefore equivalent to showing that $\mathcal{V}=(-\Delta)^s(v)$, which holds provided we can pass to the limit in the identity
\begin{equation*}\label{eq: s-distrib-in-X-lim}
\int_{\mathbb{R}^d} v_n(x) \, (-\Delta)^s (\phi)(x) \, \mathrm{d}x = \int_{\mathbb{R}^d} (-\Delta)^s (v_n)(x) \, \phi(x)  \, \mathrm{d}x  \ \ \ \forall \phi \in \mathcal{D}(\mathbb{R}^d) \, .
\end{equation*} 
This is indeed the case because $ (-\Delta)^s(\phi) \in \LL^{p^\prime}_{\rho^\prime}(\mathbb{R}^d)$ reads $ \rho^{-1} (-\Delta)^s(\phi) \in \LL^{p^\prime}_{\rho}(\mathbb{R}^d) $, and the same is true for $(-\Delta)^s(v_n)$. The fact that $ X_{s,\rho} $ is Hilbert just follows upon defining the scalar product
\begin{equation*}\label{eq: scalar-hilb}
\left\langle v,w \right\rangle_{X_{s,\rho}} := \int_{\mathbb{R}^d} v(x) \, w(x) \, \rho(x) \mathrm{d}x + \int_{\mathbb{R}^d} (-\Delta)^s(v)(x) \, (-\Delta)^s(w)(x) \, [\rho(x)]^{-1} \mathrm{d}x \ \ \ \forall v,w \in X_{s,\rho} \, .
\end{equation*}
In general $ X_{p,s,\rho} $ is reflexive because so are $ \LL^p_{\rho}(\mathbb{R}^d) $ and $ \LL^{p^\prime}_{\rho^\prime}(\mathbb{R}^d) $.

Now let us deal with \eqref{X-c}. We shall exploit the key result provided by Theorem \ref{lem: lemma-moll}. Thanks to the latter, given any $ v \in X_{p,s,\rho}$ its mollification $v_\varepsilon$ belongs to $C^\infty(\mathbb{R}^d) \cap \LL^p_{\rho}(\mathbb{R}^d)$ and converges to $v$ in $ \LL^p_{\rho}(\mathbb{R}^d)$ as $ \varepsilon \to 0 $. We claim that the fractional Laplacian of $ v_\varepsilon$, which is well defined both in the classical and in the distributional sense in view of Lemma \ref{lem: laplaciano-classico-laplaciano-distrib}, is in fact the mollification of $(-\Delta)^s(v)$, that is
\begin{equation}\label{eq: identita-fondamentale-mollificazione}
(-\Delta)^s (v_\varepsilon) =\left[ (-\Delta)^s (v) \right]_\varepsilon .
\end{equation}
Indeed, for any $ \phi \in \mathcal{D}(\mathbb{R}^d)$ the following identities hold:
\begin{equation*}\label{eq: identita-fondamentale-mollificazione-prova-1}
\begin{aligned}
\int_{\mathbb{R}^d} (-\Delta)^s(\phi)(x) \, v_\varepsilon(x) \, \mathrm{d}x  = & \int_{\mathbb{R}^d}  (-\Delta)^s(\phi)(x) \left( \int_{\mathbb{R}^d} \eta_\varepsilon(x-y) \, v(y) \, \mathrm{d}y  \right) \mathrm{d}x \\
= & \int_{\mathbb{R}^d} \phi(x) \left( \int_{\mathbb{R}^d} \eta_\varepsilon(x-y) 	\, (-\Delta)^s(v)(y) \, \mathrm{d}y \right) \mathrm{d}x \\
= & \int_{\mathbb{R}^d} \phi(x) \left[ (-\Delta)^s (v) \right]_\varepsilon \! (x) \, \mathrm{d}x \, .
\end{aligned}
\end{equation*}
The above exchanges of order of integration are justified by Fubini's Theorem since
$$ 
\int_{\mathbb{R}^d} \int_{\mathbb{R}^d} \left| (-\Delta)^s(\phi)(x) \eta_\varepsilon(x-y) v(y) \right| \mathrm{d}x \mathrm{d}y + \int_{\mathbb{R}^d} \int_{\mathbb{R}^d} \left| \phi(x) \eta_\varepsilon(x-y) (-\Delta)^s(v)(y) \right| \mathrm{d}x \mathrm{d}y < \infty
$$
as a consequence of the fact that $ |v|_\varepsilon , \phi \in \LL^p_{\rho}(\mathbb{R}^d) $ and $ (-\Delta)^s(\phi) , |(-\Delta)^s(v)|_\varepsilon \in \LL^{p^\prime}_{\rho^\prime}(\mathbb{R}^d) $. Having established \eqref{eq: identita-fondamentale-mollificazione} we can use again Theorem \ref{lem: lemma-moll} to deduce that $v_\varepsilon \in C^\infty(\mathbb{R}^d) \cap X_{p,s,\rho}$ and 
$$ 
\lim_{\varepsilon \to 0} \left\| v_\varepsilon - v \right\|_{X_{p,s,\rho}} = \lim_{\varepsilon \to 0} \left( \left\| v_\varepsilon - v \right\|_{p,\rho}^2 + \left\| [(-\Delta)^s(v)]_\varepsilon - (-\Delta)^s(v) \right\|_{p^\prime,\rho^\prime}^2 \right)^{\frac12} = 0 \, ,
$$
which does prove \eqref{X-c}.

The only nontrivial point of \eqref{X-d} is the continuity of $ \mathcal{B} $, which follows as a direct application of H\"{o}lder's inequality w.r.t.~the measure $ \rho(x)\mathrm{d}x $:
\begin{equation*}\label{eq: bilin-Xs-proof}
\left| \mathcal{B}(v,w) \right| \le \left( \int_{\mathbb{R}^d} \left|(-\Delta)^s(v)(x) \right|^{p^\prime} \rho^\prime(x) \mathrm{d}x \right)^{\frac{1}{p^\prime}} \left( \int_{\mathbb{R}^d} \left|w(x)\right|^p \rho(x) \mathrm{d}x \right)^{\frac1p} \le
\left\| v \right\|_{X_{p,s,\rho}} \left\| w \right\|_{X_{p,s,\rho}} .
\end{equation*}
\end{proof}
\end{pro}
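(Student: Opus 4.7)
For part (a), any $\phi \in \mathcal{D}(\mathbb{R}^d)$ is bounded with compact support and $\rho$ is locally integrable since $\gamma_0 < d$, so $\phi \in \LL^p_\rho(\mathbb{R}^d)$; Lemma \ref{lem:decay-lap-1} gives $(-\Delta)^s(\phi) \in C^\infty(\mathbb{R}^d)$ with decay at least like $|x|^{-d-2s}$, which combined with $\gamma \le d+ps$ and $\gamma_0 \ge 0$ places it in $\LL^{p^\prime}_{\rho^\prime}(\mathbb{R}^d)$. Part (d) is an immediate H\"older inequality with exponents $p^\prime,p$ in the measure $\mathrm{d}x$ applied to the factorisation $(-\Delta)^s(v)(\rho^\prime)^{1/p^\prime} \cdot w\,\rho^{1/p}$, yielding $|\mathcal{B}(v,w)| \le \|(-\Delta)^s v\|_{p^\prime,\rho^\prime}\|w\|_{p,\rho} \le \|v\|_{X_{p,s,\rho}}\|w\|_{X_{p,s,\rho}}$. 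For (b), a Cauchy sequence $\{v_n\}$ produces limits $v \in \LL^p_\rho$ and $\mathcal{V} \in \LL^{p^\prime}_{\rho^\prime}$ by completeness of the weighted Lebesgue spaces; identifying $\mathcal{V}$ with the distributional $(-\Delta)^s v$ reduces to letting $n \to \infty$ in $\int v_n\,(-\Delta)^s(\phi)\,\mathrm{d}x = \int (-\Delta)^s(v_n)\,\phi\,\mathrm{d}x$ for every $\phi \in \mathcal{D}(\mathbb{R}^d)$, which is legal by (a) plus H\"older applied to the pairs $v_n - v$ and $(-\Delta)^s(v_n)-\mathcal{V}$. Reflexivity follows because $v \mapsto (v,(-\Delta)^s(v))$ is a linear isometry onto a closed subspace of the reflexive product $\LL^p_\rho(\mathbb{R}^d) \times \LL^{p^\prime}_{\rho^\prime}(\mathbb{R}^d)$; in the Hilbert case $p=2$, the product inner product transfers to $X_{s,\rho}$.

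The substantive part is (c), which I would handle by mollification $v_\varepsilon := \eta_\varepsilon \ast v$. Theorem \ref{lem: lemma-moll} applies to $v \in \LL^p_\rho$ (the exponent $-\gamma_0$ lies in $(-d,(p-1)d)$ thanks to $\gamma_0 \in [0,d)$) and yields $v_\varepsilon \to v$ in $\LL^p_\rho$ with $v_\varepsilon \in C^\infty(\mathbb{R}^d)$; Lemma \ref{lem: laplaciano-classico-laplaciano-distrib} guarantees that the classical and distributional fractional Laplacians of $v_\varepsilon$ agree. The heart of the argument is the commutation identity
\begin{equation*}
(-\Delta)^s(v_\varepsilon) = \left[(-\Delta)^s(v)\right]_\varepsilon,
\end{equation*}
because once it is in hand, Theorem \ref{lem: lemma-moll} applied this time to $(-\Delta)^s(v) \in \LL^{p^\prime}_{\rho^\prime}$ (with the admissible exponent $\gamma_0(p^\prime-1) \in [0,(p^\prime-1)d)$) gives $(-\Delta)^s(v_\varepsilon) \to (-\Delta)^s(v)$ in $\LL^{p^\prime}_{\rho^\prime}$, so that $v_\varepsilon \in C^\infty(\mathbb{R}^d) \cap X_{p,s,\rho}$ and $v_\varepsilon \to v$ in $X_{p,s,\rho}$.

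The main obstacle is therefore establishing this commutation identity. My plan is to test both sides against $\phi \in \mathcal{D}(\mathbb{R}^d)$ and perform two Fubini exchanges, exploiting translation invariance at the test-function level. Starting from $\int v_\varepsilon(x)\,(-\Delta)^s(\phi)(x)\,\mathrm{d}x$, a first Fubini produces $\int v(y)\,[(-\Delta)^s(\phi)]_\varepsilon(y)\,\mathrm{d}y$; the pointwise identity $[(-\Delta)^s(\phi)]_\varepsilon = (-\Delta)^s(\phi_\varepsilon)$ is immediate from the principal-value definition of $(-\Delta)^s$ since $\eta_\varepsilon \in \mathcal{D}(\mathbb{R}^d)$, and since $\phi_\varepsilon \in \mathcal{D}(\mathbb{R}^d)$ the distributional definition applied to $v$ transforms the integral into $\int (-\Delta)^s(v)(y)\,\phi_\varepsilon(y)\,\mathrm{d}y$; a final Fubini collapses this to $\int \phi(x)\,[(-\Delta)^s(v)]_\varepsilon(x)\,\mathrm{d}x$. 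The absolute-integrability checks required to apply Fubini follow from Theorem \ref{lem: lemma-moll} (bounds on $|v|_\varepsilon$ in $\LL^p_\rho$ and on $|(-\Delta)^s(v)|_\varepsilon$ in $\LL^{p^\prime}_{\rho^\prime}$) combined with (a) and a further H\"older estimate, concluding the proof.
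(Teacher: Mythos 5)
Your proposal is correct and follows essentially the same route as the paper: parts (a), (b), (d) via the decay of $(-\Delta)^s(\phi)$, completeness/reflexivity of the two weighted Lebesgue spaces, and H\"older, and part (c) via mollification, the commutation identity $(-\Delta)^s(v_\varepsilon)=[(-\Delta)^s(v)]_\varepsilon$ established by testing against $\phi\in\mathcal{D}(\mathbb{R}^d)$ with two Fubini exchanges, and a second application of Theorem \ref{lem: lemma-moll} to $(-\Delta)^s(v)\in \LL^{p^\prime}_{\rho^\prime}(\mathbb{R}^d)$. The only difference is cosmetic: you make explicit the intermediate step $[(-\Delta)^s(\phi)]_\varepsilon=(-\Delta)^s(\phi_\varepsilon)$ at the test-function level, which the paper leaves implicit.
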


We finally establish some crucial integral estimates for functions in $ X_{p,s,\rho} $. 
\begin{lem} \label{lem: stime-integrali-cutoff}
Let either $ p=2 $ and $ \gamma \in [0,d+2s] $ or $ p \in (2,\infty) $ and $ \gamma \in [0,d+ps) $. Suppose that $ \rho $ satisfies \eqref{eq: rho-two-powers-intro} for some $ \gamma_0 \in [0,d) $ and for such a $ \gamma $. Let $ l_s $ be as in Lemma \ref{lem:decay-lap-2} and $ \xi , \xi_R$ be as in Lemma \ref{lem:decay-lap-cutoff}. Let $v_1,v_2 \in C^\infty(\mathbb{R}^d) \cap X_{p,s,\rho} $. Then the integral
\begin{equation}\label{eq: def-IR}
I_R\left( v_{i} \right) := \int_{\mathbb{R}^d} \xi_R^2(x) \left( \int_{\mathbb{R}^d}  \frac{\left( v_{i}(x) - v_{i}(y) \right)^2}{|x-y|^{d+2s}}  \, \mathrm{d}y \right) \mathrm{d}x \ \ \ i=1,2
\end{equation}
is finite for all $R>0$. Moreover, the following estimates hold for all $ R \ge 1 $:
\begin{equation}\label{eq: resto-L2-L2}
\int_{\mathbb{R}^d} \left|  v_1(x) v_2(x) \, \xi_R(x)  (-\Delta)^s (\xi_R)(x) \right| \mathrm{d}x \le \mathcal{T}(R,v_1,v_2) \, ,
\end{equation}
\begin{equation}\label{eq: resto-L2-l}
\int_{\mathbb{R}^d} \left| v_1(y) v_2(y) \right| \left( \int_{\mathbb{R}^d} \frac{\left( \xi_R(x) - \xi_R(y) \right)^2}{|x-y|^{d+2s}} \, \mathrm{d}x \right) \mathrm{d}y \le \mathcal{T}(R,v_1,v_2) \, ,
\end{equation}
\begin{equation}\label{eq: resto-IR-L2}
\int_{\mathbb{R}^d} \int_{\mathbb{R}^d} \left| \xi_R(x) v_1(x) \, \frac{\left(v_2(x) - v_2(y)  \right) \left( \xi_R(x) - \xi_R(y) \right)}{|x-y|^{d+2s}} \right| \mathrm{d}x \mathrm{d}y \le \left[\mathcal{T}(R,v_1,v_1)\right]^{\frac12} \left[ I_R(v_2) \right]^{\frac12} ,
\end{equation}
\begin{equation}\label{eq: resto-IR-L2-bis}
\int_{\mathbb{R}^d} \int_{\mathbb{R}^d} \left| \xi_R(x) v_1(y) \, \frac{\left(v_2(x) - v_2(y)  \right) \left( \xi_R(x) - \xi_R(y) \right)}{|x-y|^{d+2s}} \right| \mathrm{d}x \mathrm{d}y \le \left[\mathcal{T}(R,v_1,v_1)\right]^{\frac12} \left[ I_R(v_2) \right]^{\frac12} ,
\end{equation}
where
\begin{equation}\label{eq: def-code}
\begin{aligned}
& \mathcal{T}(R,v_1,v_2) \\
:= & K \left[ R^{-2s} \, \alpha(R)^{\frac{2\gamma+(p-2)d}{p}} \left\| v_1 \right\|_{p,\rho} \left\| v_2 \right\|_{p,\rho} + R^{-\frac{2d+p(2s-d)-2\gamma}{p}} \left\| v_1 \right\|_{\LL^p_\rho\left( B_{\alpha(R)}^c \right)} \left\| v_2 \right\|_{\LL^p_\rho\left( B_{\alpha(R)}^c \right)} \right] ,
\end{aligned}
\end{equation}
$ \alpha:[1,\infty) \mapsto [1,\infty) $ being any monotone function of $R$ with $ \lim_{R \to \infty} \alpha(R) = \infty $ and $ K$ being a suitable positive constant that depends only on $ d,s,p,\gamma_0,\gamma,c,\xi $.
\begin{proof}
In order to show that $I_R(v_i)$ is finite for all $ R>0 $ note that, since $\xi_R$ is supported in $B_{2R}$, it is enough to prove that the function
\begin{equation*}\label{eq: def-IR-prova-1}
x \mapsto \int_{\mathbb{R}^d}  \frac{\left( v_i(x) - v_i(y) \right)^2}{|x-y|^{d+2s}}  \, \mathrm{d}y
\end{equation*}
stays bounded as $x$ varies in $B_{2R}$. To this end one can proceed exactly as we did in establishing the local boundedness of \eqref{eq: classical-phi-a-p}. We point out that here it is necessary to ask $ \gamma < d+ps $ in the case $ p>2 $.

Now let us deal with estimates \eqref{eq: resto-L2-L2}--\eqref{eq: resto-IR-L2-bis}. Since $\| \xi \|_\infty = 1$, by using \eqref{eq: finit-s-lap-R-q} with $ q=\tfrac{p}{p-2} $, \eqref{eq: finit-s-lap-R} with $ \gamma=0 $, the left-hand inequalities in \eqref{eq: rho-two-powers-intro}, and exploiting a three-point H\"{o}lder's inequality, we find:
\begin{equation}\label{eq: resto-L2-L2-prova-mod-1}
\begin{aligned}
& \int_{\mathbb{R}^d} \left| v_1(x) v_2(x) \, \xi_R(x) (-\Delta)^s (\xi_R)(x) \right| \mathrm{d}x \\
\le & R^{-2s} \left\| v_1 \right\|_{p,\rho} \left\| v_2 \right\|_{p,\rho} \left\| (-\Delta)^s(\xi) \right\|_{\infty} \left\| \rho^{-{2}/{p}} \right\|_{\LL^{\frac{p}{p-2}}\left({B_{\alpha(R)}}\right)} \\
& + R^{-\frac{2d+p(2s-d)-2\gamma}{p}} \, c^{-\frac2p} \left\| v_1 \right\|_{\LL^p_\rho\left( B_{\alpha(R)}^c \right)} \left\| v_2 \right\|_{\LL^p_\rho\left( B_{\alpha(R)}^c \right)} \left\| |x|^\gamma (-\Delta)^s( \xi) \right\|_{\frac{p}{p-2},-\gamma} \, ,
\end{aligned}
\end{equation}
where for $ p=2 $ it is understood that $ \tfrac{p}{p-2}=\infty $. In view of Lemma \ref{lem: stime-termini-cutoff-2} and of the left-hand inequalities in \eqref{eq: rho-two-powers-intro}, it is apparent that \eqref{eq: resto-L2-L2-prova-mod-1} implies \eqref{eq: resto-L2-L2}. Similarly (use \eqref{eq: finit-l-R-q} and \eqref{eq: finit-l-R} instead), we have:
\begin{equation*}\label{resto-L2-l-prova-mod-1}
\begin{aligned}
\int_{\mathbb{R}^d} \left| v_1(y) v_2(y) \right| l_s(\xi_R)(y) \, \mathrm{d}y \le & R^{-2s} \left\| v_1 \right\|_{p,\rho} \left\| v_2 \right\|_{p,\rho} \left\| l_s(\xi) \right\|_{\infty} \left\| \rho^{-{2}/{p}} \right\|_{\LL^{\frac{p}{p-2}}\left({B_{\alpha(R)}}\right)} \\
& + \frac{\left\| v_1 \right\|_{\LL^p_\rho\left( B_{\alpha(R)}^c \right)} \left\| v_2 \right\|_{\LL^p_\rho\left( B_{\alpha(R)}^c \right)} \left\| |x|^\gamma l_s( \xi) \right\|_{\frac{p}{p-2},-\gamma}} {R^{\frac{2d+p(2s-d)-2\gamma}{p}} \, c^{\frac2p} }
\end{aligned}
\end{equation*}
and 
\begin{equation*}\label{eq: resto-IR-L2-1}
\begin{aligned}
& \int_{\mathbb{R}^d} \int_{\mathbb{R}^d} \left| \xi_R(x) v_1(x) \, \frac{\left(v_2(x) - v_2(y)  \right) \left( \xi_R(x) - \xi_R(y) \right)}{|x-y|^{d+2s}} \right| \mathrm{d}x \mathrm{d}y \\
\le &  \left( \frac{\left\| v_1 \right\|_{p,\rho}^2 \left\| l_s(\xi) \right\|_{\infty} \left\| \rho^{-{2}/{p}} \right\|_{\LL^{\frac{p}{p-2}}\left({B_{\alpha(R)}}\right)} }{R^{2s}} + \frac{\left\| v_1 \right\|_{\LL^p_\rho\left( B_{\alpha(R)}^c \right)}^2 \left\| |x|^\gamma l_s(\xi) \right\|_{\frac{p}{p-2},-\gamma}}{R^{\frac{2d+p(2s-d)-2\gamma}{p}} \, c^{\frac2p}} \right)^{\frac12} \left[ I_R(v_2)  \right]^{\frac12} \, ,
\end{aligned}
\end{equation*}
whence \eqref{eq: resto-L2-l} and \eqref{eq: resto-IR-L2}. The proof of \eqref{eq: resto-IR-L2-bis} is completely analogous.
\end{proof}
\end{lem}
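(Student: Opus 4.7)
The strategy is to deal with the finiteness of $I_R(v_i)$ first, then prove the two single-integral bounds \eqref{eq: resto-L2-L2} and \eqref{eq: resto-L2-l} by splitting the integration domain at scale $\alpha(R)$, and finally to deduce the double-integral bounds \eqref{eq: resto-IR-L2}--\eqref{eq: resto-IR-L2-bis} via a Cauchy--Schwarz decomposition of the kernel.

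Regarding the finiteness of $I_R(v_i)$, since $\xi_R^2$ is supported in $B_{2R}$ it suffices to bound $l_s(v_i)(x)$ uniformly for $x \in B_{2R}$. This can be done exactly as for the local boundedness of \eqref{eq: classical-phi-a-p} in the proof of Lemma \ref{lem: laplaciano-classico-laplaciano-distrib}: one splits the $y$-integral on $B_{4R}$ (where smoothness of $v_i$ gives the Taylor-type bound $(v_i(x)-v_i(y))^2 \le C |x-y|^2$, integrable against $|x-y|^{-(d+2s)}$ near the diagonal) and on $B_{4R}^c$ (where $(v_i(x)-v_i(y))^2 \le 2 v_i^2(x)+2 v_i^2(y)$ is combined with H\"{o}lder's inequality against $|y|^{-(d+2s)}$ and the weighted integrability of $v_i$ at infinity). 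The hypothesis $\gamma < d+ps$ when $p>2$ (respectively $\gamma \le d+2s$ when $p=2$) is precisely what is needed to close this last estimate.

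The single-integral bounds \eqref{eq: resto-L2-L2} and \eqref{eq: resto-L2-l} are structurally identical --- one features $(-\Delta)^s(\xi_R)$, the other $l_s(\xi_R)$ --- so they are treated in parallel. The idea is to split the $x$-integration on $B_{\alpha(R)}$ and $B_{\alpha(R)}^c$. On the inner region one applies the scalings \eqref{eq: finit-s-lap-R} and \eqref{eq: finit-l-R} with $\gamma=0$, yielding the bound $\| (-\Delta)^s(\xi_R) \|_\infty \le R^{-2s} \| (-\Delta)^s(\xi) \|_\infty$ (and the analogue for $l_s$); a three-factor H\"{o}lder inequality with exponents $(p,p,\tfrac{p}{p-2})$ applied to $v_1 \cdot v_2 \cdot \rho^{-2/p}$ then produces the norms $\| v_i \|_{p,\rho}$ together with $\| \rho^{-2/p} \|_{\LL^{p/(p-2)}(B_{\alpha(R)})}$, the latter being controlled by $\alpha(R)^{[2\gamma+(p-2)d]/p}$ via the upper bound on $\rho$ at infinity in \eqref{eq: rho-two-powers-intro} (with the natural convention $\tfrac{p}{p-2}=\infty$ when $p=2$). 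On the outer region one invokes the weighted scalings \eqref{eq: finit-s-lap-R-q} and \eqref{eq: finit-l-R-q} with $q=\tfrac{p}{p-2}$, supplying the factor $R^{-[2d+p(2s-d)-2\gamma]/p}$; the lower bound on $\rho$ at infinity in \eqref{eq: rho-two-powers-intro} is then used to absorb $|x|^{-\gamma}$ into $\rho^{-1}$, and a further H\"{o}lder step brings in the norms $\| v_i \|_{\LL^p_\rho(B_{\alpha(R)}^c)}$. Summing the two contributions yields \eqref{eq: def-code}.

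The double-integral bounds \eqref{eq: resto-IR-L2}--\eqref{eq: resto-IR-L2-bis} then follow at once from \eqref{eq: resto-L2-l} via the Cauchy--Schwarz factorisation
\[
\xi_R(x) v_1(x) \, \frac{(v_2(x)-v_2(y))(\xi_R(x)-\xi_R(y))}{|x-y|^{d+2s}} = \left[ v_1(x) \, \frac{\xi_R(x)-\xi_R(y)}{|x-y|^{(d+2s)/2}} \right] \cdot \left[ \xi_R(x) \, \frac{v_2(x)-v_2(y)}{|x-y|^{(d+2s)/2}} \right] ,
\]
followed by Cauchy--Schwarz in $\mathrm{d}x \mathrm{d}y$. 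The $\LL^2$ norm of the second bracket is precisely $[I_R(v_2)]^{1/2}$; that of the first equals $\big( \int v_1^2(x) l_s(\xi_R)(x) \, \mathrm{d}x \big)^{1/2}$, which by \eqref{eq: resto-L2-l} is bounded by $[\mathcal{T}(R,v_1,v_1)]^{1/2}$. For \eqref{eq: resto-IR-L2-bis} one replaces $v_1(x)$ by $v_1(y)$ in the first bracket and exploits the symmetry of $l_s(\xi_R)$ in $(x,y)$ to reach the same estimate. The main bookkeeping hurdle will be to verify that the exponents in the two regions combine exactly into \eqref{eq: def-code}, which is designed so that the later optimal choice of $\alpha(R)$ makes $\mathcal{T}(R,v_1,v_2) \to 0$ as $R\to\infty$ in the subcritical-critical range $\gamma \le d - \tfrac{p}{2}(d-2s)$.
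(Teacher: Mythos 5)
Your proposal follows essentially the same route as the paper's proof: the same near/far splitting for the finiteness of $I_R(v_i)$, the same decomposition of the integration domain at scale $\alpha(R)$ combined with the scaling identities of Lemma \ref{lem: stime-termini-cutoff-2} and a three-factor H\"older inequality for the single-integral bounds, and the same Cauchy--Schwarz factorisation of the kernel (with the symmetry of $l_s(\xi_R)$ for the second variant) for the double-integral bounds. The only slip is terminological: bounding $\rho^{-2/p}$ from above on $B_{\alpha(R)}$ uses the \emph{lower} bounds $\rho \ge c\,|x|^{-\gamma_0}$, $\rho \ge c\,|x|^{-\gamma}$ in \eqref{eq: rho-two-powers-intro} (the left-hand inequalities, as the paper states), not the upper one.
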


\section{Subcritical-critical powers}\label{sect: SAIP}

This section is devoted to the proof of our main results, namely the validity of \eqref{eq: parti-intro} in $ X_{p,s,\rho} $ and a consequent self-adjointness property for the operator $ \rho^{-1} (-\Delta)^s $ in $ \LL^2_\rho(\mathbb{R}^d) $, under the additional assumption that $ \rho $ satisfies \eqref{eq: rho-two-powers-intro} for some $ \gamma $ smaller than or equal to the critical value $ d - \tfrac{p}{2}(d-2s) $. Note that such assumption is restrictive only in the case $d>2s$. To our purposes, we first need a preliminary continuous-embedding result.

\begin{lem} \label{lem: iniezione-continua}
Let either $ d \le 2s $ and $ p \in [2,\infty) $ or $ d>2s $ and $ p \in \left[ 2, {2d}/{(d-2s)} \right] $. Suppose that $ \rho $ satisfies \eqref{eq: rho-two-powers-intro} for some $ \gamma_0 \in [0,d) $ and $ \gamma \in \left[0, d - \tfrac{p}{2}(d-2s) \right] $. Then $X_{p,s,\rho}$ is continuously embedded in $\dot{\HH}^s(\mathbb{R}^d)$ and the following inequality holds:
\begin{equation}\label{eq: iniezione-continua-Hs}
\left\|  v  \right\|_{\dot{\HH}^s(\mathbb{R}^d)}^2 = \frac{C_{d,s}}{2} \int_{\mathbb{R}^d} \int_{\mathbb{R}^d} \frac{(v(x)-v(y))^2}{|x-y|^{d+2s}} \, \mathrm{d}x \mathrm{d}y \le \int_{\mathbb{R}^d} v(x) \, (-\Delta)^s (v)(x) \, \mathrm{d}x \ \ \ \forall v \in X_{p,s,\rho} \, .
\end{equation}
\begin{proof}
We shall first prove \eqref{eq: iniezione-continua-Hs} for the elements of a sequence $ \{ v_n \} \subset C^\infty(\mathbb{R}^d) \cap X_{p,s,\rho} $ converging to $v$ in $ X_{p,s,\rho} $, which exists in view of Proposition \ref{lem: densita-s}--\eqref{X-c}, and then pass to the limit as $ n \to \infty $. To this end, take a family of cut-off functions $\{ \xi_R \}_{R \ge 1} $ as in Lemma \ref{lem:decay-lap-cutoff}. Since $ \xi_R v_n $ belongs to $C^\infty_c(\mathbb{R}^d)$, identity \eqref{eq: parti-intro} with $ v=w=\xi_R v_n $ holds, that is
\begin{equation}\label{eq: lapll-cut-off-prodotto-integrale}
\begin{aligned}
\left\|  \xi_R v_n  \right\|_{\dot{\HH}^s(\mathbb{R}^d)}^2 
 = & \int_{\mathbb{R}^d} \xi_R^2(x) \, v_n(x) \, (-\Delta)^s (v_n)(x) \, \mathrm{d}x + \int_{\mathbb{R}^d} \xi_R(x) \, (-\Delta)^s(\xi_R)(x) \, v_n^2(x) \, \mathrm{d}x \\
& + 2 \, C_{d,s} \int_{\mathbb{R}^d} \int_{\mathbb{R}^d} \xi_R(x) v_n(x) \, \frac{\left(v_n(x) - v_n(y) \right) \left( \xi_R(x) - \xi_R(y) \right)}{|x-y|^{d+2s}} \, \mathrm{d}x \mathrm{d}y
\end{aligned}
\end{equation}
(recall the product formula \eqref{eq: lapll-cut-off-prodotto-phi}). The l.h.s.~of \eqref{eq: lapll-cut-off-prodotto-integrale} reads
\begin{equation}\label{eq: lapll-cut-off-prodotto-Hs}
\begin{aligned}
2 \left\| \xi_R v_n \right\|_{\dot{\HH}^s(\mathbb{R}^d)}^2
= & C_{d,s} \int_{\mathbb{R}^d} \xi_R^2(x) \left( \int_{\mathbb{R}^d}  \frac{\left( v_n(x) - v_n(y) \right)^2}{|x-y|^{d+2s}}  \, \mathrm{d}y \right) \mathrm{d}x \\
 & + C_{d,s} \int_{\mathbb{R}^d} v_n^2(y) \left( \int_{\mathbb{R}^d} \frac{\left( \xi_R(x) - \xi_R(y) \right)^2}{|x-y|^{d+2s}}  \, \mathrm{d}x \right) \mathrm{d}y  \\
 & + 2 \, C_{d,s} \int_{\mathbb{R}^d} \int_{\mathbb{R}^d} \xi_R(x) v_n(y) \, \frac{\left(v_n(x) - v_n(y) \right) \left( \xi_R(x) - \xi_R(y) \right)}{|x-y|^{d+2s}} \, \mathrm{d}x \mathrm{d}y \, .
\end{aligned}
\end{equation}
By exploiting inequality \eqref{eq: resto-IR-L2} from Lemma \ref{lem: stime-integrali-cutoff} with $v_1=v_2=v_n $, and taking advantage of Young's inequality, we estimate the third term in the r.h.s.~of \eqref{eq: lapll-cut-off-prodotto-integrale} as follows:
\begin{equation}\label{eq: stima-terzo-a}
\begin{aligned}
& 2 \, C_{d,s} \int_{\mathbb{R}^d} \int_{\mathbb{R}^d} \left| \xi_R(x) v_n(x) \, \frac{\left(v_n(x) - v_n(y)  \right) \left( \xi_R(x) - \xi_R(y) \right) }{|x-y|^{d+2s}} \right| \mathrm{d}x \mathrm{d}y \\
\le & \delta \, C_{d,s} \, I_R(v_n) + \delta^{-1} \, C_{d,s} \, \mathcal{T}(R,v_n,v_n)
\end{aligned}
\end{equation}
for all $ \delta>0 $, where $ \mathcal{T} $ is defined by \eqref{eq: def-code}. The same can be done for the third term in the r.h.s.\ of \eqref{eq: lapll-cut-off-prodotto-Hs} upon using \eqref{eq: resto-IR-L2-bis}:
\begin{equation}\label{eq: stima-terzo-b}
\begin{aligned}
 & 2 \, C_{d,s} \int_{\mathbb{R}^d} \int_{\mathbb{R}^d} \left| \xi_R(x) v_n(y) \, \frac{\left(v_n(x) - v_n(y)  \right) \left( \xi_R(x) - \xi_R(y) \right) }{|x-y|^{d+2s}} \right| \mathrm{d}x \mathrm{d}y \\
\le & \delta \, C_{d,s} \, I_R(v_n) + \delta^{-1} \, C_{d,s} \, \mathcal{T}(R,v_n,v_n) \, .
\end{aligned}
\end{equation}
Thanks to \eqref{eq: resto-L2-L2} and \eqref{eq: resto-L2-l} with $v_1=v_2=v_n$ we can also estimate the second term in the r.h.s.\ of \eqref{eq: lapll-cut-off-prodotto-integrale} and the second term in the r.h.s.\ of \eqref{eq: lapll-cut-off-prodotto-Hs}:
\begin{equation}\label{eq: stima-secondo-a}
\int_{\mathbb{R}^d} \left| \xi_R(x) \, (-\Delta)^s (\xi_R)(x) \, v_n^2(x) \right| \mathrm{d}x \le \mathcal{T}(R,v_n,v_n) \, ,
\end{equation}
\begin{equation}\label{eq: stima-secondo-b}
C_{d,s} \int_{\mathbb{R}^d} v_n^2(y) \left( \int_{\mathbb{R}^d}  \frac{\left( \xi_R(x) - \xi_R(y) \right)^2}{|x-y|^{d+2s}} \, \mathrm{d}x \right) \mathrm{d}y \le C_{d,s} \, \mathcal{T}(R,v_n,v_n) \, .
\end{equation} 
By combining  \eqref{eq: lapll-cut-off-prodotto-integrale}, \eqref{eq: stima-terzo-a} and \eqref{eq: stima-secondo-a} we thus infer that
\begin{equation}\label{eq: lapll-cut-off-prodotto-integrale-epsilon}
\begin{aligned}
& \left| \int_{\mathbb{R}^d} \xi_R(x) v_n(x) \, (-\Delta)^s ( \xi_R v_n )(x) \, \mathrm{d}x - \int_{\mathbb{R}^d} \xi_R^2(x) \, v_n(x) \, (-\Delta)^s(v_n)(x)  \, \mathrm{d}x \right| \\
\le & \delta \, C_{d,s} \, I_R(v_n) + \left( \delta^{-1}\,C_{d,s}+1 \right) \mathcal{T}(R,v_n,v_n) \, ,
\end{aligned}
\end{equation}
where $ I_R $ is defined by \eqref{eq: def-IR}. Similarly, by gathering \eqref{eq: lapll-cut-off-prodotto-Hs}, \eqref{eq: stima-terzo-b} and \eqref{eq: stima-secondo-b} we get
\begin{equation}\label{eq: lapll-cut-off-prodotto-Hs-epsilon}
\left| 2 \left\| \xi_R v_n \right\|_{\dot{\HH}^s(\mathbb{R}^d)}^2 - C_{d,s} \, I_R(v_n) \right| 
\le \delta \, C_{d,s} \, I_R(v_n) + \left( \delta^{-1} + 1 \right) C_{d,s} \, \mathcal{T}(R,v_n,v_n) \, .
\end{equation}
Hence, \eqref{eq: lapll-cut-off-prodotto-integrale}, \eqref{eq: lapll-cut-off-prodotto-Hs}, \eqref{eq: lapll-cut-off-prodotto-integrale-epsilon} and \eqref{eq: lapll-cut-off-prodotto-Hs-epsilon} yield
\begin{equation*}\label{eq: quasi-immersione-epsilon-R}
\begin{aligned}
C_{d,s} \, I_R(v_n) 
\le & 2 \int_{\mathbb{R}^d} \xi_R^2(x) \, v_n(x) \, (-\Delta)^s(v_n)(x) \, \mathrm{d}x + 3 \, \delta \, C_{d,s} \, I_R(v_n) \\
& + \left( 3 \, \delta^{-1} \, C_{d,s} + C_{d,s} + 2 \right) \mathcal{T}(R,v_n,v_n) \, , 
\end{aligned}
\end{equation*}
that is
\begin{equation}\label{eq: quasi-immersione-epsilon-R-bis}
\frac{1-3\delta}{2} \, C_{d,s} \, I_R(v_n) \le \int_{\mathbb{R}^d} \xi_R^2(x) \, v_n(x) \, (-\Delta)^s(v_n)(x) \, \mathrm{d}x  + \frac{3 \, \delta^{-1} \, C_{d,s} + C_{d,s} + 2}{2} \, \mathcal{T}(R,v_n,v_n) \, .
\end{equation}
It is straightforward to verify that, in view of the hypotheses on $ \gamma $, there holds
\begin{equation*}\label{eq: choice-alpha-vanish}
\lim_{R \to \infty}  \mathcal{T}(R,v_n,v_n) = 0
\end{equation*}
provided $ \alpha(R)=o(R) $ as $ R \to \infty $.
Hence, for any fixed $\delta \in (0,1/3) $, we can pass to the limit in \eqref{eq: quasi-immersione-epsilon-R-bis} as $R \to \infty$ to get, by means \emph{e.g.}~of Fatou's Lemma and dominated convergence, 
\begin{equation}\label{eq: quasi-immersione-epsilon}
\frac{1-3\delta}{2} \, C_{d,s} \int_{\mathbb{R}^d} \int_{\mathbb{R}^d} \frac{(v_n(x)-v_n(y))^2}{|x-y|^{d+2s}}  \, \mathrm{d}x \mathrm{d}y \le \int_{\mathbb{R}^d} v_n(x) \, (-\Delta)^s(v_n)(x) \, \mathrm{d}x \, .
\end{equation}  
Note that the r.h.s.\ of \eqref{eq: quasi-immersione-epsilon} is finite thanks to Proposition \ref{lem: densita-s}--\eqref{X-d}. By letting $\delta \to 0$ we therefore end up with \eqref{eq: iniezione-continua-Hs} for $ v=v_n $; the fact that $v_n$ belongs to $\dot{\HH}^s(\mathbb{R}^d)$ is a consequence of the boundedness of the family $\{ \xi_R v_n \}_{R\ge1}$ in $\dot{\HH}^s(\mathbb{R}^d)$ (we leave it to the reader to check this assertion). A further passage to the limit as $ n \to \infty $ yields the result.
%
\end{proof}
\end{lem}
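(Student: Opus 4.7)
The plan is to invoke the density result Proposition \ref{lem: densita-s}--\eqref{X-c} to reduce to smooth elements of $X_{p,s,\rho}$, then apply a cut-off approximation $\xi_R v_n$ (with $\xi_R$ as in Lemma \ref{lem:decay-lap-cutoff}), and finally pass to two successive limits, first $R\to\infty$ and then $n\to\infty$. Since $\xi_R v_n \in \mathcal{D}(\mathbb{R}^d)$, the classical integration-by-parts formula \eqref{eq: parti-intro} is already available for it; the idea is to rework it into a quantitative inequality for $v_n$, with remainder terms controlled by Lemma \ref{lem: stime-integrali-cutoff}.

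Concretely, I would apply \eqref{eq: parti-intro} with $v=w=\xi_R v_n$, expand $(-\Delta)^s(\xi_R v_n)$ via the product formula \eqref{eq: lapll-cut-off-prodotto-phi} on the one side, and rewrite $\|\xi_R v_n\|_{\dot{\HH}^s(\mathbb{R}^d)}^2$ via the double-integral representation \eqref{eq: equiv-nonloc} after splitting the square $[(\xi_R v_n)(x)-(\xi_R v_n)(y)]^2$ on the other. In both expansions the principal term is the one where $\xi_R$ appears squared and $v_n$ is untouched---namely $\int \xi_R^2\, v_n(-\Delta)^s v_n\,\mathrm{d}x$ on the right and $(C_{d,s}/2)\,I_R(v_n)$ on the left---while every other term is of the type covered by Lemma \ref{lem: stime-integrali-cutoff}. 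The mixed double-integral containing the product $(\xi_R(x)-\xi_R(y))(v_n(x)-v_n(y))$ should be handled by Young's inequality with a small parameter $\delta>0$, splitting it as $\delta\,I_R(v_n)+\delta^{-1}\mathcal{T}(R,v_n,v_n)$, so that the $\delta\,I_R(v_n)$ piece can later be absorbed on the left-hand side.

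The main obstacle is making $\mathcal{T}(R,v_n,v_n)\to 0$ as $R\to\infty$: this is exactly where the subcritical-critical hypothesis $\gamma\le d-\tfrac{p}{2}(d-2s)$ enters. Inspecting \eqref{eq: def-code}, the second summand vanishes in $R$ precisely under this hypothesis, while the first can be made to vanish by selecting an auxiliary scale $\alpha(R)\to\infty$ growing slowly enough that $\alpha(R)^{[2\gamma+(p-2)d]/p}=o(R^{2s})$, which is compatible with the admissible range of $p$ and $\gamma$. Once this is ensured, fixing $\delta\in(0,1/3)$ I can pass to the limit $R\to\infty$: Fatou's Lemma applied to the nonnegative double integrand $\xi_R^2(x)(v_n(x)-v_n(y))^2/|x-y|^{d+2s}$ yields $\liminf_R I_R(v_n)\ge 2\|v_n\|_{\dot{\HH}^s(\mathbb{R}^d)}^2/C_{d,s}$, while dominated convergence (together with the continuity of $\mathcal{B}$ from Proposition \ref{lem: densita-s}--\eqref{X-d}) handles $\int\xi_R^2 v_n(-\Delta)^s v_n\,\mathrm{d}x\to\int v_n(-\Delta)^s v_n\,\mathrm{d}x$. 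Sending $\delta\to 0$ then gives \eqref{eq: iniezione-continua-Hs} for $v_n$.

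A final passage to the limit $n\to\infty$ closes the argument: on the right-hand side the continuity of the bilinear form $\mathcal{B}$ gives convergence to $\int v(-\Delta)^s v\,\mathrm{d}x$, while on the left-hand side, after extracting a subsequence with $v_n\to v$ a.e.\ (which is available from $X_{p,s,\rho}$-convergence via $\LL^p_\rho$-convergence), Fatou's Lemma on the double integral preserves the inequality and simultaneously shows that $v\in\dot{\HH}^s(\mathbb{R}^d)$, yielding the continuous embedding.
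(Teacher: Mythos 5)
Your proposal is correct and follows essentially the same route as the paper's proof: density of $C^\infty(\mathbb{R}^d)\cap X_{p,s,\rho}$, the cut-off identity for $\xi_R v_n$ expanded via the product formula and the double-integral representation, Young's inequality with a small $\delta$ to absorb the $I_R(v_n)$ contribution, the remainder estimates of Lemma \ref{lem: stime-integrali-cutoff} with $\alpha(R)=o(R)$, and the successive limits $R\to\infty$, $\delta\to 0$, $n\to\infty$ via Fatou and the continuity of $\mathcal{B}$. No substantive differences to report.
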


We are now in position to prove the validity of the integration-by-parts formulas in the case where $ \gamma $ is at most critical.
\begin{proof}[Proof of Theorem \ref{thm: global} (case $  \gamma \le d - \tfrac{p}{2}(d-2s) $)]
We shall proceed along the lines of proof of Lemma \ref{lem: iniezione-continua}, \emph{i.e.}~we start from the validity of the identity
\begin{equation}\label{eq: identita-varepsilon-R}
\left\langle \xi_R v_n , \xi_R w_n \right\rangle_{\dot{\HH}^s(\mathbb{R}^d)} = \int_{\mathbb{R}^d} \xi_R(x) v_n(x) \, (-\Delta)^s (\xi_R w_n)(x) \, \mathrm{d}x \, ,
\end{equation}
where $ \{ v_n \} , \{ w_n \}  \subset C^\infty(\mathbb{R}^d)\cap X_{p,s,\rho} $ are sequences converging to $ v $ and $w$, respectively, in $ X_{p,s,\rho} $. The analogues of \eqref{eq: lapll-cut-off-prodotto-integrale} and \eqref{eq: lapll-cut-off-prodotto-Hs} read
\begin{equation}\label{eq: lapll-cut-off-prodotto-integrale-fg}
\begin{aligned}
& \left\langle \xi_R v_n , \xi_R w_n \right\rangle_{\dot{\HH}^s(\mathbb{R}^d)} \\
= & \int_{\mathbb{R}^d} \xi_R(x) v_n(x) \, (-\Delta)^s ( \xi_R w_n )(x) \, \mathrm{d}x \\
= & \int_{\mathbb{R}^d} \xi_R^2(x) \, v_n(x) \, (-\Delta)^s(w_n)(x) \, \mathrm{d}x + \int_{\mathbb{R}^d} \xi_R(x) \, (-\Delta)^s(\xi_R)(x) \, v_n(x) w_n(x) \, \mathrm{d}x \\
& + 2 \, C_{d,s} \int_{\mathbb{R}^d} \int_{\mathbb{R}^d} \xi_R(x) v_n(x) \, \frac{\left(w_n(x) - w_n(y) \right) \left( \xi_R(x) - \xi_R(y) \right) }{|x-y|^{d+2s}} \, \mathrm{d}x \mathrm{d}y 
\end{aligned}
\end{equation}
and
\begin{equation}\label{eq: lapll-cut-off-prodotto-Hs-fg}
\begin{aligned}
2 \left\langle \xi_R v_n , \xi_R w_n \right\rangle_{\dot{\HH}^s(\mathbb{R}^d)}
= & C_{d,s} \int_{\mathbb{R}^d} \xi_R^2(x) \left( \int_{\mathbb{R}^d}  \frac{\left( v_n(x) - v_n(y) \right)\left(w_n(x) - w_n(y) \right)}{|x-y|^{d+2s}} \, \mathrm{d}y \right) \mathrm{d}x \\
& + C_{d,s} \int_{\mathbb{R}^d} v_n(y) w_n(y) \left( \int_{\mathbb{R}^d}  \frac{\left(\xi_R(x) - \xi_R(y)\right)^2}{|x-y|^{d+2s}}  \, \mathrm{d}x \right) \mathrm{d}y  \\
& + C_{d,s} \int_{\mathbb{R}^d} \int_{\mathbb{R}^d} \xi_R(x) v_n(y) \, \frac{\left(w_n(x) - w_n(y) \right) \left( \xi_R(x) - \xi_R(y) \right) }{|x-y|^{d+2s}}  \, \mathrm{d}x \mathrm{d}y  \\
& + C_{d,s} \int_{\mathbb{R}^d} \int_{\mathbb{R}^d} \xi_R(x) w_n(y) \, \frac{\left(v_n(x) - v_n(y)  \right) \left( \xi_R(x) - \xi_R(y) \right) }{|x-y|^{d+2s}}  \, \mathrm{d}x \mathrm{d}y \, .
\end{aligned}
\end{equation} 
By exploiting Lemma \ref{lem: iniezione-continua} together with the trivial inequality
\begin{equation*}\label{eq: stima-IR-Hs}
C_{d,s} \, I_R(v) \le 2 \left\| v \right\|_{\dot{\HH}^s(\mathbb{R}^d)}^2 \ \ \ \forall v \in X_{p,s,\rho} \, ,
\end{equation*}
we obtain:
\begin{equation}\label{eq: stima-terzo-a-fg}
\begin{aligned}
& 2 \, C_{d,s} \int_{\mathbb{R}^d} \int_{\mathbb{R}^d} \left| \xi_R(x) v_n(x) \, \frac{\left(w_n(x) - w_n(y) \right) \left( \xi_R(x) - \xi_R(y) \right)}{|x-y|^{d+2s}} \right| \mathrm{d}x \mathrm{d}y \\
\le & 2 \sqrt{2} \, C_{d,s}^{\frac12} \left\| w_n \right\|_{\dot{\HH}^s(\mathbb{R}^d)} \left[\mathcal{T}(R,v_n,v_n)\right]^{\frac12} ,
\end{aligned}
\end{equation}
\smallskip
\begin{equation}\label{eq: stima-terzo-b-fg-1}
\begin{aligned}
& C_{d,s} \int_{\mathbb{R}^d} \int_{\mathbb{R}^d} \left| \xi_R(x) v_n(y) \, \frac{\left(w_n(x) - w_n(y) \right) \left( \xi_R(x) - \xi_R(y) \right) }{|x-y|^{d+2s}} \right| \mathrm{d}x \mathrm{d}y \\
\le & \sqrt{2} \, C_{d,s}^{\frac12} \left\| w_n \right\|_{\dot{\HH}^s(\mathbb{R}^d)} \left[\mathcal{T}(R,v_n,v_n)\right]^{\frac12} ,
\end{aligned}
\end{equation}
\smallskip
\begin{equation}\label{eq: stima-terzo-b-fg-2}
\begin{aligned}
& C_{d,s} \int_{\mathbb{R}^d} \int_{\mathbb{R}^d} \left| \xi_R(x) w_n(y) \, \frac{\left(v_n(x) - v_n(y) \right) \left( \xi_R(x) - \xi_R(y) \right) }{|x-y|^{d+2s}} \right| \mathrm{d}x \mathrm{d}y \\
\le & \sqrt{2} \, C_{d,s}^{\frac12} \left\| v_n \right\|_{\dot{\HH}^s(\mathbb{R}^d)} \left[\mathcal{T}(R,w_n,w_n)\right]^{\frac12} ,
\end{aligned}
\end{equation}
\smallskip
\begin{equation}\label{eq: stima-secondo-a-fg}
\int_{\mathbb{R}^d} \left| \xi_R(x) \, (-\Delta)^s (\xi_R)(x) \, v_n(x) w_n(x) \right| \mathrm{d}x \le \mathcal{T}(R,v_n,w_n) \, ,
\end{equation}
\begin{equation}\label{eq: stima-secondo-b-fg}
C_{d,s} \int_{\mathbb{R}^d} \left| v_n(y) w_n(y) \right| \left( \int_{\mathbb{R}^d}  \frac{\left( \xi_R(x) - \xi_R(y) \right)^2}{|x-y|^{d+2s}} \, \mathrm{d}x \right) \mathrm{d}y \le C_{d,s} \, \mathcal{T}(R,v_n,w_n) \, .
\end{equation}
By gathering \eqref{eq: identita-varepsilon-R}--\eqref{eq: lapll-cut-off-prodotto-Hs-fg} and \eqref{eq: stima-terzo-a-fg}--\eqref{eq: stima-secondo-b-fg}, we thus deduce the estimate
\begin{equation}\label{eq: stima-termini-finale-a}
\begin{aligned}
& \left| \frac{C_{d,s}}{2} \int_{\mathbb{R}^d} \xi_R^2(x) \left( \int_{\mathbb{R}^d} \frac{\left( v_n(x) - v_n(y) \right)\left( w_n(x) -w_n(y) \right)}{|x-y|^{d+2s}} \, \mathrm{d}y \right) \mathrm{d}x
 \right. \\
& \left. \ - \int_{\mathbb{R}^d} \xi_R^2(x) \, v_n(x) \, (-\Delta)^s(w_n)(x) \, \mathrm{d}x \right| \\
\le & \sqrt{\frac{C_{d,s}}{2}} \left( 5 \left\| w_n \right\|_{\dot{\HH}^s(\mathbb{R}^d)} \left[\mathcal{T}(R,v_n,v_n)\right]^{\frac12} + \left\| v_n \right\|_{\dot{\HH}^s(\mathbb{R}^d)} \left[\mathcal{T}(R,w_n,w_n)\right]^{\frac12} \right) \\
& + \frac{C_{d,s}+2}{2} \, \mathcal{T}(R,v_n,w_n) \, .
\end{aligned}
\end{equation}
Again, any choice of $ \alpha $ such that $ \alpha(R)=o(R) $ as $ R \to \infty $ implies that the r.h.s.~of \eqref{eq: stima-termini-finale-a} vanishes, so that 
by letting $R \to \infty$ we find the identity
\begin{equation}\label{eq: identita-varepsilon}
\left\langle v_n , w_n \right\rangle_{\dot{\HH}^s(\mathbb{R}^d)} = \int_{\mathbb{R}^d} v_n(x) \, (-\Delta)^s(w_n)(x) \, \mathrm{d}x \, .
\end{equation}
Finally, we let $n \to \infty$ and pass to the limit in \eqref{eq: identita-varepsilon} to get \eqref{eq: parti-intro}: Proposition \ref{lem: densita-s} and Lemma \ref{lem: iniezione-continua} ensure that both the left- and the right-hand side are continuous bilinear forms on $ X_{p,s,\rho} $. 
\end{proof}

By taking advantage of Theorem \ref{thm: global} with $p=2$, we are then able to prove Theorem \ref{thm: global-self-adj} under the additional constraint $ \gamma \le 2s $.
\begin{proof}[Proof of Theorem \ref{thm: global-self-adj} (case $ \gamma \le 2s $)]
It is direct to check that $A$ acts from $D(A)$ to $ \LL^2_{\rho}(\mathbb{R}^d)$, since $ A(f) \in \LL^2_{\rho}(\mathbb{R}^d) $ is equivalent to $ (-\Delta)^s(f) \in \LL^2_{\rho^{-1}}(\mathbb{R}^d)$, which is true by the definition of $ X_{s,\rho}$. The fact that $A$ is densely defined just follows from the inclusion $\mathcal{D}(\mathbb{R}^d) \subset X_{s,\rho} $ (Proposition \ref{lem: densita-s}--\eqref{X-a}). The nonnegativity of $A$ is a trivial consequence of Theorem \ref{thm: global} and the second line of \eqref{eq: parti-intro} with $ v=w=f $.

In order to prove that $ A $ is a symmetric operator, note that the last line of \eqref{eq: parti-intro} can be rewritten as
\begin{equation*}\label{eq: op-symm}
\int_{\mathbb{R}^d} f(x) \, A(g)(x) \, \rho(x) \mathrm{d}x = \int_{\mathbb{R}^d} A(f)(x) \, g(x) \, \rho(x) \mathrm{d}x \quad \forall f,g \in D(A) \, ,
\end{equation*}
which means $D(A) \subset D(A^\ast)$ and $A = A^\ast$ on $D(A)$. Hence, proving that $ A $ is self-adjoint in $ \LL^2_\rho(\mathbb{R}^d) $ amounts to establishing the opposite inclusion $D(A^\ast) \subset D(A)$. To this end we point out that, by the definition of $D(A^\ast)$, one has that $h \in D(A^\ast)$ if and only if $h \in \LL^2_{\rho}(\mathbb{R}^d)$ and there exists a positive constant $M_h$ such that
\begin{equation}\label{eq: op-adj}
\int_{\mathbb{R}^d} h(x) \, A(g)(x) \, \rho(x) \mathrm{d}x \le M_h \left\| g \right\|_{2,\rho} \ \ \ \forall g \in D(A) \, .
\end{equation}
As recalled above, $\mathcal{D}(\mathbb{R}^d) \subset D(A)$: in particular \eqref{eq: op-adj} holds for all $g=\phi \in \mathcal{D}(\mathbb{R}^d)$. Since $\mathcal{D}(\mathbb{R}^d)$ is dense in $ \LL^2_{\rho}(\mathbb{R}^d)$, we can infer the existence of a unique $ \mathcal{E} \in \LL^2_{\rho}(\mathbb{R}^d)$ such that
\begin{equation}\label{eq: op-adj-bis}
\int_{\mathbb{R}^d} h(x) \, A(\phi)(x) \, \rho(x) \mathrm{d}x = \int_{\mathbb{R}^d} h(x) \, (-\Delta)^s(\phi)(x) \, \mathrm{d}x = \int_{\mathbb{R}^d} \mathcal{E}(x) \, \phi(x) \, \rho(x) \mathrm{d}x \ \ \ \forall \phi \in \mathcal{D}(\mathbb{R}^d) \, .
\end{equation}
The last identity in \eqref{eq: op-adj-bis} implies $ \rho \mathcal{E} = (-\Delta)^s(h)$, whence $ h \in X_{s,\rho} = D(A)$.
We have therefore established the inclusion $D(A^\ast) \subset D(A)$, and self-adjointness is proved.

Let us finally deal with the quadratic form $Q$ associated with $A$. Thanks to Theorem \ref{thm: global}, we have that
\begin{equation}\label{eq: quad-form-dom-A}
Q(v,v)= \frac{C_{d,s}}{2} \int_{\mathbb{R}^d} \int_{\mathbb{R}^d} \frac{(v(x)-v(y))^2}{|x-y|^{d+2s}} \, \mathrm{d}x \mathrm{d}y
\end{equation}
for all $ v \in D(A) $. As it is well known (see \emph{e.g.}~\cite[Section 1.2]{Davies}), the domain $D(Q)$ of $ Q $ is precisely the closure of $ D(A) $ endowed with the norm
$$ \left\| v \right\|_{Q} := \sqrt{\left\| v \right\|_{2,\rho^{-1}}^2 + Q(v,v)} = \sqrt{\left\| v \right\|_{2,\rho^{-1}}^2 + \left\| v \right\|_{\dot{\HH}^s(\mathbb{R}^d)}^2} \quad \forall v \in D(A) \, . $$
It is straightforward to check that such closure is nothing but $ \LL^2_{\rho}(\mathbb{R}^d) \cap \dot{\HH}^s(\mathbb{R}^d) $, and that
the quadratic form on $D(Q)=\LL^2_{\rho}(\mathbb{R}^d) \cap \dot{\HH}^s(\mathbb{R}^d)$ is still represented by \eqref{eq: quad-form-dom-A}.

By classical results (see \emph{e.g.}~\cite[Sections 1.3, 1.4]{Davies}), proving that $ A $ generates a Markov semigroup is equivalent to proving that if $ v $ belongs to $D(Q) $ then both $ v \vee 0 $ and $ v \wedge 1 $ belong to $D(Q)$ and satisfy
\begin{equation*}\label{eq: verifica-markov}
Q(v \vee 0,v \vee 0) \le Q(v,v) \ \ \ \textrm{and} \ \ \ Q(v \wedge 1,v \wedge 1) \le Q(v,v) \, ,
\end{equation*}
which is true in view of the characterization of $Q$ given above. The last assertions of the statement then follow from the general theory of symmetric Markov semigroups (see in particular \cite[Theorems 1.4.1, 1.4.2]{Davies}).
\end{proof}

\begin{oss} \label{rem: densita} \rm
\emph{A posteriori}, for $ \rho $ and $ p $ complying with the hypotheses of Theorem \ref{thm: global}, under the additional constraint $ \gamma \le d - \tfrac{p}{2}(d-2s) $, the subspace $ \mathcal{D}(\mathbb{R}^d)$ is dense in $ X_{p,s,\rho} $. Indeed, in view of Proposition \ref{lem: densita-s}--\eqref{X-c}, it is enough to prove that for any $ \varphi \in C^\infty(\mathbb{R}^d) \cap X_{p,s,\rho}$ the cut-off family $ \{ \xi_R \varphi \}_{R \ge 1} $ converges to $ \varphi $ in $X_{p,s,\rho}$ as $R \to \infty$. The only nontrivial point to cope with is the convergence of $\{ (-\Delta)^s(\xi_R \varphi) \} $ to $(-\Delta)^s(\varphi)$ in $\LL^{p^\prime}_{\rho^\prime}(\mathbb{R}^d)$, which can be tackled by arguing as in the proof of Lemma \ref{lem: stime-integrali-cutoff} (we omit the technical details).
\end{oss}

\section{Supercritical powers in the case $ d>2s $}\label{sec: supercritical}

In this section we deal with supercritical values of $ \gamma $, namely $ \gamma > d - \tfrac{p}{2}(d-2s) $, for $ d>2s $ and $ p \in [2,{2d}/{(d-2s)} ] $. \emph{A priori} it is not clear whether or not \eqref{eq: parti-intro} continues to hold. Indeed, from the one hand the space $ X_{p,s,\rho} $ should get larger if one looks at $ \| v \|_{p,\rho} $, on the other hand it should get smaller if one looks at $ \| (-\Delta)^s(v) \|_{p^\prime,\rho^\prime} $. In fact, in agreement with Theorems \ref{thm: global}--\ref{thm: global-self-adj}, we shall see that the actual value that separates the region where \eqref{eq: parti-intro} is valid from the one where it is not is $ \gamma=d $.

\medskip

As mentioned in the Introduction, the assumption $ d>2s $ ensures the existence of the following \emph{Riesz kernel} (or \emph{Green function}) of the fractional Laplacian in $ \mathbb{R}^d $:
\begin{equation*}\label{eq: def-pot}
\mathsf{I}_{d,s}(x):=\frac{\kappa_{d,s}}{|x|^{d-2s}} \quad \forall x \in \mathbb{R}^d \setminus \{ 0 \} \, ,
\end{equation*}
where $ \kappa_{d,s} >0 $ is an explicit positive constant (see \emph{e.g.}~\cite[Chapter I, Section 1]{Lkof}). As it is well known, the Riesz kernel solves
$$  (-\Delta)^s\left( \mathsf{I}_{d,s} \right) = \delta  \quad \textrm{in } \mathbb{R}^d \, , $$
in the sense that
\begin{equation*}\label{eq: def-pot-weak}
\int_{\mathbb{R}^d} \mathsf{I}_{d,s}(x) \, (-\Delta)^s(\phi)(x) \, \mathrm{d}x = \phi(0) \quad \forall \phi \in \mathcal{D}(\mathbb{R}^d) \, .
\end{equation*}
As a consequence, for any $ f $ having good integrability properties, there exists only one solution to the equation $ (-\Delta)^s(F)=f $ in $ \mathbb{R}^d $, which is given by $ F = \mathsf{I}_{d,s} \ast f $. Such a solution is referred to as the \emph{Riesz potential} of $ f $.

\smallskip
By taking advantage of potential techniques, we shall then prove Theorem \ref{thm: global} for supercritical powers. Before, we need some technical lemmas.
%
\begin{lem}\label{lem: decay-pot}
Let $ d>2s $ and $  \phi \in \mathcal{D}(\mathbb{R}^d) $. Then the Riesz potential of $ \phi $, namely $ (\mathsf{I}_{d,s} \ast  \phi)(x) $, is a regular function decaying (at least) like $ |x|^{-d+2s} $ as $ |x| \to \infty $. Moreover, $ |\nabla(\mathsf{I}_{d,s} \ast  \phi)(x)| $ decays (at least) like $ |x|^{-d-1+2s} $ as $ |x| \to \infty $.
\end{lem}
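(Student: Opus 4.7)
The plan is to rely on two elementary facts about the Riesz kernel $\mathsf{I}_{d,s}(x)=\kappa_{d,s}|x|^{-d+2s}$: it is locally integrable (since $d-2s<d$) and away from the origin it is smooth and has power-type decay. Fix $R>0$ such that $\mathrm{supp}(\phi)\subset B_R$.

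For regularity, I would use the convolution representation in the form that shifts the derivative onto the smooth compactly supported factor: since $\mathsf{I}_{d,s}\in\LL^1_{\mathrm{loc}}(\mathbb{R}^d)$ and $\phi\in\mathcal{D}(\mathbb{R}^d)$, the integral
$$ (\mathsf{I}_{d,s}\ast\phi)(x)=\int_{\mathbb{R}^d} \mathsf{I}_{d,s}(z)\,\phi(x-z)\,\mathrm{d}z $$
can be differentiated under the integral any number of times (in $x$), with dominating function $\mathsf{I}_{d,s}(z)\,\|\nabla^k\phi\|_\infty\chi_{B_{R+|x|+1}}(z)$ on compact neighbourhoods. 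Hence $\mathsf{I}_{d,s}\ast\phi\in C^\infty(\mathbb{R}^d)$.

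For the decay of $\mathsf{I}_{d,s}\ast\phi$, I would simply estimate pointwise: for $|x|>2R$ and $y\in\mathrm{supp}(\phi)\subset B_R$, one has $|x-y|\ge |x|-|y|\ge |x|/2$, so
$$ \left|(\mathsf{I}_{d,s}\ast\phi)(x)\right|\le \kappa_{d,s}\int_{B_R}\frac{|\phi(y)|}{|x-y|^{d-2s}}\,\mathrm{d}y \le \frac{2^{d-2s}\,\kappa_{d,s}\,\|\phi\|_1}{|x|^{d-2s}} \, , $$
which yields the claimed rate $|x|^{-d+2s}$.

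For the gradient, the key observation is that when $|x|>2R$ the singularity of $\mathsf{I}_{d,s}$ is bounded away from the support of $\phi$, so we may differentiate under the integral \emph{on the kernel side}:
$$ \nabla(\mathsf{I}_{d,s}\ast\phi)(x)=-\int_{\mathrm{supp}(\phi)}(d-2s)\,\kappa_{d,s}\,\frac{x-y}{|x-y|^{d-2s+2}}\,\phi(y)\,\mathrm{d}y \, , $$
the dominating function being uniform on neighbourhoods of $x$ in the region $\{|x|>2R\}$. Using once more $|x-y|\ge |x|/2$, each component is bounded by
$$ \frac{(d-2s)\,\kappa_{d,s}\,2^{d-2s+1}\,\|\phi\|_1}{|x|^{d-2s+1}} \, , $$
which is exactly the desired $|x|^{-d-1+2s}$ decay. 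The main (minor) obstacle is just keeping the two differentiation arguments separated: smoothness everywhere is obtained by moving derivatives onto $\phi$, whereas the improved decay rate at infinity needs derivatives moved onto $\mathsf{I}_{d,s}$, which is legitimate only in the far-field region where the kernel singularity is harmless.
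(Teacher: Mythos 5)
Your proof is correct. The paper itself omits the argument as standard, deferring to \cite{TDott}, and what you have written is precisely that standard argument: local integrability of the kernel plus differentiation under the integral on the $\phi$-side gives smoothness, the far-field bound $|x-y|\ge|x|/2$ on $\mathrm{supp}(\phi)$ gives the $|x|^{-d+2s}$ decay, and differentiating the kernel (legitimate away from the singularity) gives the extra power for the gradient. Your closing remark about keeping the two differentiation arguments separate is exactly the right point of care.
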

\begin{proof}
These properties are rather standard, hence we omit the proof. As a reference, see \emph{e.g.}~\cite[Lemma 2.2.13]{TDott}.
\end{proof}

\begin{lem}\label{lem: decay-ls-pot}
Let $ d>2s $, $  \phi \in \mathcal{D}(\mathbb{R}^d) $ and $ l_{p,s} $ be defined as in Lemma \ref{lem:decay-lap-2}. Then, under the assumption $ p \in (1 ,{d}/(d-2s) ) $, the function $ l_{p,s}(\mathsf{I}_{d,s} \ast \phi)(x) $ is continuous and decays (at least) like $ |x|^{-p(d-s)} $ as $ |x|\to\infty $.
\end{lem}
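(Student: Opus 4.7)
The plan is to set $u := \mathsf{I}_{d,s}\ast\phi$, which by Lemma \ref{lem: decay-pot} belongs to $C^\infty(\mathbb{R}^d)$ and satisfies $|u(x)| \lesssim (1+|x|)^{-(d-2s)}$ together with $|\nabla u(x)| \lesssim (1+|x|)^{-(d+1-2s)}$. Continuity of $l_{p,s}(u)$ can then be established by arguments entirely analogous to those used in Lemma \ref{lem:decay-lap-2} and in the discussion around \eqref{eq: classical-phi-a-p} inside Lemma \ref{lem: laplaciano-classico-laplaciano-distrib}: on compact sets the short-range part is controlled via $|u(x)-u(y)| \le |x-y|\,\|\nabla u\|_\infty$, making the integrand locally dominated by $|x-y|^{p(1-s)-d}$ (which is integrable near the diagonal since $p(1-s)>0$), while the long-range part is dominated by a constant multiple of $|u(y)|^p/|y|^{d+ps}$, which is integrable at infinity thanks to the pointwise decay of $u$.

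For the decay as $|x|\to\infty$, I would fix such an $x$ and split the integration domain into three regions: (A) $|x-y|<|x|/2$ (which forces $|y|\asymp|x|$); (B) $|x-y|\ge|x|/2$ and $|y|\ge|x|/2$; (C) $|x-y|\ge|x|/2$ and $|y|<|x|/2$. In region (A) the segment from $x$ to $y$ lies inside $\{|z|\ge|x|/2\}$, so the mean value theorem yields $|u(x)-u(y)|\lesssim|x-y|\,|x|^{-(d+1-2s)}$; integrating $|x-y|^{p(1-s)-d}$ over $\{|x-y|<|x|/2\}$ produces a factor $|x|^{p(1-s)}$, and altogether one obtains exactly $|x|^{-p(d-s)}$. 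In region (B) one bounds $|u(x)-u(y)|^p\le 2^{p-1}(|u(x)|^p+|u(y)|^p)$: the $|u(x)|^p$ piece gives $|x|^{-p(d-2s)}\cdot|x|^{-ps}=|x|^{-p(d-s)}$, while the $|u(y)|^p$ piece is handled by splitting at $|y|=2|x|$, using $|u(y)|\lesssim|x|^{-(d-2s)}$ on the annulus $|x|/2\le|y|\le 2|x|$, and $|x-y|\gtrsim|y|$ together with $\int_{|y|>2|x|}|y|^{-p(d-2s)-d-ps}\,\mathrm{d}y\lesssim|x|^{-p(d-s)}$ on the outer portion (which uses $p(d-s)>0$).

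The delicate region is (C), where $|x-y|\asymp|x|$ while $y$ ranges over the large ball $B_{|x|/2}$. The $|u(x)|^p$ contribution is harmless: $|x|^{-d-ps}\cdot|x|^{-p(d-2s)}\cdot|B_{|x|/2}|\lesssim|x|^{-p(d-s)}$. The $|u(y)|^p$ contribution reduces to
\[
|x|^{-d-ps}\int_{B_{|x|/2}}|u(y)|^p\,\mathrm{d}y\, ,
\]
and here the hypothesis $p<d/(d-2s)$ is essential: it makes $|u|^p$ \emph{not} globally integrable and forces the above integral to grow precisely like $|x|^{d-p(d-2s)}$ (consistently with $|u(y)|^p\lesssim|y|^{-p(d-2s)}$ and $p(d-2s)<d$), so that the product again equals $|x|^{-p(d-s)}$. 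Collecting the three estimates yields the claimed decay rate. I expect the main obstacle to be the bookkeeping in region (C): one has to verify carefully the growth rate of $\int_{B_R}|u|^p$ and observe that the borderline $p=d/(d-2s)$ would introduce a logarithmic factor, which is precisely why that value is excluded from the statement.
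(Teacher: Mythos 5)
Your proposal is correct and follows essentially the same route as the paper: the paper splits $l_{p,s}(\mathsf{I}_{d,s}\ast\phi)(x)$ into four regions ($B_{2|x|}^c$, $B_{|x|/2}(x)$, the intermediate annular set, and $B_{|x|/2}$), which correspond exactly to your regions (B) split in two, (A), and (C), and it uses the same gradient bound near $x$ and the same estimate $\int_{B_{|x|/2}}|\mathsf{I}_{d,s}\ast\phi|^p \lesssim |x|^{d-p(d-2s)}$ (valid precisely because $p<d/(d-2s)$) for the contribution near the origin. All your exponent computations check out, so the argument is sound.
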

\begin{proof}
Again, we shall give a rigorous proof of the decay behaviour only, since continuity easily follows from the properties of $ \mathsf{I}_{d,s} $ and $ \phi $. To this aim, let us set $ \Phi(x):=(\mathsf{I}_{d,s} \ast \phi)(x) $ and observe that
$$ 
\begin{aligned}
l_{p,s}(\Phi)(x) = & \int_{B_{2|x|}^c} \frac{\left| \Phi(x) - \Phi(y) \right|^p}{|x-y|^{d+ps}} \, \mathrm{d}y + \int_{B_{{|x|}/{2}}(x)} \frac{\left| \Phi(x) - \Phi(y) \right|^p}{|x-y|^{d+ps}} \, \mathrm{d}y \\
& + \int_{B_{2|x|} \cap B^c_{{|x|}/{2}}(x) \cap B^c_{{|x|}/{2}} } \frac{\left| \Phi(x) - \Phi(y) \right|^p}{|x-y|^{d+ps}} \, \mathrm{d}y + \int_{B_{{|x|}/{2}}} \frac{\left| \Phi(x) - \Phi(y) \right|^p}{|x-y|^{d+ps}} \, \mathrm{d}y \\
 =: & F_1(x)+F_2(x)+F_3(x)+F_4(x) \, .
\end{aligned}
$$
Since $ |x| \le {|y|}/{2} $ as $ y \in B_{2|x|}^c $, by exploiting the decay behaviour of $ \Phi $ (Lemma \ref{lem: decay-pot}) we easily deduce that $ F_1(x) \le C \, |x|^{-p(d-s)} $ for $ |x| $ large (from here on we denote by $C$ a generic positive constant independent of $ \phi $). As for $ F_2 $, upon noting that $ |\Phi(x)-\Phi(y)|^p \le | \nabla{\Phi}(z) |^p |x-y|^p $ for all $ y \in B_{{|x|}/{2}}(x) $, for some $ z \in B_{{|x|}/{2}}(x) $, and recalling the decay properties of $ |\nabla{\Phi}| $ (Lemma \ref{lem: decay-pot}), we can infer that
$$  F_2(x) \le \max_{z \in B_{{|x|}/{2}}(x)} \left| \nabla{\Phi}(z) \right|^p \int_{B_{{|x|}/{2}}(x)} \frac{1}{|x-y|^{d-p(1-s)}} \, \mathrm{d}y \le C \, |x|^{-p(d-s)} \, . $$
The integral $ F_3 $ can be dealt with as $ F_1 $, with inessential modifications.
Finally, we have:
$$ F_4(x) \le \frac{C}{|x|^{d+ps}} \left( |x|^{d-p(d-2s)} + \int_{B_{{|x|}/{2}}} | \Phi(y)|^p \, \mathrm{d}y \right)  $$
for $ |x| $ large, namely $ F_4(x) \le C \, |x|^{-p(d-s)} $ since $ p < d/(d-2s) $.
\end{proof}

To our ends it is essential to show that, under the running assumptions on $d$, $s$, $ p $ and $\gamma$, all $ v \in X_{p,s,\rho} $ coincide with the Riesz potential of their fractional Laplacian. 
\begin{lem}\label{lem: coinc-pot}
Let $ d>2s $ and $ p \in [ 2, 2d/(d-2s) ] $. Suppose that $ \rho $ satisfies \eqref{eq: rho-two-powers-intro} for some $ \gamma_0 \in [0,d) $ and $ \gamma \in \left( d - \tfrac{p}{2}(d-2s) , d \right] $. Then $ v = \mathsf{I}_{d,s} \ast (-\Delta)^s(v) $ for all $ v \in X_{p,s,\rho} $. 
\end{lem}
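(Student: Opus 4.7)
The plan is to test the desired identity against an arbitrary $\phi \in \mathcal{D}(\mathbb{R}^d)$ and then invoke density. Setting $\psi := \mathsf{I}_{d,s} \ast \phi$, Lemma \ref{lem: decay-pot} gives $\psi \in C^\infty(\mathbb{R}^d)$ bounded with $|\psi(x)| \lesssim (1+|x|)^{-(d-2s)}$, and $(-\Delta)^s(\psi) = \phi$. By Fubini, $\int_{\mathbb{R}^d} (\mathsf{I}_{d,s} \ast (-\Delta)^s(v))(x) \, \phi(x) \, \mathrm{d}x = \int_{\mathbb{R}^d} (-\Delta)^s(v)(x) \, \psi(x) \, \mathrm{d}x$ provided the double integral converges absolutely. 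By H\"{o}lder's inequality with the weight $\rho$, absolute convergence reduces to $\psi \in \LL^p_\rho(\mathbb{R}^d)$: this holds near the origin because $\gamma_0 < d$ and $\psi$ is bounded, and at infinity because $|\psi|^p \rho \lesssim |x|^{-p(d-2s)-\gamma}$ is integrable as soon as $p(d-2s)+\gamma > d$, a condition implied by $\gamma > d - \tfrac{p}{2}(d-2s)$. Hence the lemma reduces to the extended integration-by-parts identity
\begin{equation}\label{eq: goal-pot}
\int_{\mathbb{R}^d} v(x) \, (-\Delta)^s(\psi)(x) \, \mathrm{d}x = \int_{\mathbb{R}^d} (-\Delta)^s(v)(x) \, \psi(x) \, \mathrm{d}x \, .
\end{equation}

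To establish \eqref{eq: goal-pot} I would approximate $\psi$ by its truncation $\psi_R := \xi_R \, \psi \in \mathcal{D}(\mathbb{R}^d)$, with $\xi_R$ the cut-off from Lemma \ref{lem:decay-lap-cutoff}. Since $\psi_R$ is a test function, the very definition of $X_{p,s,\rho}$ gives $\int v \, (-\Delta)^s(\psi_R) \, \mathrm{d}x = \int (-\Delta)^s(v) \, \psi_R \, \mathrm{d}x$, whose right-hand side tends to $\int (-\Delta)^s(v) \, \psi \, \mathrm{d}x$ as $R \to \infty$ by dominated convergence. For the left-hand side I would expand $(-\Delta)^s(\psi_R)$ via the product formula \eqref{eq: lapll-cut-off-prodotto-phi}: as soon as $\xi_R \equiv 1$ on $\mathrm{supp}\,\phi$, the main contribution $\xi_R \, (-\Delta)^s(\psi) = \phi$ yields exactly $\int v \, \phi \, \mathrm{d}x$, leaving the two remainders
\begin{equation*}
\begin{aligned}
\mathcal{R}_1(R) & := \int_{\mathbb{R}^d} v(x) \, \psi(x) \, (-\Delta)^s(\xi_R)(x) \, \mathrm{d}x \, , \\
\mathcal{R}_2(R) & := 2 \, C_{d,s} \int_{\mathbb{R}^d} \! \int_{\mathbb{R}^d} v(x) \, \frac{[\psi(x)-\psi(y)] \, [\xi_R(x)-\xi_R(y)]}{|x-y|^{d+2s}} \, \mathrm{d}x \mathrm{d}y
\end{aligned}
\end{equation*}
to be driven to $0$.

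The hard part will be controlling $\mathcal{R}_1(R)$ and $\mathcal{R}_2(R)$, and this is exactly where the bound $\gamma \le d$ becomes sharp. The argument would mirror Lemma \ref{lem: stime-integrali-cutoff}, except that one of the two ``factors'' is now $\psi$, carrying the pointwise decay $(1+|x|)^{-(d-2s)}$ rather than an $\LL^p_\rho$ bound. Splitting each integral along $\{|x| \le \alpha(R)\}$ and $\{|x| > \alpha(R)\}$ and applying H\"{o}lder with the weight $\rho$ together with the scaling identities of Lemma \ref{lem: stime-termini-cutoff-2} and the pointwise bounds on $(-\Delta)^s(\xi_R)$ and $l_{p',s}(\xi_R)$ from Lemmas \ref{lem:decay-lap-1}--\ref{lem:decay-lap-2}, the inner contribution is bounded by $\|v\|_{p,\rho}$ times a positive power of $\alpha(R) \, R^{-\tau}$ (with $\tau > 0$), while the outer one is bounded by $\|v\|_{\LL^p_\rho(B_{\alpha(R)}^c)}$ times a quantity uniformly bounded in $R$ precisely because $\gamma \le d$ (the borderline $\gamma=d$ still works because $\|v\|_{\LL^p_\rho(B_{\alpha(R)}^c)} \to 0$). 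Choosing $\alpha(R) = R^\theta$ with $\theta$ small would then drive both remainders to $0$, closing \eqref{eq: goal-pot} and hence the lemma. The threshold $\gamma = d$ is in agreement with the author's forthcoming disproof of the integration-by-parts formulas when $\gamma > d$.
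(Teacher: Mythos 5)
Your proposal follows essentially the same route as the paper: test against $\phi\in\mathcal{D}(\mathbb{R}^d)$, form $\psi=\mathsf{I}_{d,s}\ast\phi$, use $\xi_R\psi$ as a test function for the distributional fractional Laplacian of $v$, expand via the product formula \eqref{eq: lapll-cut-off-prodotto-phi}, kill the two remainders by splitting at radius $\alpha(R)=o(R)$ with H\"older, the decay of $\psi$ and $l_{q,s}(\psi)$, and the scaling of the cut-off, and conclude by Fubini. The only part left as a sketch is the quantitative control of $\mathcal{R}_1,\mathcal{R}_2$ (the paper's \eqref{eq: def-weak-trunc-simpler-critical-modified}--\eqref{eq: def-weak-trunc-4-simply}, where the outer factor can actually grow slowly in $R$ rather than stay bounded, but is still dominated by $R^{-(d-\gamma)/p}$), and your identification of the mechanism and of the role of $\gamma\le d$ is correct.
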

\begin{proof}
Given $ v \in X_{p,s,\rho} $ and any test function $ \phi \in \mathcal{D}(\mathbb{R}^d) $, let $ \Phi=\mathsf{I}_{d,s} \ast \phi $. For all $ R \ge 1 $, it is plain that $ \xi_R \Phi $ is also a test function. Hence, by the definition of $ (-\Delta)^s(v) $, in view of the product formula \eqref{eq: lapll-cut-off-prodotto-phi} (with $ v=\Phi $ there) and the fact that $ (-\Delta)^s(\Phi) = \phi $, there holds
\begin{equation}\label{eq: lapll-cut-off-prodotto-phi-bis}
\begin{aligned}
\int_{\mathbb{R}^d} (-\Delta)^s(v)(x) \, \xi_R(x) \, \Phi(x) \, \mathrm{d}x = & \int_{\mathbb{R}^d} v(x) \xi_R(x) \, \phi(x) \, \mathrm{d}x + \int_{\mathbb{R}^d} v(x) \, (-\Delta)^s(\xi_R)(x) \, \Phi(x) \, \mathrm{d}x \\
& + 2 \, C_{d,s} \! \int_{\mathbb{R}^d} v(x) \underbrace{\int_{\mathbb{R}^d} \frac{ \left( \xi_R(x) - \xi_R(y) \right) \left( \Phi (x) - \Phi (y) \right) }{|x-y|^{d+2s}} \, \mathrm{d}y}_{\mathsf{L}_R(x)} \mathrm{d}x \, .
\end{aligned}
\end{equation}
It is apparent that the first term converges to $ \int_{\mathbb{R}^d} v(x) \, \phi(x) \, \mathrm{d}x $ as $ R \to \infty $. As for the second term, upon taking advantage of Lemma \ref{lem:decay-lap-cutoff} and proceeding by means of arguments similar to those exploited in the proof of Lemma \ref{lem: stime-integrali-cutoff} (adopting the same notations), we obtain:
\begin{equation}\label{eq: def-weak-trunc-simpler-critical-modified}
\begin{aligned}
& \left| \int_{\mathbb{R}^d} v(x) \, (-\Delta)^s(\xi_R)(x) \, \Phi(x) \, \mathrm{d}x \right|  \\
\le & \frac{ \| (-\Delta)^s(\xi) \|_\infty \, \| v \|_{p,\rho}}{R^{2s}} \underbrace{\left( \int_{B_{\alpha(R)}} \left| \Phi(x) \right|^{\frac{p}{p-1}} \left[\rho(x)\right]^{-\frac{1}{p-1}} \, \mathrm{d}x \right)^{\frac{p-1}{p}}}_{\sigma(R)}  \\
& +  \frac{Q \left\| v \right\|_{\LL^p_\rho\left(B^c_{\alpha(R)}\right)}}{R^{\frac{d-\gamma}{p}} \, c^{\frac1p}} \underbrace{ \left( \int_{B_{{\alpha(R)}/{R}}^c} \left| (-\Delta)^s(\xi)(x) \right|^{\frac{p}{p-1}} \, |x|^{\frac{\gamma-p(d-2s)}{p-1}} \, \mathrm{d}x  \right)^{\frac{p-1}{p}} }_{\varsigma(R)} \, ,
\end{aligned}
\end{equation} 
where $ Q $ is any positive constant such that $ |\Phi(x)| \le Q \, |x|^{-d+2s} $ for all $ x \in B_1^c $. Let $ \alpha(R) = o(R) $ as $ R \to \infty $. Thanks to Lemma \ref{lem: decay-pot} and the assumptions on $ \rho $, it is not difficult to infer that $ \sigma(R) $ behaves like $ \alpha(R)^{(2ps+\gamma-d)/p} $, like $ \log[\alpha(R)]^{(p-1)/p} $ or tends to a constant as $ R \to \infty $ depending on whether $ \gamma>d-2ps $, $ \gamma=d-2ps $ or $ \gamma<d-2ps $, respectively. Similarly, in view of Lemma \ref{lem:decay-lap-1}, it turns out that $ \varsigma(R) $ tends to a constant, behaves like $ \log[R/\alpha(R)]^{(p-1)/p} $ or like $ [R/\alpha(R)]^{(d-2ps-\gamma)/p} $ as $ R \to \infty $ depending on whether $ \gamma>d-2ps $, $ \gamma=d-2ps $ or $ \gamma<d-2ps $, respectively. In any case, these properties ensure that the r.h.s.~of \eqref{eq: def-weak-trunc-simpler-critical-modified} vanishes as $ R \to \infty $ for all $ \gamma \in \left(d - \frac{p}{2}(d-2s),d \right] $. 

As for the third term in \eqref{eq: lapll-cut-off-prodotto-phi-bis}, we get
\begin{equation}\label{eq: def-weak-trunc-4-simply}
\begin{aligned}
 \left| \int_{\mathbb{R}^d} v(x) \, \mathsf{L}_R(x) \, \mathrm{d}x \right| 
\le & \frac{\left\| l_{q,s}(\xi) \right\|_{\infty^{\phantom{a}}}^{\frac1q}}{R^{s} } \left\| v \right\|_{p,\rho} \underbrace{\left( \int_{B_{\alpha(R)}} \left[ l_{q^\prime,s}(\Phi)(x) \right]^{\frac{p^\prime}{q^\prime}}  \left[\rho(x)\right]^{-\frac{1}{p-1}} \mathrm{d}x \right)^{\frac{p-1}{p}}}_{\sigma_0(R)} \\
& + \frac{Q_0^{\frac{q-1}{q}} \left\| v \right\|_{\LL^p_\rho\left( B^c_{\alpha(R)} \right)}}{R^{\frac{d-\gamma}{p}} \, c^{\frac1p}} \underbrace{\left( \int_{B^c_{{\alpha(R)}/{R}}} \left[ l_{q,s}( \xi )(x) \right]^{\frac{p}{q(p-1)}} |x|^{\frac{\gamma-p(d-s)}{p-1}} \, \mathrm{d}x \right)^{\frac{p-1}{p}}}_{\varsigma_0(R)}
\end{aligned}
\end{equation}
provided $ q>\tfrac{d}{2s} $, upon recalling the definition of $ l_{q,s} $ given in Lemma \ref{lem:decay-lap-2}, using Lemma \ref{lem:decay-lap-cutoff} and arguing similarly to the proof of Lemma \ref{lem: stime-integrali-cutoff}. Here $ Q_0 $ is any positive constant such that $ |l_{q^\prime,s}( \Phi )(x) | \le Q_0 \, |x|^{-q^\prime(d-s)} $ for all $ x \in B_1^c $. Let $ \alpha(R) = o(R) $ as $ R \to \infty $. Thanks to Lemma \ref{lem: decay-ls-pot} ($ q>\tfrac{d}{2s} $ is equivalent to $ q^\prime < d/(d-2s) $) and the assumptions on $ \rho $, one can deduce that $ \sigma_0(R) $ behaves like $ \alpha(R)^{(ps+\gamma-d)/p} $, like $ \log[\alpha(R)]^{(p-1)/p} $ or tends to a constant as $ R\to\infty $ depending on whether $ \gamma > d-ps $, $ \gamma=d-ps $ or $ \gamma < d-ps $, respectively. Similarly, by virtue of Lemma \ref{lem:decay-lap-2}, one infers that $ \varsigma_0(R) $ tends to a constant, behaves like $ \log[R/\alpha(R)]^{(p-1)/p} $ or like $ [R/\alpha(R)]^{(d-ps-\gamma)/p} $ as $ R \to \infty $ depending on whether $ \gamma>d-ps $, $ \gamma=d-ps $ or $ \gamma<d-ps $, respectively. As a consequence, also the r.h.s.~of \eqref{eq: def-weak-trunc-4-simply} vanishes as $ R \to \infty $ for all $ \gamma \in \left(d - \tfrac{p}{2}(d-2s),d\right] $. 

Since $ \rho $ satisfies \eqref{eq: rho-two-powers-intro} for some $ \gamma > d - \tfrac{p}{2}(d-2s) $, from Lemma \ref{lem: decay-pot} it is straightforward to check that $ \Phi $ belongs to $\LL^p_{\rho}(\mathbb{R}^d) $, so that $ (-\Delta)^s (v) \, \Phi \in \LL^1(\mathbb{R}^d) $ and the l.h.s.~of \eqref{eq: lapll-cut-off-prodotto-phi-bis} converges to $ \int_{\mathbb{R}^d} (-\Delta)^s(v)(x) \, \Phi(x) \, \mathrm{d}x $ as $ R \to \infty $. Hence, by letting $ R \to \infty $ in \eqref{eq: lapll-cut-off-prodotto-phi-bis} and using Fubini's Theorem, the assertion follows given the arbitrariness of $ \phi $.
\end{proof}

We are now in position to prove Theorem \ref{thm: global} in the case $ \gamma $ is supercritical. 
\begin{proof}[Proof of Theorem \ref{thm: global} (case $ d>2s $ and $ \gamma >  d-\tfrac{p}{2}(d-2s) $)]
Thanks to Lemma \ref{lem: coinc-pot}, for $ \gamma  $ supercritical and smaller than or equal to $d$ we only need to prove the identity
\begin{equation}\label{eq: def-weak-dom-1}
\int_{\mathbb{R}^d} \left| (-\Delta)^{s/2}\left( \mathsf{I}_{d,s} \ast f \right)\!(x) \right|^2 \mathrm{d}x = \int_{\mathbb{R}^d} \left(\mathsf{I}_{d,s} \ast f \right)\!(x) \, f(x) \, \mathrm{d}x  \quad \forall f \in \LL^{p^\prime}_{\rho^{\prime}}(\mathbb{R}^d) \! : \ \mathsf{I}_{d,s} \ast f \in \LL^{p}_{\rho}(\mathbb{R}^d) \, .
\end{equation} 
To this end, let us consider the Green function of $ (-\Delta)^s $ on $B_R$, namely the unique positive solution $ G_{y,R} $ to 
$$  (-\Delta)_{R}^s \left( G_{y,R} \right) = \delta_y \quad \textrm{in } B_R \, , $$
where by $ (-\Delta)_{R}^s $ we mean the \emph{spectral} fractional Laplacian in $B_R$ and by $ \delta_y $ we denote the Dirac delta centred at $ y \in B_R $. Given $ h \in \LL^2(B_R) $, the unique solution $ u \in \HH^s_0(B_R) $ to 
$$
\begin{cases}
(-\Delta)_{R}^s(u)=h & \textrm{in } {B_R} \, ,   \\
u=0 & \textrm{on } {\partial B_R} \, ,
\end{cases} 
$$
is provided by $ u_R(x)=\int_{B_R} G_{y,R}(x) \, h(y) \, \mathrm{d}y $, in the sense that 
\begin{equation}\label{eq: def-weak-dom-2}
\int_{B_R} (-\Delta)_R^{s/2}(u_R)(x) \, (-\Delta)_R^{s/2}(\psi)(x) \, \mathrm{d}x  =  \int_{B_R} h(x) \, \psi(x) \, \mathrm{d}x \quad \forall \psi \in \HH^s_0(B_R) \, .
\end{equation}
By setting $ h= f_\varepsilon  $, with $ f_\varepsilon $ as in \eqref{eq: def-mollificazione}, and plugging $ \psi=u_R $ in \eqref{eq: def-weak-dom-2}, we find the identity
\begin{equation}\label{eq: def-weak-dom-3}
\int_{B_R} \left| (-\Delta)_R^{s/2}(u_R)(x) \right|^2 \mathrm{d}x  =  \int_{B_R} f_\varepsilon(x) \, u_R(x) \, \mathrm{d}x  \, .
\end{equation}
As it is well known (see \emph{e.g.} \cite[Appendix 11]{BV-aprio} and references therein),
$$ 
\lim_{R \to \infty} G_{y,R}(x) = \mathsf{I}_{d,s}(x-y) \quad \textrm{and} \quad G_{y,R}(x) \le \mathsf{I}_{d,s}(x-y) \quad \forall x \neq y \, , \ \, \forall R>0 \, , 
$$
so that by dominated convergence we can pass to the limit in \eqref{eq: def-weak-dom-3} as $ R \to \infty $ to get 
\begin{equation}\label{eq: def-weak-dom-4}
\int_{\mathbb{R}^d} \left| (-\Delta)^{s/2}( \mathsf{I}_{d,s} \ast f_\varepsilon )(x) \right|^2 \mathrm{d}x  \le  \int_{\mathbb{R}^d} f_\varepsilon(x) \left(\mathsf{I}_{d,s} \ast f_\varepsilon \right)\!(x) \, \mathrm{d}x 
\end{equation}
along with the weak convergence of $ (-\Delta)_R^{s/2}(u_R) $ to $(-\Delta)^{s/2}( \mathsf{I}_{d,s} \ast f_\varepsilon )$ in $ \LL^2(\mathbb{R}^d) $.
Actually, in order to make sure that the r.h.s.~of \eqref{eq: def-weak-dom-4} is finite,
we first have to show that $ \mathsf{I}_{d,s} \ast f_\varepsilon \in \LL^p_\rho(\mathbb{R}^d) $ \emph{as a consequence} of the fact that $ f \in \LL^{p^\prime}_{\rho^\prime}(\mathbb{R}^d) $ (recall that also $ f_\varepsilon \in \LL^{p^\prime}_{\rho^\prime}(\mathbb{R}^d) $ by Theorem \ref{lem: lemma-moll}). To this end, first of all note that $ \mathsf{I}_{d,s} \ast f_\varepsilon \in \LL^\infty_{\rm loc}(\mathbb{R}^d) $. Indeed, for any $ \delta \ge \tfrac{1}{2} $ and all $ x \in B_\delta $, we have:
\begin{equation*}\label{eq: def-weak-dom-cc-intro} 
\begin{aligned}
\left| (\mathsf{I}_{d,s} \ast f_\varepsilon)(x) \right| 
\le \underbrace{\left(\mathsf{I}_{d,s} \ast \left| f_\varepsilon \chi_{B_{2\delta}} \right| \right)\!(x)}_{F(x)} + \, 2^{d-2s} \, \kappa_{d,s} \, C^{\frac{1}{p}} \, \| f_\varepsilon \|_{p^\prime,\rho^\prime} \left( \int_{B_{2\delta}^c} \frac{1}{|y|^{(d-2s)p+\gamma}} \, \mathrm{d}y \right)^{\frac{1}{p}} \, .
\end{aligned}
\end{equation*}
Since $ f_\varepsilon \chi_{B_{2\delta}} \in \LL^1(\mathbb{R}^d) \cap \LL^\infty(\mathbb{R}^d) $, we deduce that $ F \in \LL^\infty(\mathbb{R}^d) $; moreover $ (d-2s)p + \gamma > d $ thanks to the hypotheses on $ \gamma $. As for \emph{global} integrability properties of $ \mathsf{I}_{d,s} \ast f_\varepsilon$, it is readily seen that from $ f_\varepsilon \in \LL^\infty_{\rm loc}(\mathbb{R}^d) \cap \LL^{p^\prime}_{(p^\prime-1)\gamma}(B_1^c)  $ there follows $ f_\varepsilon \in \LL^q(\mathbb{R}^d) $ for all $q$ such that  
$p^\prime d/[d+(p^\prime-1)\gamma] < q \le p^\prime$.
In particular, thanks to \cite[Theorem 1 at p. 119]{Stein}, we infer that $ \mathsf{I}_{d,s} \ast f_\varepsilon \in \LL^r (\mathbb{R}^d)$ for all $ r $ such that
\begin{equation}\label{eq: def-weak-dom-cc-2}
\frac{p^\prime d}{d+(p^\prime-1)\gamma-2 p^\prime s} < r \le \frac{p^\prime d}{d-2p^\prime s} \, .
\end{equation}
Let us point out that in the case where $ d=2p^\prime s $ the upper bound on $ r $ in \eqref{eq: def-weak-dom-cc-2} reads $ r < \infty $, while in the case where $ d < 2p^\prime s $ it reads $ r \le \infty $. Moreover, a routine H\"{o}lder's interpolation shows that $ \mathsf{I}_{d,s} \ast f_\varepsilon \in \LL^p_{-\gamma}(B_1^c) $ provided $ \mathsf{I}_{d,s} \ast f_\varepsilon \in \LL^r(B_1^c) $ for some $ p \le r < {pd}/(d-\gamma) $. Because $ p \in [2,2d/(d-2s)] $ and $ \gamma > d-\tfrac{p}{2}(d-2s) $, it is immediate to check that the latter condition and \eqref{eq: def-weak-dom-cc-2} meet for some $r$. 

Let us denote by $ \HH^s_c(\mathbb{R}^d) $ the space of \emph{compactly supported} functions belonging to $ \dot{\HH}^s(\mathbb{R}^d) $. Given any $ \psi \in \HH^s_c(\mathbb{R}^d) $, it is direct to see that the energy $ \int_{B_R} \big|(-\Delta)_R^{s/2}(\psi)(x)\big|^2 \, \mathrm{d}x $ is eventually nonincreasing w.r.t.~$R$ and converges to $ \int_{\mathbb{R}^d} \big|(-\Delta)^{s/2}(\psi)(x)\big|^2 \, \mathrm{d}x  $ as $ R \to \infty $. In particular, we have that $ (-\Delta)_R^{s/2}(\psi) $ (set to be zero in $ B_R^c $) converges strongly in $ \LL^2(\mathbb{R}^d) $ to $ (-\Delta)^{s/2}(\psi) $, so that the aforementioned weak convergence holds as a consequence of \eqref{eq: def-weak-dom-2} (with $ h=f_\varepsilon $), \eqref{eq: def-weak-dom-3} and the just established finiteness of the r.h.s.~of \eqref{eq: def-weak-dom-4}. In a similar way one can deduce the weak convergence of $ (-\Delta)^{s/2}(u_R) $ to $ (-\Delta)^{s/2}( \mathsf{I}_{d,s} \ast f_\varepsilon ) $ in $ \LL^2(\mathbb{R}^d) $.
We are therefore allowed to pass to the limit in \eqref{eq: def-weak-dom-2} (with $ h=f_\varepsilon $) as $ R \to \infty $ to get
\begin{equation}\label{eq: def-weak-dom-2-bis}
\int_{\mathbb{R}^d} (-\Delta)^{s/2}(\mathsf{I}_{d,s} \ast f_\varepsilon)(x) \, (-\Delta)^{s/2}(\psi)(x) \, \mathrm{d}x  =  \int_{\mathbb{R}^d} f_\varepsilon(x) \, \psi(x) \, \mathrm{d}x \quad \forall \psi \in \HH^s_c(\mathbb{R}^d) \, .
\end{equation}
By letting $ \varepsilon \to 0 $ in \eqref{eq: def-weak-dom-4} we infer that $ (-\Delta)^{s/2}(\mathsf{I}_{d,s} \ast f_\varepsilon) $ converges weakly in $ \LL^2(\mathbb{R}^d) $ to $ (-\Delta)^{s/2}(\mathsf{I}_{d,s} \ast f) $. Indeed, Theorem \ref{lem: lemma-moll} and the identity $ \mathsf{I}_{d,s} \ast f_\varepsilon = (\mathsf{I}_{d,s} \ast f)_\varepsilon $ ensure that 
$$ \lim_{\varepsilon \to 0} \left( \| f_\varepsilon - f \|_{p^\prime,\rho^\prime} + \| \mathsf{I}_{d,s} \ast f_\varepsilon - \mathsf{I}_{d,s} \ast f \|_{p,\rho} \right) = 0 \, . $$ 
Hence, by letting $ \varepsilon \to 0 $ in \eqref{eq: def-weak-dom-2-bis}, we end up with
\begin{equation}\label{eq: def-weak-dom-2-ter}
\int_{\mathbb{R}^d} (-\Delta)^{s/2}(\mathsf{I}_{d,s} \ast f)(x) \, (-\Delta)^{s/2}(\psi)(x) \, \mathrm{d}x  =  \int_{\mathbb{R}^d} f(x) \, \psi(x) \, \mathrm{d}x \quad \forall \psi \in \HH^s_c(\mathbb{R}^d) \cap \LL^\infty(\mathbb{R}^d) \, .
\end{equation} 
We can then plug $ \psi = u_R $ (set to be zero outside $ B_R $) in \eqref{eq: def-weak-dom-2-ter} and let $ R \to \infty $ to obtain
\begin{equation}\label{eq: def-weak-dom-2-quater}
\int_{\mathbb{R}^d} (-\Delta)^{s/2}(\mathsf{I}_{d,s} \ast f)(x) \, (-\Delta)^{s/2}(\mathsf{I}_{d,s} \ast f_\varepsilon)(x) \, \mathrm{d}x  =  \int_{\mathbb{R}^d} f(x) \, (\mathsf{I}_{d,s} \ast f_\varepsilon)(x) \, \mathrm{d}x \, ,
\end{equation} 
so that \eqref{eq: def-weak-dom-1} finally follows by passing to the limit in \eqref{eq: def-weak-dom-2-quater} as $ \varepsilon \to 0 $. The fact that $ \mathsf{I}_{d,s} \ast f \in \dot{\HH}^s(\mathbb{R}^d) $ is just a consequence of the method of proof. Note that, in order to establish the validity of the integration-by-parts formulas \eqref{eq: parti-intro}, it is enough to use \eqref{eq: def-weak-dom-1} with $ f=(-\Delta)^s(v) \pm (-\Delta)^s(w) $.

\smallskip

It remains to prove that, in the case where $ \gamma $ is larger than the spatial dimension $d$, formulas \eqref{eq: parti-intro} always fail. This is due to the presence of nontrivial constants in the space $ X_{p,s,\rho} $ (note that $ \rho $ is in $ \LL^1(\mathbb{R}^d) $). In fact, since $ (-\Delta)^s(1) \equiv 0 $, should \eqref{eq: parti-intro} hold then
\begin{equation}\label{eq: parti-fail}
\int_{\mathbb{R}^d} (-\Delta)^s(v)(x) \, \mathrm{d}x  = 0 \quad \forall v \in X_{p,s,\rho}  \, .
\end{equation}
Given any \emph{nonnegative}, nontrivial function $ \phi \in \mathcal{D}(\mathbb{R}^d) $, let us consider its Riesz potential $ \Phi = \mathsf{I}_{d,s} \ast \phi $. Thanks to Lemma \ref{lem: decay-pot}, it is plain that $ \Phi \in \LL^p_{\rho}(\mathbb{R}^d) $. Moreover, it is apparent that $ (-\Delta)^s(\Phi)=\phi \in \LL^{p^\prime}_{\rho^\prime}(\mathbb{R}^d) $. Hence, $ \Phi \in X_{p,s,\rho} $ and we can plug $ v=\Phi $ in \eqref{eq: parti-fail} to get
\begin{equation*}\label{eq: parti-fail-2}
\int_{\mathbb{R}^d} \phi(x) \, \mathrm{d}x  = 0 \, ,
\end{equation*}
which is absurd unless $ \phi\equiv 0 $.
\end{proof}

Finally, the assertions of Theorem \ref{thm: global-self-adj} for $ d>2s $ and $ \gamma>2s $ are direct consequences of the results just proved and of the method of proof of Theorem \ref{thm: global-self-adj} itself for subcritical-critical powers (see Section \ref{sect: SAIP}).

\bibliographystyle{plainnat}

\end{document}